\documentclass{amsart}

\usepackage[all]{xy}
\usepackage{amsmath}
\usepackage{hyperref}
\usepackage{cleveref}
\usepackage{amsfonts,graphics,amsthm,amsfonts,amscd,latexsym}
\usepackage{epsfig}
\usepackage{flafter}
\usepackage{mathtools}
\usepackage{comment}
\usepackage{stmaryrd}
\usepackage{tabularx}
\usepackage{ytableau}
\usepackage{subcaption}
\usepackage{pst-node}
\usepackage{auto-pst-pdf}
\usepackage{tikz-cd}
\hypersetup{
    colorlinks=true,    
    linkcolor=blue,          
    citecolor=blue,      
    filecolor=blue,      
    urlcolor=blue           
}
\usepackage{pgfplots}
\pgfplotsset{compat=1.15}
\usepackage{mathrsfs}
\usepackage{tikz}
\usetikzlibrary{graphs,positioning,arrows,shapes.misc,decorations.pathmorphing}

\tikzset{
    >=stealth,
    every picture/.style={thick},
    graphs/every graph/.style={empty nodes},
}

\tikzstyle{vertex}=[
    draw,
    circle,
    fill=black,
    inner sep=1pt,
    minimum width=5pt,
]
\usepackage{amssymb}
\usepackage{color}

\setlength{\textwidth}{\paperwidth}
\addtolength{\textwidth}{-2in}
\calclayout

\usetikzlibrary{decorations.pathmorphing}
\tikzstyle{printersafe}=[decoration={snake,amplitude=0pt}]

\newcommand{\Aut}{\operatorname{Aut}}

\newcommand{\Cl}{\operatorname{Cl}}

\newcommand{\rank}{\operatorname{rank}}

\newcommand{\Hom}{\operatorname{Hom}}

\newcommand{\Bir}{\operatorname{Bir}}

\newcommand{\PGL}{\mathbb{P}{\rm GL}}
\newcommand{\Cr}{\operatorname{Cr}}
\newcommand{\coreg}{\operatorname{coreg}}

\newcommand{\pp}{\mathbb{P}}

\newcommand{\qq}{\mathbb{Q}}
\newcommand{\zz}{\mathbb{Z}}

\newcommand{\cc}{\mathbb{C}}

\DeclareMathOperator{\mld}{mld}

\usepackage{tikz}
\usetikzlibrary{matrix,arrows,decorations.pathmorphing}
\usetikzlibrary{arrows}

  \newtheorem{introthm}{Theorem}

  \newtheorem{theorem}{Theorem}[section]
  \newtheorem{lemma}[theorem]{Lemma}
  \newtheorem{proposition}[theorem]{Proposition}

  \newtheorem{notation}[theorem]{Notation}
  \newtheorem{definition}[theorem]{Definition}
  \newtheorem{example}[theorem]{Example}

  \newtheorem{question}[theorem]{Question}

\theoremstyle{definition}

\newtheorem{remark}[theorem]{Remark}

\usepackage[margin=1in]{geometry} 
\usepackage{amsmath,amssymb,mathtools, mathrsfs,esint,tikz-cd,verbatim}
\newcommand{\R}{{\mathbb R}}

\newcommand{\Z}{{\mathbb Z}}
\newcommand{\Q}{{\mathbb Q}}

\newcommand{\C}{{\mathbb C}}

\newcommand{\on}[1]{\operatorname{#1}}

\newcommand{\GL}{{\on{GL}}}

\newcommand{\SO}{{\on{SO}}}

\newcommand{\defi}[1]{\textsf{#1}} 

\theoremstyle{remark}

\numberwithin{equation}{section}

\keywords{Fano varieties, symmetric groups, weighted complete intersections, toric varieties}

\subjclass[2020]{Primary: 14J45, 14J50; Secondary: 14E07, 14M10}

\thanks{During the preparation of this article, L.E. was partially supported by NSF grant DMS-2054553, and L.J. was partially supported by NSF MSPRF grant DMS-2202444.}

\begin{document}

\title[Symmetries of Fano varieties]{Symmetries of Fano varieties}

\author[L.~Esser]{Louis Esser}
\address{Department of Mathematics, Princeton University, Fine Hall, Washington Road, Princeton, NJ 08544-1000, USA
}
\email{esserl@math.princeton.edu}

\author[L.~Ji]{Lena Ji}
\address{Department of Mathematics, University of Michigan, 530 Church Street, Ann Arbor, MI 48109-1043}
\email{lenaji.math@gmail.com}

\author[J.~Moraga]{Joaqu\'in Moraga}
\address{UCLA Mathematics Department, Box 951555, Los Angeles, CA 90095-1555, USA
}
\email{jmoraga@math.ucla.edu}

	\maketitle
	
\begin{abstract}
We study Fano varieties endowed with a faithful action of a symmetric group, as well as analogous results for Calabi--Yau varieties, and log terminal singularities.
We show the existence of a constant $m(n)$, 
so that every symmetric group $S_k$ acting on an $n$-dimensional Fano variety satisfies $k \leq m(n)$.
We prove that $m(n)> n+\sqrt{2n}$ for every $n$.
On the other hand, we show that $\lim_{n\to \infty} m(n)/(n+1)^2 \leq 1$. However, this asymptotic upper bound is not expected to be sharp.

We obtain sharp bounds for certain classes of varieties. For toric varieties, we show that \(m(n)=n+2\) for \(n\geq 4\). For Fano quasismooth weighted complete intersections, we prove the asymptotic equality $\lim_{n\to \infty} m(n)/(n+1)=1$. Among the Fano weighted complete intersections, we study the maximally symmetric ones and show that they are closely related
to the {\em Fano--Fermat varieties}, i.e.,
Fano complete intersections in \(\pp^N\)
cut out by Fermat hypersurfaces.

Finally, we draw a connection between maximally symmetric Fano varieties 
and boundedness of Fano varieties.
For instance, we show that the class of $S_8$-equivariant Fano $4$-folds forms a bounded family. In contrast, the \(S_7\)-equivariant Fano \(4\)-folds are unbounded.
\end{abstract}

\setcounter{tocdepth}{1}
\tableofcontents

\section{Introduction}

In this paper, we study automorphisms of Fano varieties. The automorphism group of a Fano variety satisfies the so-called {\em Jordan property}.
This property states that any finite subgroup of the automorphism group contains a normal abelian subgroup of bounded index. Moreover, for \(n\)-dimensional Fano varieties, there is a uniform upper bound for this index that only depends on \(n\) (\cite[Theorem 1.8]{PS16} and \cite[Corollary 1.5]{Bir21}).
In particular, if a finite semi-simple group
acts on a $n$-dimensional Fano variety,
its order is bounded above in terms of $n$.
The symmetric groups $S_n$, for $n\geq 5$, are very natural examples of semi-simple groups.

In dimension \(1\), any symmetric group
acts on some curve of general type; however,
the symmetric actions
on elliptic 
and rational curves are much more limited. 
For instance, 
$S_4$ is the largest symmetric group acting on a rational curve. 
In higher dimensions, the Jordan property gives a (non-explicit) upper bound for the order of symmetric groups acting on $n$-dimensional Fano varieties.
Similar behavior is expected in the case
of Calabi--Yau varieties (see, e.g.,~\cite[Conjecture 4.46]{Mor22}).
However, in neither of these cases do we understand
how to control the size of the symmetric group
in terms of the dimension of the variety
endowed with the action.

As a first na\"{i}ve example, one can consider symmetric actions on projective spaces $\pp^n$.
Although $S_4$ acts on $\pp^1$ and $S_6$ acts on $\pp^3$, in most dimensions the largest symmetric group that acts faithfully on $\pp^n$ is $S_{n+2}$, via the standard representation of that group in $\GL_{n+1}(\C)$.  In fact, $S_{n+2}$ is the largest symmetric group inside $\Aut(\pp^n) = \PGL_{n+1}(\C)$ for $n = 2$ and $n \geq 4$ (cf. \Cref{tab:S_k_reps}).
However, this example is not optimal, even among rational varieties:
in each dimension $n$ there exists a smooth rational Fano variety that admits an $S_{n+3}$-action (\Cref{ex:rat-n+3}).
Moreover, there exist (conjecturally irrational) Fano varieties with even larger symmetric actions (see \Cref{sec:ex}).
On the other hand, based on work by J. Xu~\cite{Xu-p-groups}, we give a first asymptotic upper bound
for the order of symmetric groups
acting on $n$-dimensional Fano varieties (Fano varieties in this paper
have klt singularities, by definition).

\begin{introthm}\label{introthm:quadratic-bound-Fano}
Let $S_{m(n)}$ be the largest symmetric
group acting faithfully on an $n$-dimensional Fano variety.
Then, we have that 
\[
\lim_{n\to \infty} \frac{m(n)}{(n+1)^2} \leq 1.
\]
\end{introthm}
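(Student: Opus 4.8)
The plan is to reduce the problem to a rank bound for $p$-group actions on rationally connected varieties, applied to Sylow subgroups of $S_k$ for a carefully chosen prime $p \approx \sqrt{k}$. Since klt Fano varieties are rationally connected, a faithful $S_k$-action on an $n$-dimensional Fano variety $X$ yields an embedding $S_k \hookrightarrow \Aut(X) \subseteq \Bir(X)$ with $X$ rationally connected of dimension $n$; thus it suffices to bound $k$ for faithful $S_k$-actions inside $\Bir(X)$.

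The key group-theoretic observation is that the Sylow $p$-subgroups of $S_k$ become elementary abelian once $p$ is moderately large. Concretely, for any prime $p$ with $\sqrt{k} < p \le k$ one has $p^2 > k$, so the $p$-adic valuation of $k!$ equals $\lfloor k/p \rfloor$, and a Sylow $p$-subgroup $P \le S_k$ is generated by $\lfloor k/p\rfloor$ disjoint $p$-cycles. Hence $P \cong (\Z/p)^{\lfloor k/p\rfloor}$ is elementary abelian, and $P$ acts faithfully on $X$.

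The analytic input I would invoke is Xu's bound on the rank of $p$-group actions on rationally connected varieties: for $p$ large relative to $n$, any elementary abelian $p$-subgroup of $\Bir(X)$ has rank at most $n+1$. Applying this to $P$ gives $\lfloor k/p\rfloor \le n+1$, i.e. $k < (n+2)\,p$. It then remains to choose $p$ as small as possible subject to $p > \sqrt{k}$. If $k \le (n+1)^2$ the desired bound is immediate, so assume $k > (n+1)^2$; then $\sqrt{k} > n+1$, which forces $p > n+1$ and so places $p$ safely in the range where the rank bound applies. By the prime number theorem, for every $\delta > 0$ and all sufficiently large $k$ there is a prime $p$ with $\sqrt{k} < p \le (1+\delta)\sqrt{k}$; combining with $k < (n+2)p$ yields $\sqrt{k} < (n+2)(1+\delta)$, hence $k < (n+2)^2(1+\delta)^2$. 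Letting $n \to \infty$ (so that $k = m(n) \to \infty$ and the prime estimate applies) and then $\delta \to 0$ gives $\limsup_{n\to\infty} m(n)/(n+1)^2 \le 1$.

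The main obstacle is entirely contained in the rank bound: to obtain the constant $1$ in the statement, the bound on the rank of elementary abelian $p$-subgroups must have leading term exactly $n$ (a bound of shape $c\,n$ would only give $\limsup \le c^2$), and the threshold beyond which it holds must be compatible with the choice $p \approx \sqrt{k} \approx n+1$. Note that for small primes, e.g. $p = 2$, elementary abelian $p$-subgroups of $\Bir(X)$ can have rank far exceeding $n$, so it is essential that $p$ grows like $\sqrt{k}$; this is precisely what makes the optimization over the prime $p$ the crucial step. The remaining ingredient, placing a prime just above $\sqrt{k}$, is routine via the prime number theorem.
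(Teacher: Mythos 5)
Your argument is correct, and it rests on the same two pillars as the paper's proof (\Cref{prop:ln-bound}): Xu's theorem on finite $p$-subgroups of $\Bir(X)$ for $X$ rationally connected in characteristic $0$, applied to a Sylow $p$-subgroup of $S_k$, plus a prime-gap estimate. But your optimization over the prime is genuinely different. The paper fixes $p$ relative to $n$: taking $p_n$ the smallest prime greater than $n+1$, it shows that if $k \geq p_n(n+1)$ then a Sylow $p_n$-subgroup of $S_k$ contains either $(\Z/p_n)^{\oplus (n+1)}$ or a non-abelian iterated wreath product $W_{p_n}(i)$, $i \geq 2$, as a direct factor; both possibilities are killed by the full strength of Xu's theorem (for $p > n+1$, finite $p$-subgroups of $\Bir(X)$ are \emph{abelian} of rank at most $n$), and Dusart's explicit estimate $p_n \leq \bigl(1 + \tfrac{1}{5000\ln^2(n+1)}\bigr)(n+1)$ then yields the explicit bound $m(n) < p_n(n+1)$ for every $n$. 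You instead fix $p$ relative to $k$: choosing $p$ just above $\sqrt{k}$ forces $p^2 > k$, so by Legendre's formula the Sylow $p$-subgroup is automatically elementary abelian of rank $\lfloor k/p \rfloor$, and you only ever need the rank half of Xu's theorem, never the exclusion of non-abelian $p$-groups or any structure theory of wreath products; the price is that the prime must be located near $\sqrt{k}$ (via the prime number theorem) rather than near $n+1$, and as written you get only the asymptotic statement, whereas the paper's route produces a clean explicit bound valid for all $n$. Two verification points, neither a gap: your case split $k > (n+1)^2$ is exactly what guarantees $p > n+1$, which is Xu's actual threshold, so the ``$p$ large relative to $n$'' hypothesis is genuinely met; and Xu in fact bounds the rank by $n$ rather than $n+1$, so your invocation is conservative --- using the sharper bound would replace your $(n+2)^2(1+\delta)^2$ by $(n+1)^2(1+\delta)^2$, though the limit is $1$ either way.
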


By means of global-to-local techniques, we show that the previous statement admits an analog for klt singularities. 

\begin{introthm}\label{introthm:quadratic-bound-klt}
Let $S_{\ell(n)}$ be the largest symmetric group acting faithfully on an $n$-dimensional klt singularity.
Then, we have that 
\[
\lim_{n\to \infty} \frac{\ell(n)}{n^2}\leq 1.
\]
\end{introthm}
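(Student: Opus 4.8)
The plan is to deduce the local statement from the global bound of \Cref{introthm:quadratic-bound-Fano} by passing to an equivariant Kollár component, a construction that lowers the dimension by one and trades the $n$-dimensional klt singularity for an $(n-1)$-dimensional log Fano pair. So suppose $(X \ni x)$ is an $n$-dimensional klt singularity admitting a faithful action of $G = S_{\ell(n)}$. By definition such an action fixes the closed point $x$, so $G \subseteq \Aut(X \ni x)$. Since $m(n-1) \leq (1+o(1))\,((n-1)+1)^2 = (1+o(1))\,n^2$, a bound of the shape $\ell(n) \leq m(n-1) + o(n^2)$ will give exactly $\lim_{n\to\infty}\ell(n)/n^2 \leq 1$; the whole proof is organized around producing such an inequality.

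First I would extract a $G$-equivariant Kollár component. One way is to run a $G$-equivariant minimal model program to produce a plt blow-up; alternatively, one uses that the minimizer of the normalized volume $\widehat{\vol}$ of $(X \ni x)$ is canonically attached to the germ, hence invariant under every automorphism of the germ, so the associated Kollár component (via the stable degeneration theorem of Li, Xu, and Zhuang) is automatically $G$-invariant. Either route yields a projective birational morphism $Y \to X$ with irreducible exceptional divisor $E$ such that $(E, \Delta_E)$ is a klt log Fano pair of dimension $n-1$, equipped with an induced $G$-action.

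The second step is to check that the induced action on $(E, \Delta_E)$ remains faithful. Let $K \trianglelefteq G$ be its kernel. For $n$ large we have $\ell(n) \geq 5$, so the only normal subgroups of $S_{\ell(n)}$ are $1$, $A_{\ell(n)}$, and $S_{\ell(n)}$; thus if $K \neq 1$ then $A_{\ell(n)} \subseteq K$ fixes $E$ pointwise. An automorphism fixing the prime divisor $E$ pointwise acts on the normal bundle $N_{E/Y}$ through a global scalar, i.e.\ via a character valued in $\C^{\ast}$, and $A_{\ell(n)}$ is perfect, so this character is trivial. Hence at a general (smooth) point of $E$ each $\sigma \in A_{\ell(n)}$ is the identity to first order along $E$ and along the normal direction, so $\sigma$ is the identity on a dense open subset and therefore on all of $Y$; this forces $A_{\ell(n)}$ to act trivially on $X$, contradicting faithfulness. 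Thus $K = 1$ and $G$ acts faithfully on the $(n-1)$-dimensional log Fano pair $(E, \Delta_E)$.

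Finally I would invoke \Cref{introthm:quadratic-bound-Fano}. Its bound comes from rank estimates for finite $p$-subgroups of automorphism groups of rationally connected varieties, which depend only on the underlying variety and not on a boundary; since $E$ is of Fano type, hence rationally connected, of dimension $n-1$, the same estimate applies to $G \subseteq \Aut(E, \Delta_E) \subseteq \Aut(E)$, yielding $\ell(n) \leq (1+o(1))\,n^2$ and hence the claim. I expect the main obstacle to be precisely the two issues addressed above: guaranteeing that the extracted component can be taken $G$-equivariant, and that the symmetric group does not degenerate upon restriction to it. The first is controlled by the canonicity of the $\widehat{\vol}$-minimizer (or by equivariance of the MMP), and the second by the simplicity and perfectness of $A_k$; the remaining point, that \Cref{introthm:quadratic-bound-Fano} is robust enough to cover log Fano pairs with the same asymptotics, holds because its proof only uses rational connectedness of the ambient variety.
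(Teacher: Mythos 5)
Your proposal is correct and its skeleton is the same as the paper's: reduce to dimension $n-1$ by extracting an $S_k$-equivariant Kollár component (plt blow-up), verify that the $S_k$-action on the exceptional divisor remains faithful, and apply the quadratic bound, which, as you note, is really a statement about birational actions on rationally connected varieties (\Cref{prop:ln-bound}) and so covers the Fano-type exceptional divisor.

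The implementation of the two technical points differs, and one of your two suggested equivariance mechanisms has a genuine gap. The paper obtains equivariance without any equivariant MMP or canonicity argument: it first forms the quotient $Y = X/S_k$ with boundary $B_Y$, applies Xu's non-equivariant plt blow-up lemma to the klt pair $(Y,B_Y;y)$, and pulls back along the quotient map; the reduced preimage $F$ of the extracted divisor is then automatically $S_k$-invariant, and it is prime by connectedness of log canonical centers. Your first route, a $G$-equivariant MMP producing the plt blow-up, is legitimate (this is standard for finite group actions in characteristic zero), but your alternative via the normalized volume minimizer does not work as stated: the minimizer is in general only a quasi-monomial valuation, not a divisorial one, so neither the minimizer nor the stable degeneration theorem directly hands you a Kollár component of $(X;x)$; you should rely on the equivariant MMP or on the paper's quotient-then-pullback trick. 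For faithfulness, the paper invokes \Cref{lem:normal+cyclic} (a finite group fixing a prime divisor pointwise is cyclic, and $S_k$ with $k\geq 5$ has no nontrivial cyclic normal subgroup), whereas you argue directly that the pointwise stabilizer acts on the normal bundle of the exceptional divisor through a $\cc^*$-valued character, that perfectness of $A_k$ kills this character, and that a finite-order automorphism with trivial differential at a fixed smooth point is the identity; this is a correct, self-contained substitute, with the last step resting on linearizability of finite-order automorphisms in characteristic zero.
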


We emphasize that \Cref{introthm:quadratic-bound-Fano} is also expected to hold for Calabi--Yau varieties. However, \Cref{introthm:quadratic-bound-klt} does not hold for log canonical singularities.
Indeed, every symmetric group acts on some $3$-dimensional log canonical singularity (see, e.g.,~\cite[Theorem 6]{FM23}).
Neither \Cref{introthm:quadratic-bound-Fano}
nor \Cref{introthm:quadratic-bound-klt}
is expected to be sharp.

\subsection{Weighted complete intersections and toric varieties}
For more restrictive classes of varieties,
we can prove sharp bounds on symmetric
actions in every dimension.
First, we find the largest symmetric action 
on a simplicial toric variety (not 
necessarily Fano) in every dimension.

\begin{introthm}[c.f.~\Cref{thm:toric}]\label{introthm:toric}
Let \(X\) be a complete simplicial toric variety of dimension \(n\). Suppose that the symmetric group \(S_k\) acts faithfully on \(X\).  If $n = 1,2,$ or $3$, then $k \leq n+3$; if $n \geq 4$, then $k \leq n+2$.

These bounds are sharp for each $n$.  If equality is achieved and $n \neq 2,4$, then $X \cong \pp^n$. If $n = 2$, then $k = 5$ if and only if $X \cong \pp^1 \times \pp^1$.  If $n = 4$, then $k = 6$ if and only if $X \cong \pp^4$ or $X \cong \pp^2 \times \pp^2$.
\end{introthm}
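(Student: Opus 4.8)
The plan is to convert the geometric statement into a purely group-theoretic one about finite subgroups of $\Aut(X)$, and then into the representation theory of $S_k$ and its covering groups. The starting point is the Cox--Demazure description of the automorphism group of a complete simplicial toric variety $X = X_\Sigma$: it is a linear algebraic group sitting in an extension
\[
1 \to \Aut(X)^0 \to \Aut(X) \to \Gamma \to 1,
\]
where $\Gamma$ embeds into the permutation group of the ray set $\Sigma(1)$ (so $\Gamma \hookrightarrow S_d$ with $d = |\Sigma(1)| \ge n+1$), and $\Aut(X)^0 = R_u \rtimes L$ with $L$ reductive of rank $n$, the big torus $T$ being a maximal torus. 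The decisive geometric input is that the root system of $L$ is a disjoint union of type-$A$ systems: the semisimple part is a product $L^{\mathrm{ss}} \cong \prod_i \PGL_{a_i+1}(\cc)$ indexed by the blocks of rays carrying Demazure roots, with $\sum_i a_i = \rank L^{\mathrm{ss}} \le n$. Since $S_k$ is finite, after conjugation I may assume $S_k$ lies in a maximal reductive subgroup $L'$ of $\Aut(X)$, with $(L')^0 = L$ and $\pi_0(L') \hookrightarrow \Gamma$.

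For the upper bound I would set $K := S_k \cap L \trianglelefteq S_k$ and use that for $k \ge 5$ the only normal subgroups of $S_k$ are $1, A_k, S_k$. If $K = 1$, then $S_k \hookrightarrow \Gamma \hookrightarrow \GL_n(\zz)$, so $n$ is at least the minimal faithful rational dimension $k-1$ of $S_k$, giving $k \le n+1$. If $K \supseteq A_k$, then since $R_u$ is unipotent and $A_k$ is perfect, $A_k$ injects into $L^{\mathrm{ss}}$, and simplicity forces an injection $A_k \hookrightarrow \PGL_{a_j+1}$ into a single factor with $a_j \le n$; when $K = S_k$ the same holds for $S_k$. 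Thus the relevant group admits a faithful projective representation of dimension $a_j+1 \le n+1$, and I invoke the minimal faithful projective dimensions: these equal $k-1$ for $S_k$ with $k \ge 7$ and for $A_k$ with $k \ge 8$, the only exceptions being $S_4,S_5,S_6 \mapsto 2,4,4$ and $A_5,A_6,A_7 \mapsto 2,3,4$ (the last three arising from the covers $2.A_5$, $3.A_6$, $2.A_7$, and $S_6$ from the spin cover $2.S_6$). Feeding these into $a_j+1 \le n+1$ yields $k \le n+2$ for $n \ge 4$ and $k \le n+3$ for $n \le 3$, except that the single sporadic embedding $A_7 \hookrightarrow \PGL_4$ must be handled by hand in dimension $4$: upgrading it to $S_7$ would need either $S_7 \hookrightarrow \PGL_4$ (impossible, since $S_7$ has minimal projective dimension $6$) or two swapped rank-$3$ factors (impossible in rank $4$), so $k=7$ is excluded when $n=4$.

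For sharpness I would produce explicit actions realizing each bound. The standard $(n{+}1)$-dimensional representation gives $S_{n+2} \hookrightarrow \PGL_{n+1} = \Aut(\pp^n)$; the $2$- and $4$-dimensional spin representations give $S_4 \hookrightarrow \PGL_2 = \Aut(\pp^1)$ and $S_6 \hookrightarrow \PGL_4 = \Aut(\pp^3)$; and whenever $A_k$ has two projective representations of dimension $m$ interchanged by its outer automorphism, pairing them across two equal factors realizes $S_k \hookrightarrow (\PGL_m \times \PGL_m) \rtimes \zz/2$. This last construction, applied to $A_5 \hookrightarrow \PGL_2$ and to the Valentiner embedding $A_6 \hookrightarrow \PGL_3$, produces the $S_5$-action on $\pp^1 \times \pp^1$ and the $S_6$-action on $\pp^2 \times \pp^2$. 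To classify equality I would then run the bound backwards, reading off from the forced embedding which configuration of blocks of rays $\Sigma$ must have (a single block filling the full rank gives $\pp^n$, two equal blocks swapped by $\Gamma$ give the product cases).

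The main obstacle is the sharpness and equality analysis, not the upper bound. First, one must prove rigorously that the reductive part of $\Aut(X)$ is of type $A$ only; this is exactly what rules out the otherwise-fatal chain $A_k \hookrightarrow \SO_{k-1}$, which would weaken the bound to $k \lesssim 2n$, and without it the sharp linear bound simply fails. Second, the sporadic small projective representations of $A_5,A_6,A_7$ and of $S_4,S_5,S_6$ must be catalogued together with the precise action of the outer automorphism on each associate pair, because it is this data that decides whether a given $S_k$ can be assembled inside one $\PGL$ factor, across two factors swapped by the component group, or not at all. Matching this group-theoretic bookkeeping against the block combinatorics of $\Sigma$ — and correctly accounting for the sporadic coincidences in dimensions $1,2,3,4$ where a nonstandard projective representation intervenes — is the delicate heart of the argument.
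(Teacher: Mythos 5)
Your architecture is essentially the paper's: Cox's description of $\Aut(X)$ via the graded Cox ring with type-$A$ reductive part, the trichotomy on $K = S_k \cap \Aut^0(X) \in \{1, A_k, S_k\}$ (the paper phrases it as the cokernel of $S_k\cap\Aut^0(X)\to S_k$ being $S_k$, $\zz/2$, or trivial), minimal degrees of projective representations of $S_k$, $A_k$ and their Schur covers, the block combinatorics $\sum_i(d_i-1)\leq n$ with equality forcing a product of projective spaces (\Cref{lem:ray-partition}), and the same sharpness examples ($S_{n+2}$ on $\pp^n$, spin $S_4\subset\PGL_2$, $S_6\subset\PGL_4$, twisted-diagonal $S_5$ on $\pp^1\times\pp^1$ and $S_6$ on $\pp^2\times\pp^2$). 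However, two steps you assert are genuine gaps. First, in the $K=1$ case your claim $S_k\hookrightarrow\Gamma\hookrightarrow\GL_n(\zz)$ is false: $\Gamma\cong\Aut(N,\Delta)/\prod_i S_{\Delta_i}$ is a proper quotient of a subgroup of $\GL_n(\zz)$ and has no natural faithful action on $N$ or on the ray set. The paper's \Cref{lem:S_k-act-fan} repairs exactly this in two nontrivial moves: $\Gamma$ embeds into the product of symmetric groups on the sets $I_m$ of degree-classes of each size $m$; since each $I_m$ with $m\geq 2$ has $|I_m|\leq n$, for $k\geq n+2$ the group $S_k$ must act faithfully on the singleton classes $I_1$, and only because singleton classes have canonical ray representatives does one obtain a faithful \emph{linear} representation on their span $V\subseteq N_{\Q}$ with $\dim V\leq n$, whence $k\leq n+1$.

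Second, the dichotomy you invoke in the $K=A_k$ case --- that upgrading $A_k$ in a single $\PGL$-block to $S_k$ requires either $S_k$ inside that block or two swapped blocks --- is precisely the paper's \Cref{lem:switching_indices}, the most delicate lemma of the whole proof, and you assert it without argument. The lifted transposition lives in the disconnected group $\widetilde{\Aut}(X)$: one must conjugate the finite group into a maximal reductive subgroup $G_s'$ of $\widetilde{\Aut}(X)$ (Levi--Mostow for disconnected groups), factor the lift as $P_\varphi\circ h$ with $\varphi\in\Aut(N,\Delta)$ and $h\in G_s$, use uniqueness of the largest block to see that $\varphi$ preserves it, normalize $\varphi$ by an element of $\prod_i S_{\Delta_i}$ so that it acts trivially on the block, and then manufacture $\tau'\in\Aut^0(X)$ which together with $A_k$ generates a copy of $S_k$ inside $\Aut^0(X)$. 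Without this lemma your equality classification (excluding a lone block carrying $A_5$, $A_6$, or $A_7$ at $n=2,3,4$) is unsupported. Relatedly, the sporadic $A_7\hookrightarrow\PGL_4$ threatens $n=3$ (where the claimed bound is $k\leq 6$) just as much as $n=4$; your mechanism covers it, but you only treat $n=4$.

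Two smaller points. Since $S_k$ is not perfect, ``$K=S_k$ lands in $L^{\mathrm{ss}}$'' needs the (easy) observation that $S_k$ has no nontrivial abelian normal subgroup for $k\geq 5$, so it injects into the adjoint quotient $L/Z(L)$; likewise $L^{\mathrm{ss}}$ is only a central quotient of $\prod_i\mathrm{SL}_{d_i}$, not literally $\prod_i\PGL_{d_i}$, though this is harmless by the same argument. More seriously, your equality dictum ``a single block filling the full rank gives $\pp^n$'' does not address blocks that do \emph{not} fill the rank, which is exactly where the sporadic $S_6\hookrightarrow\PGL_4$ (projectivized basic spin) bites in the trivial-cokernel case at $(n,k)=(4,6)$: a size-$4$ block in a rank-$4$ fan leaves room for further rays, e.g.\ $\Aut^0(\pp^3\times\pp^1)\supseteq\PGL_4\times\PGL_2$ contains this $S_6$, so the $n=4$ classification requires a dedicated argument here. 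Note that the paper's own write-up passes over this configuration quickly (the sentence ``the inequalities are equalities'' in the trivial-cokernel case only literally applies when the bound of \Cref{lem:A_k-Aut0} is attained, which for $k=6$ happens at $n=3$, not $n=4$), so this is a pressure point your ``run the bound backwards'' plan must confront explicitly rather than defer to bookkeeping.
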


For $n \geq 3$, however, toric 
varieties do not have the largest 
symmetric actions among all Fano varieties.
Rather, we expect the optimal examples
to be (quasismooth) weighted complete intersections.
In Sections \ref{sect:wci} and \ref{sect:max_sym_vars},
we prove sharp bounds on symmetric actions
on these varieties in every dimension.
In particular, we prove the following result.

\begin{introthm}[c.f.~\Cref{thm:max_symmetric_action}]\label{introthm-bound-Fano-WCI}
Let $S_{m(n)}$ be the largest symmetric group
acting faithfully on an $n$-dimensional quasismooth Fano weighted complete intersection.
Then, we have that 
\[
\lim_{n\to \infty}\frac{m(n)}{n+1}=1. 
\]
\end{introthm}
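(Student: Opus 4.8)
The plan is to establish the two inequalities $\liminf_{n\to\infty} m(n)/(n+1)\ge 1$ and $\limsup_{n\to\infty} m(n)/(n+1)\le 1$ separately. The lower bound is immediate: since $\pp^n$ is itself a smooth (hence quasismooth) Fano weighted complete intersection of codimension $0$ carrying the faithful $S_{n+2}$-action through the standard representation, we have $m(n)\ge n+2$, so $\liminf\ge 1$. One can do slightly better with the Fano--Fermat examples, which realize $m(n)\ge n+\sqrt{2n}$, but this is not needed for the limit. All of the content is therefore in the upper bound, for which I would show that any faithful $S_k$-action on an $n$-dimensional quasismooth Fano weighted complete intersection forces $k\le n+O(\sqrt n)$.

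First I would reduce the group action to a graded linear action on the ambient weighted projective space. Write $X=X_{d_1,\dots,d_c}\subset \pp(a_0,\dots,a_N)$ with $\dim X=n=N-c$. For $k$ large a faithful $S_k$-action cannot factor through a small finite quotient, and using the description of the automorphism group of a quasismooth Fano weighted complete intersection one reduces (after discarding finitely many low-dimensional exceptional cases) to the situation where $S_k$ acts linearly on $\C^{N+1}$ preserving the grading. The variables then decompose as an $S_k$-representation $\bigoplus_{w} V_w$, graded by weight, and the defining ideal is $S_k$-stable, so in each degree the span of the equations is an $S_k$-subrepresentation of the corresponding graded piece.

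The upper bound then rests on two points. The first is a representation-theoretic lower bound on the number of variables: since $\bigoplus_w V_w$ is faithful and, for $k\ge 5$, the only irreducible representations of $S_k$ of dimension $<k-1$ are the trivial and sign representations (both trivial on $A_k$), faithfulness forces a constituent of dimension at least $k-1$, whence $N+1\ge k-1$, i.e.\ $k\le n+c+2$. The second, and main, point is to bound the codimension $c$. I would argue that the defining equations must transform in \emph{small} representations: an irreducible constituent among the equations of dimension $\ge k-1$ would contribute $\ge k-1$ members to a regular sequence of total degree at least $2(k-1)$, which I expect to be incompatible with the Fano inequality $\sum_j d_j<\sum_i a_i$ once the weights are controlled, leaving only the trivial and sign isotypic parts. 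The sign (anti-invariant) forms on the standard representation are divisible by the discriminant $\prod_{i<j}(x_i-x_j)$ of degree $\binom{k}{2}$, which far exceeds the Fano bound, so the equations must in fact be $S_k$-\emph{invariant}. Since the invariant ring of $S_k$ on the standard representation is the polynomial ring on the power sums $p_2,p_3,\dots,p_k$ of distinct degrees $2,3,\dots,k$, a codimension-$c$ invariant complete intersection has $\sum_j d_j\ge 2+3+\cdots+(c+1)$, and feeding this into $\sum_j d_j\le N=n+c$ gives $\tfrac{c(c+1)}{2}\le n$, hence $c=O(\sqrt n)$.

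Combining $k\le n+c+2$ with $c=O(\sqrt n)$ yields $k\le n+O(\sqrt n)$, so $\limsup_{n\to\infty} m(n)/(n+1)\le 1$, which together with the lower bound proves the limit; the extremal analysis moreover identifies the maximally symmetric examples as the Fano--Fermat complete intersections cut out by power-sum hypersurfaces. I anticipate two main obstacles. First, the reduction to a graded \emph{linear} action, together with control of the finitely many exceptional automorphism groups in low dimension, must be made precise; this is where quasismoothness does the essential work. Second, the codimension bound above was run cleanly only in the nearly extremal regime in which the variables form (a sign-twist of) the standard representation in a single weight, and in full generality one must repeat the argument when $\bigoplus_w V_w$ carries several weights and additional small constituents, showing that the extra variables and the possibility of higher-degree or non-invariant equations cannot lower the total weight enough to evade the Fano inequality and inflate $c$ beyond $O(\sqrt n)$. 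Handling this general representation-theoretic bookkeeping, rather than any single hard estimate, is the crux of the proof.
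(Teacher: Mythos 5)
Your strategy is essentially the paper's (the proof of \Cref{thm:max_symmetric_action} in \Cref{sect:wci}): lift the action to a graded linear action on the ambient space, use the representation theory of symmetric groups to force a standard representation of dimension $k-1$ among variables of a single weight, show the defining equations lie in the invariant ideal, and then play the quadratic growth of degrees of symmetric invariants against the linear Fano degree condition to get $k \leq n + O(\sqrt{n})$; your extremal computation $\frac{c(c+1)}{2} \leq n$ recovers exactly the paper's bound $c_{\mathrm{Fano}}(n)$ in the equal-weights case, and the sharpness examples (Fermat complete intersections, \Cref{ex:optimal_example}) coincide. However, there are genuine gaps. The most important one you flag yourself: the ``bookkeeping'' with several weights and extra variables is not an afterthought but where most of the technical work lives. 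The paper resolves it by first removing linear terms (\Cref{lem:no-linear-term}) --- without which degree-versus-weight comparisons fail --- and then invoking the Iano--Fletcher inequalities $d_{m-j} > a_{N-j}$ (\Cref{IF_lemma}), which pair each weight outside the standard representation with a distinct defining equation of strictly larger degree; combining this with the symmetric-invariant degree count gives the total-degree bound of \Cref{lem:total-degree}, from which the Fano inequality yields $\frac{(k-n-1)(k-n)}{2} < k$ unconditionally. Your proposal has no mechanism for this step.

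Two further points need repair. First, the lift is not to a linear $S_k$-action: since $\Aut(\pp)$ is a quotient of the graded automorphism group by $\C^*$, one only gets an action of a Schur cover $\tilde{S}_k$, so your dichotomy ``only trivial, sign, or dimension $\geq k-1$'' must be checked against the basic spin representations of $\tilde{S}_k$, of dimension $2^{\lfloor (k-1)/2 \rfloor}$. These are eventually huge, so your asymptotic conclusion survives, but the paper must handle them explicitly (\Cref{sym_group_reps}, including the genuine borderline case $(n,k)=(4,8)$ where an $8$-dimensional faithful spin representation fits and is excluded only by computing the invariant ring of $\tilde{A}_8$). Also, lifting from $\Aut(X)$ to $\Aut(\pp)$ at all requires $n \geq 3$, so the low-dimensional cases need the separate classification input of \Cref{prop:low_dim_wci}. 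Second, your claim that the equations transform in small representations is circular as stated: the paper first proves, with no group theory, that Fano forces $m < (N+1)/2$ (\Cref{lem:codimension_bound}, again via \Cref{IF_lemma}), and only then concludes that the $m \leq n+1 < k-1$ minimal generators of the ideal must be semi-invariant and that $N+1 \leq 2n+2$ is small enough for the representation dichotomy to apply; your alternative (``a large constituent among the equations contradicts the Fano inequality'') is offered only as an expectation. Similarly, your step ``degrees at least $2,3,\ldots,c+1$'' needs the codimension argument of \Cref{lem:large_dim_wci_facts}.(3) (no $\alpha$ of the equations can lie in an ideal generated by $\alpha - 1$ invariants, else $X$ would have too small codimension), where quasismoothness is used a second time to show every equation involves the permuted variables. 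None of these gaps changes the shape of the argument --- they confirm you found the right route --- but as written the proposal proves the theorem only in the single-weight, all-invariant extremal case.
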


Though the asymptotics are the same as in the toric case, $m(n)$ is on the order of $n + \sqrt{2n}$ here. We obtain a precise formula for this $m(n)$ in \Cref{thm:max_symmetric_action}, where we also prove an analogous statement for Calabi--Yau weighted complete intersections.
We expect a similar statement to~\Cref{introthm-bound-Fano-WCI} in the case of weighted complete intersection klt singularities (see~\Cref{quest:klt-WCI}).

An $n$-dimensional Fano variety
is said to be \defi{maximally symmetric}
if it admits the largest symmetric action among $n$-dimensional Fano varieties.
We define 
\defi{maximally symmetric} (quasismooth) Fano weighted complete intersections
similarly. 
Our next aim is to describe 
maximally symmetric
Fano
weighted complete intersections.

The following theorem
gives a characterization of maximally symmetric Fano complete intersections.
In the theorem below, we say that a complete intersection in $\pp^N$ is \defi{totally symmetric} if its defining ideal is contained in the ring of symmetric polynomials in the variables $x_0,\ldots,x_N$.

\begin{introthm}\label{introthm:maximally-symmetric-Fano-WCI}
Let $X$ be a maximally symmetric quasismooth Fano weighted complete intersection of dimension $n \neq 2$ where the maximal action is by $S_k$. Then there is a finite cover $X \rightarrow Y$, where $Y$ is a totally symmetric complete intersection in $\pp^{k-1}$.
\end{introthm}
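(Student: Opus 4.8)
The plan is to combine the explicit classification of maximally symmetric quasismooth Fano weighted complete intersections from \Cref{thm:max_symmetric_action} with the representation theory of $S_k$, in order to exhibit $X \to Y$ as the morphism given by a distinguished block of coordinates. Throughout, I would fix a maximally symmetric $X$ together with its faithful $S_k$-action, so that $k = m(n)$, and read off from \Cref{thm:max_symmetric_action} the weights $a_0, \dots, a_N$ and degrees of $X \subset \pp(a_0, \dots, a_N)$, noting in particular that the number of ambient coordinates $N+1$ is controlled by $n$ while $k$ is of order $n + \sqrt{2n}$.

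The first substantive step is to analyze the $S_k$-representation carried by the homogeneous coordinates $x_0, \dots, x_N$, which is where the constraint $k = m(n)$ does the heavy lifting. Because the number of coordinates is bounded in terms of $n$ while $k$ is large, and because the irreducible representations of $S_k$ have dimension $1$ (the trivial and sign representations), $k-1$ (the standard representation and its twist by sign), and then nothing below roughly $\binom{k-1}{2}$, the coordinate representation must be a sum of copies of these low-dimensional irreducibles. Since $S_k$ can only permute or rescale coordinates of a common weight, this forces a distinguished block of exactly $k$ coordinates, all of a single weight $w$, on which $S_k$ acts by the permutation representation (equivalently, standard plus a trivial summand). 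I would isolate this block and relabel it $x_0, \dots, x_{k-1}$.

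These $k$ sections of $\mathcal{O}_X(w)$ define a rational map $X \dashrightarrow \pp^{k-1}$ carrying the block to the homogeneous coordinates of $\pp^{k-1}$, on which $S_k$ now acts by permuting coordinates. Quasismoothness of $X$ ensures that the block has no common zero, so this is a genuine morphism $\phi \colon X \to \pp^{k-1}$, and by construction it is $S_k$-equivariant for the coordinate-permutation action on the target. Hence the image $Y = \phi(X)$ is stable under this permutation action and is therefore cut out by symmetric polynomials: this is precisely the assertion that $Y$ is totally symmetric. Identifying the symmetric generators with power sums $\sum_i x_i^d$ then yields the Fano--Fermat description alluded to above.

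It remains to verify that $\phi$ is finite and that $Y$ is a complete intersection of the expected codimension. As $\phi$ is projective, finiteness reduces to a fiber-dimension bound: using the defining equations of the weighted complete intersection $X$, each coordinate outside the block satisfies an integral relation over the subalgebra generated by $x_0, \dots, x_{k-1}$, so the fibers are finite and $\dim Y = \dim X = n$. When there are no outside coordinates, the weighted ambient space is itself isomorphic to $\pp^{k-1}$ and $\phi$ is an isomorphism; otherwise $\phi$ is a nontrivial finite cover. A dimension count then identifies $Y$ as a complete intersection in $\pp^{k-1}$ of the predicted codimension. The principal obstacle is the rigidity established in the second step: one must invoke the sharpness of $k = m(n)$ from \Cref{thm:max_symmetric_action}, together with the wide gap between $k-1$ and the next-smallest irreducible dimension of $S_k$, both to rule out more exotic coordinate representations and to guarantee a single equal-weight block of size exactly $k$. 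The hypothesis $n \neq 2$ is precisely what allows us to bypass the low-dimensional anomalies---such as the $\pp^1 \times \pp^1$ case appearing in \Cref{introthm:toric}---where this rigidity can fail.
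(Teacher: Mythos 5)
Your outline shares the paper's skeleton (representation theory forces an equal-weight block of coordinates carrying the standard representation, then one projects away the remaining coordinates to land in $\pp^{k-1}$), but it is missing the step that actually makes the theorem true, namely the paper's \Cref{prop:zero_or_one_extra_weights}: after the reduction of \Cref{lem:no-linear-term}, a maximally symmetric $X$ sits in $\pp^{k-2}$ or $\pp(1^{(k-1)},a)$, i.e.\ there is \emph{at most one} coordinate outside the block. This is not a formal consequence of representation theory; it is proved by the degree estimates of \Cref{IF_lemma} and \Cref{lem:large_dim_wci_facts}\eqref{part:wci-Sk-total-deg} combined with the numerical maximality of $k=c_{\mathrm{Fano}}(n)$ (the key inequality becomes nonpositive when $k$ is replaced by $k+1$, forcing $a_{k-1}+\cdots+a_N > d_{k-n}+\cdots+d_m$ and then a contradiction if two extra weights exist). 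Your substitute --- that ``each coordinate outside the block satisfies an integral relation over the subalgebra generated by the block'' --- is unjustified: quasismoothness forces a pure power $z^r$ of an outside variable only when there is a \emph{single} such variable (this is exactly how the paper shows the base point of the projection misses $X$ and that $\pi|_X$ is finite); with two or more outside variables, quasismoothness permits mixed monomials such as $x_jz$, no integral relation follows, and both the finiteness of your map and the codimension count identifying $Y$ as a complete intersection collapse. A smaller but real error in the same step: the block consists of $k-1$ coordinates carrying the standard representation, not $k$ coordinates carrying the permutation representation; the $k$-th variable $y=-x_0-\cdots-x_{k-2}$ is adjoined \emph{formally}, together with the linear relation $\sigma_1=0$, to re-embed $X$ in $\pp^{k-1}$. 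Indeed, in the $\pp^{k-2}$ case the ambient space has only $k-1$ coordinates, so your claimed $k$-coordinate permutation block cannot exist.

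There are secondary gaps as well. The lift of the $S_k$-action to coordinates is a priori an action of a Schur cover $\tilde{S}_k$, so one must rule out the basic spin representations; in the genuinely exceptional case $(n,k)=(4,8)$, where $\tilde{S}_8$ has a faithful $8$-dimensional representation, this requires the separate invariant-theoretic computation in \Cref{sym_group_reps} and \Cref{lem:large_dim_wci_facts}. Moreover, the defining equations are $S_k$-invariant only up to sign, so $A_k$-invariance allows the Vandermonde polynomial $V$ to appear; it must be excluded by a degree argument using the Fano condition before total symmetry of the equations follows --- mere stability of the image $Y$ under the permutation action does not imply its ideal lies in the ring of symmetric polynomials, which is what ``totally symmetric'' asserts. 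Finally, your argument silently assumes $n\geq 4$: \Cref{lem:large_dim_wci_facts} does not apply to Fano $X$ with $n=3$, and the paper instead invokes the classification of $S_7$-actions on rationally connected threefolds \cite{Prokhorov_space_Cremona} for that case ($n=1$ being trivial).
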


In the theorem below, the \defi{index} of $-K_X$ refers to the largest positive integer $r$ such that $-K_X$ is divisible by $r$ in the class group $\Cl X$.
A \defi{Fano--Fermat variety} is a Fano complete intersection in the projective space $\pp^N$ that is cut out by Fermat hypersurfaces. 

\begin{introthm}[c.f.~\Cref{thm:maximally_symmetric_largest_index}]
\label{introthm:maximally-symmetric-large-index-Fano-WCI}
Let $X$ be a maximally symmetric quasismooth Fano weighted complete intersection of dimension $n$ with largest possible index of $-K_X$, where the maximal action is by $S_k$.  Then $X$ is $S_k$-equivariantly isomorphic 
to a Fano--Fermat variety.
\end{introthm}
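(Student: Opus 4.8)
The plan is to combine the explicit classification of maximally symmetric quasismooth Fano weighted complete intersections with a rigidity argument that identifies the defining equations with power sums. Recall that for a well-formed quasismooth weighted complete intersection \(X\subset\pp(a_0,\dots,a_N)\) of multidegree \((d_1,\dots,d_c)\) and dimension at least three, one has \(\Cl X\cong\Z\) generated by \(\mathcal{O}_X(1)\) and \(-K_X=\mathcal{O}_X\bigl(\sum_i a_i-\sum_j d_j\bigr)\), so the index of \(-K_X\) equals \(\sum_i a_i-\sum_j d_j\). First I would invoke \Cref{thm:max_symmetric_action} to fix \(k=m(n)\) and to produce the finite list of admissible combinatorial types (weights \((a_i)\) and degrees \((d_j)\)) realizing an \(n\)-dimensional example that carries a faithful \(S_k\)-action. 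Because a faithful representation of \(S_k\) of dimension close to \(k\) must be permutation-type, the action on the ambient space is through the permutation representation, and hence the defining ideal is totally symmetric in the sense of \Cref{introthm:maximally-symmetric-Fano-WCI}.

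Next I would isolate, among the types on this list, the one of largest index \(\sum_i a_i-\sum_j d_j\). Since the indecomposable invariants in each degree are spanned by a single power sum, two generators of equal degree would become proportional modulo products of lower-degree invariants, so the degrees are forced to be distinct; minimizing \(\sum_j d_j\) for a fixed codimension then selects the initial block \(1,2,\dots,c\) of consecutive positive integers, the degree-\(1\) form being the invariant hyperplane \(p_1=\sum_i x_i\) that carries the standard representation. The expectation, consistent with the heuristic \(c\approx\sqrt{2n}\), is that maximizing the index simultaneously forces all weights to equal \(1\), so that the ambient space is ordinary projective space \(\pp^{k-1}\); verifying that no genuinely weighted type attains a larger value of \(\sum_i a_i-\sum_j d_j\) is the delicate point, since there the weights inflate \(\sum_i a_i\) but are constrained by quasismoothness and by the fixed dimension and symmetry. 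Granting this, the unique candidate maximizer is the Fermat complete intersection \(V(p_1,p_2,\dots,p_c)\subset\pp^{k-1}\), where \(p_d=\sum_i x_i^d\).

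It then remains to show that every quasismooth totally symmetric complete intersection \(X\subset\pp^{k-1}\) of multidegree \((1,2,\dots,c)\) is, on the nose, this Fermat variety, which I would prove by induction on the codimension. Write the ideal as \((f_1,\dots,f_c)\) with \(f_j\) an \(S_k\)-invariant form of degree \(j\). Expanding in the power-sum basis of the invariant ring gives \(f_j=\lambda_j p_j+g_j\), where \(g_j\) is a combination of monomials in \(p_1,\dots,p_{j-1}\) that are products of at least two factors; each such factor has degree at most \(j-1\), so \(g_j\in(p_1,\dots,p_{j-1})=(f_1,\dots,f_{j-1})\) by the inductive hypothesis. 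Hence \((f_1,\dots,f_j)=(f_1,\dots,f_{j-1},\lambda_j p_j)\), and the complete intersection hypothesis forces \(\lambda_j\neq0\), for otherwise \(f_j\) would be redundant and \(X\) would have the wrong dimension. Rescaling, the ideal equals \((p_1,p_2,\dots,p_c)\) exactly, exhibiting \(X\) with its permutation \(S_k\)-action as a Fano--Fermat variety and giving the desired equivariant identification.

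I expect the main obstacle to be the second step: proving that maximizing the index rules out all weighted competitors and pins the example to ordinary projective space with the minimal consecutive degrees. The final induction is clean once this reduction is in place, and the representation-theoretic input is standard; by contrast, the index comparison requires extracting from \Cref{thm:max_symmetric_action} precise enough control of how admissible weights and degrees trade off against one another to certify that the straight Fermat intersection strictly dominates. The low-dimensional cases (in particular \(n=2\), excluded from \Cref{introthm:maximally-symmetric-Fano-WCI}, and the dimensions below three where \(\Cl X\) need not be cyclic) would be handled by separate direct inspection.
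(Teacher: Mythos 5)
Your overall route matches the paper's, but the step you yourself flag as the ``delicate point'' --- ruling out genuinely weighted competitors and certifying that the straight Fermat intersection maximizes the index --- is exactly where the paper's proof does its real work, and your proposal leaves it as an unverified expectation (``Granting this\dots''). As written, this is a genuine gap: nothing in your argument bounds the trade-off $\sum_i a_i - \sum_j d_j$ when extra weights $a \geq 2$ are present, and the existence of examples like the double cover in $\pp^9(1^{(9)},2)$ (\Cref{ex:one_extra_weight}) shows that weighted maximally symmetric examples genuinely occur, so they must be excluded quantitatively, not by representation theory alone. The paper closes this gap with \Cref{lem:total-degree}: combining the degree bound of \Cref{lem:large_dim_wci_facts}.(3) for the first $k-n-2$ equations with the Iano--Fletcher inequalities $d_{m-j} > a_{N-j}$ (\Cref{IF_lemma}.(1)), which pair each weight outside the standard-representation block with a distinct defining degree that strictly dominates it. This yields $d \geq a_{k-1} + \cdots + a_N + \bigl(\tfrac{(k-n-1)(k-n)}{2}-1\bigr)b$, hence $i_X \leq \bigl(k - \tfrac{(k-n-1)(k-n)}{2}\bigr)b$ with $b = 1$ by \Cref{prop:zero_or_one_extra_weights}, and --- crucially for the equality case --- equality forces $N = k-2$ and $\pp \cong \pp^{k-2}$, which is precisely the claim you postponed. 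Note also that your statement that \Cref{thm:max_symmetric_action} ``produces the finite list of admissible combinatorial types'' overstates what that theorem gives: it only bounds $k$; the structural control on weights and degrees comes from \Cref{lem:large_dim_wci_facts}, \Cref{lem:total-degree}, and \Cref{prop:zero_or_one_extra_weights}.

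The remainder of your proposal is correct and essentially coincides with the paper's proof of \Cref{thm:maximally_symmetric_largest_index}. Your triangular induction identifying a totally symmetric complete intersection of degrees $(1,2,\ldots,c)$ in $\pp^{k-1}$ with $V(p_1,\ldots,p_c)$ is a cleaner write-up, in the power-sum basis, of the paper's terse assertion that the minimal total degree forces the ideal to be $\langle \sigma_2,\ldots,\sigma_{k-n-1}\rangle$, together with the observation that $\langle \sigma_1,\ldots,\sigma_\alpha\rangle = \langle p_1,\ldots,p_\alpha\rangle$ in characteristic zero; the step $\lambda_j \neq 0$ via the codimension of $X$ is sound. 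Two smaller points to fold in: the defining equations are a priori only $S_k$-invariant \emph{up to sign} (hence $A_k$-invariant), so the Vandermonde polynomial must be excluded --- the paper does this by its degree $\binom{k}{2}b$, which is too large for a Fano --- and the low-dimensional cases are not quite ``direct inspection'': $n = 2$ requires the Dolgachev--Iskovskikh classification of $S_5$-del Pezzo actions (only $\pp^1\times\pp^1$ attains index $2$, and it is the Fermat quadric in $\pp^4$), and $n = 3$ requires Prokhorov's uniqueness of the $S_7$-action on the $(1,2,3)$ Fermat intersection in $\pp^6$.
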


More precisely, we will show that a 
maximally symmetric Fano weighted complete intersection
with maximal index
is isomorphic
to the Fano--Fermat variety in \Cref{ex:optimal_example}.

\subsection{Symmetries and boundedness}
Boundedness of Fano varieties
is an important topic in birational geometry.
Koll\'ar, Miyaoka, and Mori proved the boundedness
of $n$-dimensional smooth Fano varieties~\cite{KMM92}.
Birkar proved the boundedness of $n$-dimensional 
Fano varieties with mininimal log discrepancy bounded away from zero~\cite{Bir21}.
Other constraints on invariants are also known to
give boundedness of $n$-dimensional Fano varieties,
such as bounding the degree and alpha-invariant away from zero~\cite{Jia20}.
In these cases, 
the invariant that defines a bounded family of Fano varieties is a measure of singularities.
We prove a boundedness result in a novel direction---we show that Fano $4$-folds with large symmetric actions form bounded families:

\begin{introthm}[c.f.~\Cref{thm:S8-4-log-pair-bounded}]\label{introthm:S8-4-fold-bounded}
The class of $S_8$-equivariant klt Fano $4$-folds is bounded.
\end{introthm}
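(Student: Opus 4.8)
The plan is to reduce the statement to Birkar's solution of the BAB conjecture \cite{Bir21}: for a fixed dimension and a fixed $\epsilon>0$, the $\epsilon$-log terminal Fano varieties of that dimension form a bounded family. It therefore suffices to produce a uniform constant $\epsilon>0$ such that every $S_8$-equivariant klt Fano $4$-fold is $\epsilon$-log terminal. In other words, the entire content of the theorem is that a faithful $S_8$-action forces a uniform lower bound on the minimal log discrepancy, and the boundedness is then automatic.

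I would argue by contradiction, assuming a sequence of $S_8$-equivariant klt Fano $4$-folds $X_i$ with $\mld(X_i)\to 0$. For each $i$ I would extract a divisor $E_i$ over $X_i$ with log discrepancy $a(E_i)=\mld(X_i)<1$, arranging the extraction $\pi_i\colon X_i'\to X_i$ to be $S_8$-equivariant (so that the center $Z_i\subset X_i$ is invariant under a subgroup $G_i\le S_8$) and plt, so that adjunction $(K_{X_i'}+E_i)|_{E_i}=K_{E_i}+\mathrm{Diff}$ exhibits $E_i$ as a log Fano-type variety over $Z_i$. In every case this yields a rationally connected variety of dimension at most $3$ carrying a faithful action of $G_i$. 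The decisive feature, which I expect to require the quantitative input $\mld(X_i)\to 0$, is that the deeper the singularity, the larger $G_i$ must be: a sufficiently bad $S_8$-orbit of centers cannot be spread out without violating the discrepancy and volume budget imposed by the global Fano condition, so that $G_i$ eventually contains $A_8$.

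The contradiction then comes from a sharp low-dimensional rigidity statement: \emph{no rationally connected threefold admits a faithful action of $A_8$}. This is Prokhorov's classification of finite simple subgroups of the space Cremona group \cite{Pro12}, and it is precisely the dividing line that separates the present case from the unbounded $S_7$-equivariant one. The underlying representation-theoretic reason is that the double cover $2.A_7$ has a $4$-dimensional spin representation, giving $A_7\hookrightarrow\PGL_4$ and hence a faithful action on $\pp^3$, whereas $A_8$ has no faithful projective representation of dimension $\le 6$ (equivalently $A_8\hookrightarrow\PGL_m$ forces $m\ge 7$, so $A_8$ acts faithfully on no $\pp^k$ with $k\le 5$). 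Combined with the global-to-local comparison underlying \Cref{introthm:quadratic-bound-klt} and the low-dimensional bounds of \Cref{introthm:toric}, this rules out the $G_i$-action on the extracted lower-dimensional Fano-type variety.

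The main obstacle is the equivariance bookkeeping in the middle step: guaranteeing that the extremal low-discrepancy valuation can be extracted compatibly with a \emph{large} subgroup of $S_8$, rather than only with the stabilizer of a single member of a large orbit. Concretely, one must bound the size of the $S_8$-orbit of the worst centers $Z_i$ so that $G_i$ still contains a copy of $A_8$, and I expect this to require a careful $S_8$-equivariant minimal model program together with the local klt symmetry estimates of \Cref{introthm:quadratic-bound-klt}. By contrast, the verification that adjunction produces a genuinely klt rationally connected quotient compatible with the $G_i$-action, and the invocation of the dimension $\le 3$ non-existence, should be comparatively routine.
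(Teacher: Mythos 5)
Your reduction to BAB is fine as far as it goes: if one could show that every $S_8$-equivariant klt Fano $4$-fold is $\epsilon$-lc for a uniform $\epsilon>0$, then \cite[Theorem 1.1]{Bir21} would give boundedness. The genuine gap is the middle step, which you yourself flag as the ``equivariance bookkeeping'' but which is in fact the whole theorem: you give no mechanism forcing the stabilizer $G_i$ of the worst center to contain $A_8$ as $\mld(X_i)\to 0$. The appeal to a ``discrepancy and volume budget imposed by the global Fano condition'' is circular --- there is no a priori bound on $(-K_{X_i})^4$ before boundedness is established (indeed in the unbounded $S_7$-equivariant sequence of \Cref{ex:S_7-Fano-4-fold} the anticanonical volume degenerates precisely as $\mld\to 0$). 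Concretely, your argument has nothing to say about a hypothetical sequence in which the deepest singularities form an $S_8$-orbit of, say, $8$ points with stabilizer $S_7$: the plt blowup over one such point yields a rationally connected threefold carrying only a faithful $S_7$-action, and $S_7$ \emph{does} act faithfully on rationally connected threefolds (the symmetric sextic of \Cref{exmp:index-1-Fano-Fermat}, which is exactly the seed of \Cref{ex:S_7-Fano-4-fold}). So the non-existence of $A_8\hookrightarrow\Bir(V)$ for $V$ rationally connected of dimension $3$ never gets triggered, and no contradiction results. Note also that the equivariant plt extraction you want is performed in the paper over the quotient $Y=X/S_8$ at an invariant point (as in the proofs of \Cref{introthm:quadratic-bound-klt} and \Cref{introthm:S8-5-dim-klt}); extracting the full orbit of a non-invariant valuation destroys the plt/prime-divisor structure your adjunction step needs.

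The paper's proof of \Cref{thm:S8-4-log-pair-bounded} avoids any direct mld estimate. It passes to the quotient pair $(Y,B_Y)$ and splits according to $\coreg(Y,B_Y)$: coregularity $4$ means $(Y,B_Y)$ is exceptional, handled by Birkar's boundedness of exceptional pairs \cite[Theorem 1.11]{Bir19}; coregularity $3$ uses bounded complements to produce $\Gamma$ with $N(K_X+\Gamma)\sim 0$ and a zero-dimensional dual complex, where an equivariant MMP analysis either yields a contradiction via \Cref{lem:A8-CY-3fold} or shows $X$ is $\tfrac1N$-lc, and only then invokes BAB; coregularity $\leq 2$ is excluded by letting a group surjecting onto $S_8$ act on the sphere/disk dual complex of a quasi-\'etale cover (\cite{KX16}, smoothed by \cite{Par21} and orthogonalized as in \cite{zim18}), using \Cref{lem:finite-subgroup-SO} to get an $A_8$-quotient fixing a vertex, and again contradicting \Cref{lem:A8-CY-3fold}. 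The group-theoretic dividing line between $S_7$ and $S_8$ enters there through actions on \emph{log Calabi--Yau} pairs of dimension $\leq 3$ with nonzero boundary and on low-dimensional spheres --- not, as in your sketch, through the stabilizer of an mld-minimizing center, which is the step that cannot be controlled.
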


In contrast, the \(S_7\)-equivariant klt Fano \(4\)-folds are unbounded (see \Cref{ex:S_7-Fano-4-fold}).

It is worth mentioning that we are not aware of moduli in the bounded family from \Cref{introthm:S8-4-fold-bounded}.
This means that all the examples that we know are isolated (see \Cref{ex:max-sym-Fano-4-fold} and \Cref{quest:moduli}).
Note that for \(n\leq 3\), the classification shows that there are only finitely many maximally symmetric Fanos of dimension \(n\) \cite{DI09,Prokhorov_space_Cremona}; in fact, for \(n=3\) there is only one up to conjugation.

The proof of \Cref{introthm:S8-4-fold-bounded} uses several results in geometry: 
finite actions on spheres~\cite{zim18}, 
finite actions on rationally connected varieties~\cite{BCDP-23},
dual complexes of log Calabi--Yau pairs~\cite{KX16}, 
and boundedness of Fano varieties~\cite{Bir21}.
We expect that maximally symmetric $n$-dimensional Fano varieties
form bounded families (see \Cref{quest:max-symm-bound}). 
Let us emphasize that the behavior 
described
in \Cref{introthm:S8-4-fold-bounded} is not expected
for actions by other finite groups. 
For instance, 
every $n$-dimensional toric Fano variety 
admits the action of $(\zz/m)^n$ for $m$ arbitrarily large.
However, in the case of finite abelian actions, 
we have some structural theorems instead.
In~\cite[Theorem 2]{Mor20c}, it is proved that $n$-dimensional Fano varieties with $(\zz/m)^n$-actions for $m$ large are compactifications of $\mathbb{G}_m^n$.

In a similar vein, we prove a local statement for $5$-dimensional klt singularities with faithful $S_8$-actions. 
In this case, as it is usual in the domain of singularities, 
we only get a bounded family up to degeneration (see \Cref{def:bounded-deg}).

\begin{introthm}\label{introthm:S8-5-dim-klt}
Let $\epsilon>0$.
The class of $S_8$-equivariant $5$-dimensional klt singularities with minimal log discrepancy at least $\epsilon$
forms a family which is bounded up to degeneration.
\end{introthm}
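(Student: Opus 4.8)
The plan is to reduce this local statement to the global boundedness of $S_8$-equivariant Fano $4$-folds (\Cref{introthm:S8-4-fold-bounded}, or more precisely its log version \Cref{thm:S8-4-log-pair-bounded}) by passing to the K-semistable degeneration of the singularity. The main input is the stable degeneration theorem for klt singularities: a klt singularity $(X \ni x)$ carries a unique minimizer $v_0$ of the normalized volume function, and it degenerates to a K-semistable Fano cone singularity $(X_0 \ni x_0, \xi_0)$ built from the associated graded ring $\gr_{v_0}\mathcal{O}_{X,x}$. Because the minimizer is unique, it is canonical, and hence preserved by every automorphism of the germ; in particular an $S_8$-action on $(X \ni x)$ induces an $S_8$-action on the central fiber $(X_0 \ni x_0, \xi_0)$ normalizing the Reeb torus and fixing $\xi_0$, so the whole degeneration is $S_8$-equivariant.

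Since we only need boundedness up to degeneration (\Cref{def:bounded-deg}), it then suffices to bound the central fibers $X_0$. First I would note that $\mld$ is lower semicontinuous, so the hypothesis $\mld(X \ni x) \geq \epsilon$ passes to the special fiber, giving $\mld(X_0 \ni x_0) \geq \epsilon$. Next, I would encode the $5$-dimensional K-semistable Fano cone $(X_0, \xi_0)$ by its base: quotienting by the Reeb torus presents $X_0$ as an orbifold cone over a K-semistable log Fano pair $(V, \Delta_V)$ of dimension at most $4$, where the coefficients of the different $\Delta_V$ lie in a fixed finite set determined by $\epsilon$. The residual $S_8$-action descends to $(V, \Delta_V)$.

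The heart of the argument is to bound these bases. When $\dim V = 4$, the pair $(V, \Delta_V)$ is exactly an $S_8$-equivariant klt log Fano $4$-fold pair with $\mld \geq \epsilon$, so \Cref{thm:S8-4-log-pair-bounded} applies directly; in the lower-dimensional cases I would instead invoke boundedness of log Fano pairs with $\mld$ bounded away from zero~\cite{Bir21} together with the $S_8$-equivariance. Finally, bounding the base, combined with controlling the finitely many polarizations and Reeb directions compatible with the $S_8$-action (using the $\epsilon$-bound to constrain the normalized volume and hence the torus data), shows that the cones $X_0$ form a bounded family, which is exactly the desired conclusion up to degeneration.

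The main obstacle I anticipate lies in the equivariance and descent to the base pair: one must verify that the $S_8$-action genuinely normalizes the Reeb torus on $X_0$ and descends to $(V, \Delta_V)$ without introducing unbounded twisting, and that the finitely many Reeb vectors compatible with $S_8$ keep the induced polarization in a bounded range. Controlling the coefficients of $\Delta_V$ and matching the singularity bound $\mld \geq \epsilon$ with the precise hypotheses of \Cref{thm:S8-4-log-pair-bounded} is the technical crux; once the base is bounded, reassembling the cone and concluding boundedness up to degeneration is comparatively formal.
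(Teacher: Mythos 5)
Your overall strategy---degenerate the singularity to a cone over a lower-dimensional $S_8$-equivariant log Fano base and then invoke \Cref{thm:S8-4-log-pair-bounded}---has the same global-to-local shape as the paper's proof, but the two steps you yourself flag as the ``technical crux'' are genuine gaps, not just details. First, the claim that $\mld(X;x)\geq\epsilon$ passes to the K-semistable central fiber ``by lower semicontinuity'' is backwards: lower semicontinuity of the mld in families (Ambro's conjecture, open in dimension $5$) would assert $\mld(X_0;x_0)\leq\liminf_{t\to 0}\mld(\mathcal X_t;x_t)$, i.e.\ it bounds the \emph{nearby} fibers in terms of the special one, never the special fiber in terms of the nearby ones; degenerations can and do make the mld drop. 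So you obtain no $\epsilon$-lc control on $(X_0,\xi_0)$, hence none on the base pair $(V,\Delta_V)$, and the hypotheses of \Cref{thm:S8-4-log-pair-bounded} and of \cite{Bir21} are simply not verified for it. Second, the proposed control of the Reeb/polarization data via ``the $\epsilon$-bound constrains the normalized volume'' is false: a lower bound on the mld does not bound $\widehat{\vol}$ from below (the $A_m$ surface singularities are canonical, with $\mld=1$, yet $\widehat{\vol}=4/(m+1)\to 0$). Since cones over a \emph{fixed} base with unbounded polarization degree form unbounded families---this is exactly the mechanism of \Cref{ex:S_7-Fano-4-fold}, where $\mld(X_m)=\tfrac{1}{4m}$---bounding the base alone cannot suffice, and the middle link of your chain $\epsilon\Rightarrow\widehat{\vol}\Rightarrow$ torus data is broken. (A further, smaller issue: for an irrational Reeb vector $\xi_0$ the ``quotient by the Reeb torus'' is not a polarized log Fano pair in the naive sense, so your rank-one reduction to $(V,\Delta_V)$ with finitely many coefficients also needs justification.)

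The paper avoids both problems by never invoking stable degeneration. It quotients $(X;x)$ by $S_8$, extracts a Koll\'ar component $E'$ over the image point via \cite[Lemma 1]{Xu14}, and pulls back to get an $S_8$-equivariant plt pair $(X',F)$ with $F$ prime (connectedness of log canonical centers) on which $S_8$ acts faithfully (\Cref{lem:normal+cyclic}); adjunction then produces an $S_8$-equivariant klt log Fano $4$-fold $(F,B_F)$ with \emph{standard} coefficients, to which \Cref{thm:S8-4-log-pair-bounded} applies directly. The point where $\epsilon$ enters is exactly the one your argument is missing: since $F$ is a divisor over $x$, its log discrepancy satisfies $a_F(X)\geq\mld(X;x)\geq\epsilon$, and this bounds the degree of the $\qq$-polarization $-F|_F$ of the orbifold cone against the bounded quantity $-(K_F+B_F)$; boundedness up to degeneration then follows from \cite[Theorem 1.1]{HLM20} and \cite[Theorem 1]{Mor18c}. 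To salvage your K-stability route you would need (i) an equivariant statement that the stable degeneration preserves some $\epsilon'$-lc bound, and (ii) a direct bound on the Reeb data replacing the false normalized-volume step---at which point you would essentially have reconstructed the Koll\'ar-component argument.
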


We summarize the largest known maximal symmetric actions on various types of varieties and topological spaces in \Cref{tab:summary}.

\begin{table}[h]
    \renewcommand{\arraystretch}{1.2}
    \centering
    \begin{tabular}{c|c|c|c|c}
   \renewcommand{\arraystretch}{1.2}
    Dimension & Fano & Calabi--Yau & Rational & Sphere\\
    \hline 
    1 & 4* & 3* & 4* & 3* \\
    \hline
    2 & 5* & 6* & 5* & 4*\\
    \hline
    3 & 7* & 7 & 6* & 5*\\
    \hline
    4 & 8 & 8 & 7 & 6*\\
    \hline
    5 & 9 & 10 & 8 & 7  \\
    \hline
    $n\gg 0$ & $n + \sqrt{2n} + O(1)$ &  $n + \sqrt{2n} + O(1)^{\dag}$ & $n+3$ & $n+2$\\
\end{tabular} 
    \caption{Each table entry shows the maximal $k$ for which $S_k$ is known to act faithfully on an object of the indicated class and dimension, to our knowledge.  In the case of the $n$-sphere, we consider topological actions.  Entries with * are known to be optimal.  The expressions for Fano and Calabi--Yau $n$-folds are approximate; precise formulas appear in \Cref{sect:wci}.  In the asymptotic Calabi--Yau case $(\dag)$, examples with these asymptotics are only known for infinitely many values of $n$, rather than all $n$.  See \Cref{rem:CY_sharp}.}
    \label{tab:summary}
\end{table}

The fact that $S_4$ is the largest symmetric group acting on $\pp^1$ is a classical result (see, e.g.,~\cite{beauville-PGL2}).
For the action of $S_5$ on Fano surfaces, we refer
to the work of Dolgachev and Iskovskikh~\cite{DI09}.
The fact that $S_3$ is the largest symmetric group acting on curve of genus $1$ is classical (see, e.g.,~\cite{Sil09}).
The fact that $S_6$ is the largest symmetric group acting on Calabi--Yau surfaces follows 
from the work of Mukai and Fujiki~\cite{Muk88,Fuj88}.
For the actions of symmetric groups on rationally connected varieties of dimension at most $3$,
we refer the reader to the work of Blanc, Cheltsov, Duncan, and Prokhorov~\cite{BCDP-23,Pro12,Prokhorov_space_Cremona}.
For smooth actions of symmetric groups on spheres of dimension at most $4$, we refer the reader to the work of 
Mecchia and Zimmerman~\cite{zim18,MZ06}.  The symmetric group $S_{n+2}$ acts on the $n$-sphere for any $n \geq 1$ as the symmetries of the (boundary of the) regular $(n+1)$-simplex.  The examples for the remaining table entries appear in \Cref{sec:ex}.

\subsection{Outline}
We begin with preliminary results in \Cref{sec:preliminaries}.
In \Cref{sec:fano-bound}, we prove the quadratic bounds for Fano varieties and klt singularities in Theorems~\ref{introthm:quadratic-bound-Fano} and \ref{introthm:quadratic-bound-klt}. Next, we study toric varieties and weighted complete intersections. We prove \Cref{introthm:toric} on toric varieties in \Cref{sec:toric}. In Sections~\ref{sect:wci} and~\ref{sect:max_sym_vars}, we study weighted complete intersections. In \Cref{sect:wci}, we consider Fano and Calabi--Yau weighted complete intersections: we show the implications of a large symmetric action on the defining equations of such a weighted complete intersection, and we prove \Cref{introthm-bound-Fano-WCI}. In \Cref{sect:max_sym_vars}, we study the maximally symmetric Fano case and prove Theorems~\ref{introthm:maximally-symmetric-Fano-WCI} and~\ref{introthm:maximally-symmetric-large-index-Fano-WCI}.
Next, in \Cref{sec:sym-bound}, we prove the boundedness results of Theorems~\ref{introthm:S8-4-fold-bounded} and~\ref{introthm:S8-5-dim-klt}. Finally, in \Cref{sec:ex}, we end the article with several examples and questions.

\subsection*{Notation}
We work over the field of complex numbers $\cc$.
Throughout the article
$\zz/m$ denotes the cyclic group with $m$ elements.
\(S_k\) and \(A_k\) denote the symmetric and alternating groups, respectively, on a set of order \(k\). For a finite set \(W\), we also use \(S_W\) to denote the symmetric group on \(W\).

Let \(X\) be a variety. Its automorphism group (regarded with the reduced scheme structure) will be denoted \(\Aut(X)\). For a subscheme \(Z\subset X\), we let \(\Aut(X,Z) \leqslant\Aut(X)\) denote the subgroup of automorphisms that fix \(Z\) (not necessarily pointwise). The Weil divisor class group of a normal variety \(X\) is denoted \(\Cl X\).

\subsection*{Acknowledgements}
The authors would like to thank Serge Cantat, 
Mattia Mecchia,
Yuri Prokhorov, Zinovy Reichstein, and Burt Totaro
for many useful comments.
Part of this work was carried out during a visit of L.J. to the University of California, Los Angeles
and a visit of J.M. to the University of Michigan.
The authors would like to thank these institutions, as well as Burt Totaro and Mircea Musta\c{t}\u{a}, for their hospitality and nice working environment.

\section{Preliminaries}\label{sec:preliminaries}

In this section, we recall some preliminaries regarding
representation theory of symmetric groups, 
singularities of the MMP, 
Fano and Calabi--Yau varieties, 
and boundedness of varieties.

\subsection{Representation theory of symmetric and alternating groups}\label{sec:rep_S_k}

In this section, we review the linear and projective representation theory of alternating and symmetric groups that are required for our proofs. For the projective representation theory of $A_k$ and $S_k$, we refer to \cite{Stembridge,WalesSn}.

\begin{definition}[{\cite[Section 6.9]{Weibel-homological}}]
    {\em 
    A \defi{central extension} of a group \(G\) is an extension \(1\to K\to H\to G\to 1\) such that \(K\) is in the center of \(H\). \(H\) is called a \defi{universal central extension} of \(G\) if for every central extension \(1\to K'\to H'\to G\to 1\) there is a unique homomorphism from \(H\) to \(H'\) over \(G\): \[\xymatrix{1 \ar[r] & K \ar[r] \ar[d] & H \ar[r] \ar[d]^-{\exists!} & G \ar[r] \ar@{=}[d] & 1 \\ 1 \ar[r] & K' \ar[r] & H' \ar[r] & G \ar[r] & 1.}\] If a universal central extension of \(G\) exists, then it is unique up to isomorphism over \(G\).  A group $G$ has a universal central extension if and only if it is perfect \cite[Theorem 6.9.5]{Weibel-homological}.
    }
\end{definition}

Central extensions of finite groups $G$ are important for classifying their projective representations, i.e., embeddings $G \hookrightarrow \Aut(\pp^n) = \PGL_{n+1}(\C)$. Indeed, any projective representation of $G$ in $\PGL_{n+1}(\C)$ gives rise to a linear representation of a central extension in $\GL_{n+1}(\C)$, whose projectivization is the original representation.  For the alternating and symmetric groups, one can use a single central extension to classify all projective representations.  This classification was first achieved by Schur \cite{Schur}.

\begin{example}[{\cite[Example 6.9.10]
{Weibel-homological}}]\label{exmp:central-extn-A_k}
    {\em 
    The regular representation \(A_k\to \SO_{k-1}\) of the alternating group gives rise to a central extension \[\xymatrix{ 1 \ar[r] & \zz/2 \ar[r] & 2\cdot A_k \ar[r] & A_k \ar[r] & 1 }\] by restricting the central extension \(1\to\zz/2\to\mathrm{Spin}_{k-1}(\mathbb R)\to \SO_{k-1} \to 1\). For \(k\geq 5\), the group \(A_k\) is perfect, and the universal central extension is the \defi{Schur covering group}, which we denote \(\tilde{A}_k\). The Schur multiplier is \[ H^2(A_k,\C^*) = \begin{cases} 0 &\quad k \leq 3, \\ \zz/2 &\quad k\in\{4,5\} \cup \zz_{\geq 8}, \\ \zz/6 &\quad k\in\{6,7\}. \end{cases}\] For \(k=5\) and \(k\geq 8\), \(\tilde{A}_k\) is the double cover \(2\cdot A_k\).

    For \(k=6, 7\), there are additional covers \(3\cdot A_k\) and \(6\cdot A_k\) (which are central extensions of \(A_k\) by \(\zz/3\) and \(\zz/6\), respectively), and we have \(\tilde{A}_k=6\cdot A_k\). See \cite[Sections 2.7.3 and 2.7.4]{Wilson-groups} for the constructions of the triple covers \(3\cdot A_k\).
    }
\end{example}

\begin{example}\label{exmp:central-extn-S_k}
{\em 
    The Schur multiplier of $S_k$ is given by
    \[ H^2(S_k,\C^*) = \begin{cases} 0 &\quad k \leq 3, \\ \zz/2 &\quad k \geq 4. \end{cases}\]
    Unlike in the case of $A_k$, $S_k$ is not a perfect group and there is no universal central extension.  In fact, the regular representation $S_k \rightarrow \mathrm{O}(k-1)$ gives rise to two possible central extensions $2 \cdot S^{\pm}_k$ of order $2$ when $k \geq 4$.   These are the restrictions of $\mathrm{Pin^{\pm}}(\R) \rightarrow \mathrm{O}(k-1)$, where $\mathrm{Pin^{\pm}}(\R)$ is one of the two pin groups.  Both extensions are maximal, but they are only isomorphic when $k = 6$.  
    
    However, the representation theory of $S^+_k$ is essentially the same as that of $S^-_k$.  In particular, the dimensions of their irreducible representations are the same \cite[page 93]{Stembridge}.  From now on, we'll denote by $\tilde{S}_k$ a Schur cover of $S_k$, and not make a distinction between the two possible choices for $k \geq 4$.
}
\end{example}

The automorphism groups of varieties appearing in this paper are often the quotients of linear algebraic groups by central subgroups.  For example, $\Aut(\pp^n) \cong \PGL_{n+1}(\C) = \GL_{n+1}(\C)/\C^*$, where $\C^*$ is the group of scalar matrices.  To identify embeddings of $S_k$ or $A_k$ in these automorphism groups, we therefore need to understand the representation theory of their central extensions.  A faithful linear representation of $S_k$ or $\tilde{S}_k$ of dimension $n+1$, for instance, gives a projective representation of $S_k$ in dimension $n$.  \Cref{tab:S_k_reps} lists the smallest degrees of faithful representations of $S_k$ and $\tilde{S}_k$ for all $k \geq 4$.  The smallest value in each row determines the largest symmetric group acting on $\pp^n$, namely $S_4$ if $n = 1$, $S_6$ if $n = 3$, and $S_{n+2}$ for all other values of $n$.

\begin{table}
    \centering
    \begin{tabular}{c|c|c}
    \(k\) & \(S_k\) & \(\tilde{S}_k\) \\
    \hline
    4 & 3 & 2 \\
    \hline 
    5 & 4 & 4 \\
    \hline
    6 & 5 & 4 \\
    \hline
    \(\geq 7\) & \(k-1\) &  \(2^{\lfloor (k-1)/2 \rfloor}\)\\
\end{tabular}
\caption{The table above summarizes the representation theory of $S_k$ and $\tilde{S}_k$ for $k \geq 4$. The second column shows the degree of the smallest faithful representation of $S_k$, and the third of $\tilde{S}_k$.}
    \label{tab:S_k_reps}
\end{table}

In \Cref{sect:wci}, we'll require the following further lemma about symmetric group representations.  It is expressed in terms of the function

$$c_{\mathrm{Fano}}(n) \coloneqq n + \left\lceil \frac{1 + \sqrt{8n + 9}}{2} \right\rceil,$$
which will be important in \Cref{sect:wci}.

\begin{lemma}
\label{sym_group_reps}
Let $n \geq 4$ and $k \geq c_{\mathrm{Fano}}(n)$.  Let \(S_k\) be the symmetric group of order \(k\), and let $\tilde{S}_k$ be a representation group of $S_k$.  Unless $n = 4$ and $k = c_{\mathrm{Fano}}(4) = 8$, the only irreducible representations of $\tilde{S}_k$ with dimension at most $2n+2$ are: the trivial representation, the sign representation (of dimension $1$), the standard representation of $S_k$ (of dimension $k - 1$), and the tensor product of the standard and sign representations (of dimension $k-1$).

In the special case of $n = 4$, $k = 8$, there is also a faithful representation of $\tilde{S}_8$ of dimension $8$.  Any polynomial in $8$ variables which is $\tilde{S}_8$-invariant up to sign for this representation is contained in the invariant ring $\C[z_1,\ldots,z_8]^{\tilde{A}_8}$ given by restriction of the basic spin representation to the subgroup $\tilde{A}_8$.  This ring has lowest degree generators $h_1, h_2, h_3$ of degrees $2, 8$, and $8$, respectively.
\end{lemma}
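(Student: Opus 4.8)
The plan is to classify all irreducible representations of $\tilde{S}_k$ of dimension at most $2n+2$ by splitting them into two families: those that factor through the quotient $S_k$ (the \emph{linear} representations), and the genuinely faithful ones (the \emph{spin} representations). In each family the relevant minimal dimension grows with $k$, whereas $2n+2$ is controlled by $c_{\mathrm{Fano}}(n)\approx n+\sqrt{2n}$, so I expect every representation other than the four named ones to be excluded, with exactly one failure coming from the spin side. Throughout I use that $k\geq c_{\mathrm{Fano}}(n)\geq c_{\mathrm{Fano}}(4)=8\geq 7$, so that \Cref{tab:S_k_reps} applies.

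For the linear representations I would invoke the classification of the smallest-degree irreducible characters of the symmetric group (Rasala's minimal-degree theorem): for $k\geq 9$, every irreducible of $S_k$ apart from the trivial, sign, standard, and standard$\otimes$sign representations has dimension at least $\tfrac{k(k-3)}{2}$, the minimum being realized by the partition $(k-2,2)$. It then suffices to verify $\tfrac{k(k-3)}{2}>2n+2$ for all $k\geq c_{\mathrm{Fano}}(n)\geq 9$; as the left side is increasing in $k$, this reduces to $k=c_{\mathrm{Fano}}(n)$, where it holds for $n\geq 8$ already from $k\geq n$ (since $n^2-7n-4>0$ there) and follows by a direct check for $5\leq n\leq 7$, the tightness at the boundary being governed by the defining relation $2n+2\leq t(t-1)$ with $t=c_{\mathrm{Fano}}(n)-n$. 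The remaining value $k=8$ (which forces $n=4$) is handled by hand: the smallest irreducible of $S_8$ outside the four named ones is the partition $(4,4)$ of dimension $14$, which still exceeds $2n+2=10$.

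For the spin representations, \Cref{tab:S_k_reps} shows that the smallest faithful representation of $\tilde{S}_k$ is the basic spin representation of dimension $2^{\lfloor (k-1)/2\rfloor}$, so I only need $2^{\lfloor (k-1)/2\rfloor}>2n+2$ for $k\geq c_{\mathrm{Fano}}(n)$. Again this is monotone in $k$ and reduces to $k=c_{\mathrm{Fano}}(n)$, where the exponential left side dominates the linear right side for every $n\geq 5$ (a short check for the first few $n$ together with the growth $c_{\mathrm{Fano}}(n)\approx n+\sqrt{2n}$). The unique failure is $n=4$, $k=8$, where $2^{3}=8\leq 10=2n+2$; this isolates the exceptional $8$-dimensional representation in the statement.

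Finally, for the exceptional case I would study the basic spin representation of $\tilde{S}_8$ on $\C^8$. Any polynomial fixed up to sign transforms under a character of $\tilde{S}_8$ valued in $\{\pm 1\}$, and both such characters (trivial and sign) restrict to the trivial character on the index-two subgroup $\tilde{A}_8$; hence the polynomial is genuinely $\tilde{A}_8$-invariant and lies in $\C[z_1,\dots,z_8]^{\tilde{A}_8}$. Since $8-1=7$ is odd, the basic spin representation restricts irreducibly to $\tilde{A}_8$, and because the spin representation of $\mathrm{Spin}(7)$ is of real (orthogonal) type and $\tilde{A}_8$ embeds in it, this restriction preserves a unique invariant quadratic form, giving the degree-$2$ generator $h_1$. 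To pin down the remaining generators I would compute the Molien series of $\tilde{A}_8=2\cdot A_8$ in this representation, using that the central element acts as $-\mathrm{id}$ (so only even degrees occur) and that the continuous overgroup $\mathrm{Spin}(7)$ already has invariant ring $\C[h_1]$; the computation should show that the invariants in degrees $2,4,6$ are spanned by powers of $h_1$ while two new generators $h_2,h_3$ appear in degree $8$. I expect this final invariant-theoretic step---verifying that the first new generators occur precisely in degree $8$ and that there are exactly two of them---to be the main obstacle, most cleanly settled from the explicit character data of $2\cdot A_8$ or a computer-algebra Molien series computation, possibly streamlined by the exceptional isomorphism $A_8\cong\GL_4(\F_2)$.
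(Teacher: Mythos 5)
Your proposal is correct and follows essentially the same route as the paper: split the irreducible representations of $\tilde{S}_k$ into those factoring through $S_k$ (handled by Rasala's minimal-degree theorem) and the faithful spin representations (handled by the dimension $2^{\lfloor (k-1)/2\rfloor}$ of the basic spin representation), isolate the unique failure at $n=4$, $k=8$, reduce sign-invariance to $\tilde{A}_8$-invariance, and settle the generator degrees $2,8,8$ of $\C[z_1,\ldots,z_8]^{\tilde{A}_8}$ by a Molien series computation, exactly as the paper does via \texttt{gap}. Your added observations---the explicit monotonicity reductions in $k$, the central element acting by $-\mathrm{id}$ forcing even degrees, and the conceptual origin of the degree-$2$ invariant from the real (orthogonal) type of the $\mathrm{Spin}(7)$ spin representation---are correct refinements of, not departures from, the paper's argument.
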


\begin{proof}
The smallest faithful representation of $\tilde{S}_k$ is the \defi{basic spin representation}, which has dimension \(2^{(k-2)/2}\) if \(k\) is even and dimension \(2^{(k-1)/2}\) if \(k\) is odd \cite[Section 3]{Stembridge}.  These dimensions are greater than $2k$ and hence greater than $2n+2$ when $k \geq 11$.  The only remaining cases are $n = 4$ and $n = 5$, where $k \geq c_{\mathrm{Fano}}(4) = 8$ or $k \geq c_{\mathrm{Fano}}(5) = 9$.  Omitting the special case of $n = 4$, $k = 8$, the basic spin representations of $\tilde{S}_k$ have dimension at least $16$ in both cases, which is larger than $2n+2$ for either value of $n$.

If a representation of $\tilde{S}_k$ is not faithful, then it factors through $S_k$.  Therefore, it remains to bound the sizes of representations of $S_k$.  As above, the assumptions of the lemma imply $k \geq 8$.  A result of Rasala \cite[Result 2]{Rasala} gives that the first three dimensions of irreducible representations of $S_k$ are $1, k-1$, and  $\frac{1}{2}k(k-3)$ for $k \geq 9$.  The irreducible representations of dimension $1$ and $k-1$ are precisely those stated in the lemma.  The third value, $\frac{1}{2}k(k-3)$, grows quadratically in $k$ and is greater than $2n+2$ when $k \geq 9$.  In the $k = 8$ case, the next largest representation of $S_8$ has dimension $14$, which is greater than $10 = 2 \cdot 4 + 2$.

Now we return to the case $n = 4$, $k = 8$.  The smallest dimensions of representations of $\tilde{S}_8$ are $1, 7, 8$ (all others have dimension greater than $2n + 2 = 10$).  The $1$- and $7$-dimensional representations are the ones already listed.  The representations of dimension $8$ are the basic spin representations.  A polynomial which is invariant up to sign under the $\tilde{S}_8$-action is in particular an $\tilde{A}_8$-invariant polynomial, where $\tilde{A}_8$ is the (unique) Schur double cover of $A_8$.

The dimensions of the graded pieces of the invariant ring $\C[z_1,\ldots,z_8]^{\tilde{A}_8}$ are readily computable using Molien's formula, for example using \texttt{gap}.  This computation yields that the first few generators have degrees $2, 8$, and $8$.
\end{proof}

Next, we collect results on the minimal degree characters of the alternating group and its Schur cover. We will use these results in \Cref{sec:toric}.
\begin{lemma}\label{lem:irreps-Atilde_k}
    For \(k\geq 4\), let \(\tilde{A}_k\) be the Schur covering group of \(A_k\). The minimal degree faithful representations of \(A_k\) and its central extensions are summarized in~\Cref{tab:A_k_reps}. If \(k=8\) or \(k\geq 10\) then the smallest degree of a nontrivial irreducible representation of \(\tilde{A}_k\) is \(k-1\), and it factors through the standard representation of \(A_k\).
\end{lemma}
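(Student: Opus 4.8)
The plan is to split the irreducible representations of $\tilde A_k$ into those that factor through the quotient $A_k$ (the \emph{ordinary} representations) and those that are faithful on the central $\zz/2$ (the \emph{spin}, or genuine, representations). By \Cref{exmp:central-extn-A_k}, for $k = 8$ and $k \geq 10$ we have $\tilde A_k = 2\cdot A_k$, so every irreducible representation is of exactly one of these two types and the minimal nontrivial degree is the smaller of the two minimal degrees. I would bound each type separately and then compare.

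First, for the ordinary representations I would invoke the classical determination of the smallest-degree irreducibles of $A_k$ (Rasala \cite{Rasala}, cf.\ the argument in \Cref{sym_group_reps}): for $k \geq 8$ the two smallest irreducible degrees are $1$ (trivial) and $k-1$, the latter realized \emph{uniquely} by the standard representation, which is the restriction of the standard representation of $S_k$ and stays irreducible on $A_k$ because the partition $(k-1,1)$ is not self-conjugate for $k \geq 4$. The third-smallest ordinary degree grows quadratically in $k$, so no ordinary irreducible other than the standard one has degree $\leq k-1$. Second, for the spin representations I would use Schur's dimension formula for the basic spin representation recorded in \cite[Section 3]{Stembridge}: the basic spin representation of $\tilde S_k$ has dimension $2^{\lfloor (k-1)/2\rfloor}$, and on restriction to $\tilde A_k$ it stays irreducible when $k$ is even and splits into two irreducibles of half the dimension when $k$ is odd. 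Hence the minimal degree of a spin irreducible of $\tilde A_k$ is $2^{k/2-1}$ for $k$ even and $2^{(k-3)/2}$ for $k$ odd, and every faithful irreducible has at least this dimension.

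The crux is the comparison. I would verify the elementary inequalities $2^{k/2-1} > k-1$ for even $k \in \{8\}\cup\{10,12,\dots\}$ and $2^{(k-3)/2} > k-1$ for odd $k \geq 11$; both hold since the left-hand sides grow exponentially while $k-1$ grows linearly, so only the smallest value in each parity needs to be checked by hand. This is precisely where $k = 9$ must be excluded: there $2^{(k-3)/2} = 2^{3} = 8 = k-1$, so $\tilde A_9$ has a faithful spin irreducible of the same degree $k-1$ as the standard representation, and the stated conclusion genuinely fails. For all remaining $k$ in the range, the minimal spin degree strictly exceeds $k-1$, so the minimal nontrivial degree of $\tilde A_k$ equals $k-1$ and is attained only by the standard representation, which factors through $A_k$; this gives both assertions of the second part. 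The table in the first part of the lemma—the minimal faithful degrees of $A_k$ and its covers for small $k$, including the triple and sextuple covers at $k = 6,7$ from \Cref{exmp:central-extn-A_k}—then follows from the same two inputs together with the explicit character degrees, which can be read off from the ATLAS or computed directly (cf.\ \cite{Wilson-groups}).

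The main obstacle I anticipate is the bookkeeping of the spin dimensions through the restriction from $\tilde S_k$ to $\tilde A_k$: getting the even/odd splitting correct so that the coincidence $2^{(k-3)/2} = k-1$ at $k = 9$ is correctly identified as the unique reason the clean statement breaks, and simultaneously confirming the uniqueness of the $(k-1)$-dimensional ordinary irreducible in the relevant range, which is what lets us conclude that the minimal representation indeed \emph{factors through} the standard representation rather than merely matching its dimension.
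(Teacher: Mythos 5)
Your proposal is correct and follows essentially the same route as the paper's proof: split irreducibles of \(\tilde{A}_k\) into those factoring through \(A_k\) (handled via restriction from \(S_k\) and Rasala's minimal-degree result, as in \Cref{sym_group_reps}) and the faithful spin representations, whose minimal dimension \(2^{\lfloor (k-2)/2\rfloor}\) exceeds \(k-1\) exactly for \(k=8\) and \(k\geq 10\), with the small cases and the covers \(3\cdot A_k\), \(6\cdot A_k\) checked by direct computation. The only cosmetic difference is that you derive the spin dimension by restricting the basic spin representation of \(\tilde{S}_k\) with the parity-dependent splitting, while the paper cites Wales directly for the basic spin dimension of \(2\cdot A_k\); your explicit identification of the coincidence \(2^{(k-3)/2}=k-1\) at \(k=9\) as the reason for excluding that case matches the paper's argument.
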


    \begin{table}
    \centering
    \begin{tabular}{c|c|c|c|c}
   \renewcommand{\arraystretch}{1.2}
    \(k\) & \(A_k\) & \(2\cdot A_k\) & \(3\cdot A_k\) & \(\tilde{A}_k\) \\
    \hline 
    4 & 3 & 2 & N/A & 2 \\
    \hline 
    5 & 3 & 2 & N/A & 2\\
    \hline
    6 & 5 & 4 & 3 & 6\\
    \hline
    7 & 6 & 4 & 6 & 6 \\
    \hline
    \(\geq 8\) & \(k-1\) & \(2^{\lfloor (k-2)/2 \rfloor} \) & N/A & \(2^{\lfloor (k-2)/2 \rfloor} \) \\
    \end{tabular}
    \caption{The table above summarizes the smallest degrees of the faithful representations of \(A_k\) and its central extensions. The Schur covering group \(\tilde{A}_k\) is $2 \cdot A_k$ for $k = 4,5$, $k \geq 8$ and $6 \cdot A_k $ for $k = 6,7$.}
    \label{tab:A_k_reps}
    \end{table}

\begin{proof}
    The faithful representation of \(2\cdot A_k\) is the basic spin representation, which has degree \(2^{\lfloor \frac{k-2}{2} \rfloor}\) by \cite{WalesSn}. If a representation of \(\tilde{A}_k\) is not faithful and if \(k\neq 6,7\), then it factors through \(A_k\). Every irreducible character of \(A_k\) is obtained from the restriction of an irreducible character of \(S_k\) \cite[Statement 20.13.(3)]{James-Liebeck}, so \Cref{sym_group_reps} implies that \(A_k\) has exactly one nontrivial irreducible character of minimal degree \(k-1\), which is the standard representation. For \(k\geq 8\) we have \(k-1 \geq 2^{\lfloor \frac{k-2}{2} \rfloor}\). Strict inequality holds for \(k=8\) and \(k\geq 10\), so the unique smallest degree representation of \(\tilde{A}_k\) comes from the standard representation of \(A_k\).

    If \(k=6\) or \(7\), then \(2\cdot A_k\) is no longer the Schur cover, and we need to also consider the minimal degree faithful representations of \(3\cdot A_k\) and \(6\cdot A_k\). These can be computed directly using \texttt{gap}.
    
\end{proof}

\subsection{Singularities and positivity of pairs}
In this subsection, we briefly recall some terminology related to singularities of pairs. We refer the reader to~\cite{KM98}.

\begin{definition}
{\em  
A \defi{log pair} $(X,B)$ is a couple 
consisting of a normal quasi-projective variety $X$ and an effective divisor $B$ for which $K_X+B$ is a $\qq$-Cartier divisor.
Let $\pi\colon Y\rightarrow X$ be a projective morphism from a normal variety. 
Let $E\subset Y$ be a prime divisor.
The \defi{log discrepancy} of $(X,B)$ at $E$ is the rational number $1-{\rm coeff}_E(B_Y)$ where $B_Y$ is defined by the formula:
\[
K_Y+B_Y=\pi^*(K_X+B).
\]
We say that $(X,B)$ is \defi{Kawamata log terminal} or \defi{klt} for short if
all the log discrepancies of $(X,B)$ are positive.
We say that $(X,B)$ is \defi{log canonical} or \defi{lc} for short if all the log discrepancies are nonnegative.
We say that $X$ is \defi{klt} (resp. \defi{lc}) if the pair $(X,0)$ is klt (resp. lc).
}
\end{definition}

In~\Cref{sec:sym-bound}, we will consider group actions on log pairs. 

\begin{definition}
{\em 
Let $(X,B)$ be a log pair.
We write $G\leqslant {\rm Aut}(X,B)$
if $G$ is a group acting on $X$
and $g^*B=B$ for every $g\in G$.
In particular, every element of $G$
maps components of $B$ 
to components of $B$ with the same coefficient.

Let $X$ be an algebraic variety, $G\leqslant {\rm Aut}(X)$ be a finite subgroup, and $\pi\colon X\rightarrow Y\coloneqq X/G$ be the quotient.
We say that $\pi$ is {\em quasi-\'etale} if it is \'etale over a big open subset of $Y$.
}
\end{definition}

The main objects of study of this article
are Fano and Calabi--Yau varieties. 

\begin{definition}
{\em 
A \defi{Fano pair} is a log pair $(X,B)$ with klt singularities for which $-(K_X+B)$ is ample. 
If $B=0$, then we simply say that $X$ is a \defi{Fano variety}.
A \defi{Calabi--Yau variety} is a variety $X$ with klt singularities for which $K_X\sim_{\qq}0$.
A \defi{log Calabi--Yau pair} is a log pair $(X,B)$ with log canonical singularities for which $K_X+B\sim_\qq 0$.
}
\end{definition}

Note that we allow log Calabi--Yau pairs to have log canonical singularities.
This is a natural assumption to make when considering boundaries on Fano varieties
that induce a log Calabi--Yau structure.

\subsection{Boundedness of varieties and singularities} 
In this subsection, we recall some concepts about boundedness of varieties and singularities.
In~\Cref{sec:sym-bound}, we will prove some results regarding boundedness
of Fano varieties
and klt singularities
admitting large symmetric actions.

\begin{definition}\label{def:bounded}
{\em 
Let $\mathcal{C}$ be a class of log pairs. 
We say that the class $\mathcal{C}$ is \defi{log bounded}
if the following condition holds.
There exists a finite type morphism
$\mathcal{X}\rightarrow T$ 
and a boundary $\mathcal{B}$ on $\mathcal{X}$
such that every element $(X,B)\in \mathcal{C}$
is isomorphic to 
$(\mathcal{X}_t,\mathcal{B}_t)$ for some closed point $t\in T$.
If we consider a class of varieties instead of pairs,
then we simply say that the class of varieties is \defi{bounded}.
}
\end{definition}

Many classes of varieties or log pairs 
satisfy a boundedness condition when 
certain invariants are fixed.
However, this is not the case for singularities.
Even if we fix many invariants for a class of singularities, it is likely that the resulting class is not bounded. 
This happens because, unlike projective varieties, the versal deformation space of singularities tends to be infinite dimensional
and many singularities in the versal deformation space will share the same invariants as the central fiber.
In order to fix this issue, we use the following definition.

\begin{definition}\label{def:bounded-deg}
{\em 
Let $\mathcal{C}$ be a class of singularities.
We say that $\mathcal{C}$ 
is \defi{bounded up to degeneration}
if the following condition is satisfied. 
There exists a bounded class $\mathcal{B}$ of singularities 
such that for every element $(X;x)\in \mathcal{C}$ there exists a flat family 
$\mathcal{X}\rightarrow C\ni \{0\}$ 
of singularities for which
$(\mathcal{X}_c;x_c)\simeq (X;x)$ for some $c\in C$ and $(\mathcal{X}_0;x_0)\in \mathcal{B}$.
}
\end{definition}

In other words, we say that a class of singularities is bounded up to degeneration
if the elements of this class
are deformations of singularities
in a bounded class.

\subsection{A smoothness lemma}
We conclude the preliminaries with a smoothness lemma that will be used to construct examples in \Cref{sec:ex}. In particular, certain complete intersections of Fermat hypersurfaces are smooth.  The notation $p_k = p_k(x_0,\ldots,x_N) \coloneqq \sum_{i=0}^N x_i^k$ denotes the $k$-th power sum equation in $N+1$ variables. 

\begin{lemma}[\cite{RY00}]
\label{lem:Fermat_ci_smooth}
For any positive integers $m \leq N-1$, the intersection of Fermat hypersurfaces
$$X \coloneqq \{p_1(x_0,\ldots,x_N) = p_2(x_0, \ldots, x_N) = \cdots = p_m(x_0,\ldots,x_N)\} \subset \pp^N$$
is smooth and irreducible of dimension $N-m$.
\end{lemma}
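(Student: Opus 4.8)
The plan is to verify smoothness through the Jacobian criterion applied to the affine cone $\hat{X} \subset \A^{N+1}$ cut out by $p_1, \ldots, p_m$, and then to deduce irreducibility from the connectedness of positive-dimensional complete intersections. Since $X$ is defined by $m$ equations, every component of $\hat X$ has dimension at least $N+1-m$, so it suffices to show that the Jacobian of $(p_1,\ldots,p_m)$ has full rank $m$ at every nonzero point of $\hat X$.

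First I would compute the Jacobian, whose $(k,i)$-entry is $\partial p_k / \partial x_i = k\, x_i^{k-1}$. Up to the nonzero scalar factors $1,2,\ldots,m$ on the rows (invertible over $\C$), this is the $m \times (N+1)$ matrix whose $i$-th column is the Vandermonde vector $(1, x_i, x_i^2, \ldots, x_i^{m-1})^{\top}$. Two columns agree iff the corresponding coordinates are equal, and any $m$ columns attached to distinct coordinate values form a nonsingular $m \times m$ Vandermonde minor. Hence at a point $[x_0 : \cdots : x_N]$ the Jacobian has maximal rank $m$ precisely when the coordinates take at least $m$ distinct values.

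The crux of the argument, and the step I expect to be the main obstacle, is ruling out points of $X$ whose coordinates take at most $m-1$ distinct values. Suppose a point of $X$ had distinct coordinate values $a_1, \ldots, a_r$ with $r \leq m-1$, where $a_s$ occurs with multiplicity $\mu_s \geq 1$ and $\sum_s \mu_s = N+1$. The vanishing $p_1 = \cdots = p_m = 0$ becomes the system of linear relations $\sum_{s=1}^r \mu_s a_s^k = 0$ for $k = 1, \ldots, m$. If the value $0$ occurs among the $a_s$ it drops out of every power sum (and cannot be the only value, as the origin is not a projective point), so we may assume the remaining $a_s$ are nonzero; restricting to $k = 1, \ldots, r$ then gives a square Vandermonde-type system whose coefficient determinant $\prod_s a_s \prod_{s<t}(a_t - a_s)$ is nonzero. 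This forces every $\mu_s = 0$, contradicting $\mu_s \geq 1$. Consequently every point of $X$ has at least $m$ distinct coordinate values, the Jacobian has rank $m$ on all of $\hat X \setminus \{0\}$, and $X$ is smooth of the expected dimension $N-m$; in particular $X$ is a complete intersection.

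Finally, since $m \leq N-1$ the dimension $N - m \geq 1$ is positive, so $X$ is a smooth complete intersection of positive dimension in $\pp^N$. Such complete intersections are connected, and a smooth connected variety is irreducible, which gives the irreducibility claim and completes the proof. The only genuinely delicate point is the third paragraph, and there the essential input is the \emph{integrality} (in particular positivity) of the multiplicities $\mu_s$, which is what turns the non-vanishing of the Vandermonde determinant into the desired contradiction.
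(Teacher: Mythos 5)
Your proof is correct, but it takes a genuinely different route from the paper's. The paper does not argue directly at all: it passes to the affine cone $C_X \subset \A^{N+1}$ and cites \cite{RY00} twice, using their Lemma 9.4 for irreducibility of the cone (of dimension $N-m+1$) and their Lemma 9.3 for smoothness of $C_X \setminus \{0\}$, from which smoothness and irreducibility of $X$ follow immediately. You instead give a self-contained argument: smoothness via the Jacobian criterion on the cone, exploiting the Vandermonde structure of the matrix $\bigl(k\,x_i^{k-1}\bigr)$ so that full rank $m$ is equivalent to the coordinates taking at least $m$ distinct values, and then the power-sum relations $\sum_s \mu_s a_s^k = 0$ with \emph{positive integer} multiplicities $\mu_s$ to rule out points with fewer than $m$ distinct values --- this is exactly the right pivot, and your handling of a possible zero coordinate (it drops out of every $p_k$ and cannot be the only value) is correct, as is the determinant $\prod_s a_s \prod_{s<t}(a_t - a_s) \neq 0$. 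Your irreducibility step also differs in its key input: rather than quoting irreducibility of the cone, you deduce it from smoothness plus connectedness of a positive-dimensional complete intersection in $\pp^N$ (here $N-m \geq 1$ is where the hypothesis $m \leq N-1$ enters), which is standard, e.g., via $H^0(X,\mathcal{O}_X)=\C$ from the Koszul resolution; you should cite such a reference to make that step airtight. What your approach buys is transparency and independence from \cite{RY00}; what the paper's buys is brevity. Two cosmetic remarks: in the degenerate case $m=1$ the Vandermonde discussion trivializes but your argument still goes through, and since the components of the affine cone are themselves cones through the origin, your rank computation at nonzero points does force every component to have dimension exactly $N+1-m$, justifying the phrase ``smooth of the expected dimension.''
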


\begin{proof}
This follows directly from results in \cite{RY00}.  Indeed, the affine cone $C_X$ over the variety $X$ is the subvariety in $\mathbb{A}^{N+1}$ cut out by the same equations.  For $m \leq N-1$, \cite[Lemma 9.4]{RY00} shows that $C_X$ is irreducible of dimension $N-m+1$, hence $X$ is irreducible of the indicated dimension.  Then, \cite[Lemma 9.3]{RY00} shows that $C_X \setminus \{0\}$ is smooth, so $X$ is smooth as well.
\end{proof}

\section{Bounds for symmetric actions}\label{sec:fano-bound}

In this section, we study upper bounds
for symmetric actions on Fano varieties and klt singularities.
First, we show an explicit quadratic upper bound for $k$ 
where $S_k$ is a symmetric group acting
faithfully on an $n$-dimensional Fano variety.

\begin{proposition}\label{prop:ln-bound}
For any integer \(n\geq 1\), let \(p_n\) be the smallest prime greater than \(n+1\). There exists an integer \(m(n)< p_n(n+1)\) such that for any \(n\)-dimensional rationally connected variety \(X\) over a field of characteristic \(0\) and embedding \(S_k\hookrightarrow\Bir X\), we have \(k\leq m(n)\). In particular, for \(n\gg 0\),
\[
m(n)<\left( 1 + \frac{1}{5000\ln^2(n+1)}\right)(n+1)^2.
\] 
\end{proposition}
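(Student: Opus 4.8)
The plan is to bound the order of a symmetric group $S_k$ acting on an $n$-dimensional rationally connected variety $X$ by exploiting the Jordan property together with a careful analysis of $p$-subgroups, following J.~Xu's work \cite{Xu-p-groups}. The key idea is that $S_k$ contains large $p$-subgroups, and a $p$-group acting on $X$ must have bounded rank. First I would recall that by \cite{Xu-p-groups}, for any prime $p$ and any $n$-dimensional rationally connected variety $X$ over a field of characteristic zero, every finite $p$-subgroup of $\Bir X$ has rank at most $n$ (or a bound depending only on $n$, with the precise constant coming from Xu's theorem). This gives a uniform constraint: if $S_k \hookrightarrow \Bir X$, then every Sylow $p$-subgroup of $S_k$ must embed into $\Bir X$ and hence have bounded rank.

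The main computational step is to translate this rank bound into a bound on $k$ via the structure of Sylow $p$-subgroups of $S_k$. For a prime $p$, the Sylow $p$-subgroup of $S_k$ is an iterated wreath product of cyclic groups $\zz/p$, and its rank grows roughly like $k/p$ (more precisely, writing $k$ in base $p$ determines the isomorphism type). The plan is to choose $p = p_n$, the smallest prime exceeding $n+1$; then the constraint that the Sylow $p_n$-subgroup of $S_k$ has rank at most $n$ forces $k < p_n(n+1)$. Concretely, if $k \geq p_n(n+1)$, then the base-$p_n$ expansion of $k$ guarantees a Sylow $p_n$-subgroup whose rank exceeds $n$, contradicting Xu's bound. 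This yields the desired $m(n) < p_n(n+1)$.

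The asymptotic refinement then follows from effective prime gap estimates. The plan is to invoke a result on the distribution of primes—such as the Baker--Harman--Pintz bound or an explicit version of Bertrand's postulate with the refined exponent—to control how much $p_n$ can exceed $n+1$. Since $p_n$ is the smallest prime greater than $n+1$, one has $p_n = (n+1) + O((n+1)^{\theta})$ for some $\theta < 1$; combined with $m(n) < p_n(n+1)$, this gives $m(n) < (n+1)^2 + O((n+1)^{1+\theta})$, and the stated explicit form with the $1/(5000\ln^2(n+1))$ error term comes from plugging in a sufficiently strong known prime-gap estimate and bounding the resulting expression for $n \gg 0$.

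The hard part will be twofold. First, extracting the precise rank bound from Xu's theorem in the exact form needed—I expect one must be careful that the bound is on the rank (minimal number of generators) of the $p$-group rather than its order, and that it holds uniformly for all primes, since the argument crucially chooses $p$ close to $n+1$. Second, the prime-gap input must be strong enough: a naive application of Bertrand's postulate only gives $p_n < 2(n+1)$, which yields $m(n) < 2(n+1)^2$, far weaker than the claimed $(1 + o(1))(n+1)^2$. Obtaining the sharp leading constant requires genuinely using that prime gaps are subpolynomially small relative to $n$, so the delicate step is verifying that the chosen prime-gap estimate produces an error term dominated by $(n+1)^2/(5000\ln^2(n+1))$ for all sufficiently large $n$.
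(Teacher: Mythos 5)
Your overall route is the same as the paper's (Xu's theorem on $p$-subgroups of $\Bir X$ with $p=p_n$ the smallest prime exceeding $n+1$, the structure of Sylow subgroups of $S_k$, and an explicit prime-gap estimate), but there is a genuine gap at the central deduction. The Main Theorem of \cite{Xu-p-groups} gives \emph{two} constraints for $p>n+1$: every finite $p$-subgroup of $\Bir X$ is \emph{abelian} and is generated by at most $n$ elements. You keep only the rank bound, and you explicitly resolve the ambiguity by taking rank to mean the minimal number of generators of the Sylow subgroup. With that reading, your key claim --- that $k\geq p_n(n+1)$ forces the Sylow $p_n$-subgroup of $S_k$ to have rank exceeding $n$ --- is false. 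Take $k=p_n^2$, which satisfies $k\geq p_n(n+1)$ since $p_n\geq n+2$: the Sylow $p_n$-subgroup of $S_{p_n^2}$ is the wreath product $\zz/p_n\wr\zz/p_n$, which is generated by two elements, so its rank does not exceed $n$ for any $n\geq 2$. More generally, the iterated wreath product $W_p(i)$ (the Sylow $p$-subgroup of $S_{p^i}$) has Frattini quotient $(\zz/p)^i$ and hence rank $i$, so your heuristic ``rank grows roughly like $k/p$'' fails whenever $k$ has nonzero higher base-$p$ digits; the rank bound alone cannot rule out such $k$.

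The paper closes exactly this hole by using both conclusions of Xu's theorem: writing the Sylow $p_n$-subgroup of $S_k$ as a product of factors $W_{p_n}(i)$ according to the base-$p_n$ expansion of $k$ (following \cite{Rotman-groups}), one sees that $k\geq p_n(n+1)$ forces it to contain either $(\zz/p_n)^{\oplus(n+1)}$ as a direct factor (when only the digits $a_0,a_1$ are nonzero, so that $a_1\geq n+1$), violating the rank bound, or some non-abelian $W_{p_n}(i)$ with $i\geq 2$, violating abelianness. In fact, once you have the abelian-plus-rank statement, an even simpler repair of your argument works: $k\geq p_n(n+1)$ lets you place $n+1$ disjoint $p_n$-cycles inside $S_k$, so $(\zz/p_n)^{n+1}\hookrightarrow \Bir X$, an immediate contradiction with no Sylow structure theory needed. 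Your prime-gap step is sound in outline: as you note, Bertrand's postulate is too weak, and the paper uses Dusart's explicit result \cite{Dusart} (a prime in $\left(x, x\left(1+\frac{1}{5000\ln^2 x}\right)\right]$ for $x\geq 468991632$), which produces exactly the stated constant; Baker--Harman--Pintz would also suffice for the ``$n\gg 0$'' claim but is ineffective and unnecessary here.
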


\begin{proof}
    For a prime \(p\) and integer \(i\geq 1\), let \(W_p(i)\) denote the Sylow \(p\)-subgroups of the symmetric group \(S_{p^i}\). Note that \(W_p(1)=\mathbb Z/p\) and that \(W_p(i)\) is non-abelian for \(i\geq 2\) \cite[Theorem 7.27]{Rotman-groups}.
    
    Let \(p_n\) be the smallest prime greater than \(n+1\). If \(k\geq p_n(n+1)\), then the Sylow \(p_n\)-subgroups of \(S_k\) contain either \((\mathbb Z/p_n)^{\oplus (n+1)}\) or \(W_{p_n}(i)\) for some \(i\geq 2\) as a direct factor \cite[page 176]{Rotman-groups}. Then \(S_k\) contains a \(p_n\)-group that either has rank greater than \(n\) or is non-abelian, so by \cite[Main Theorem]{Xu-p-groups} there does not exist an embedding \(S_k\hookrightarrow\Bir X\). Thus, we have \(k<p_n(n+1)\). For \(n\geq 468991632\) we have \(p_n\leq \left( 1 + \frac{1}{5000\ln^2(n+1)}\right)(n+1)\) by \cite[Corollary 5.5]{Dusart}, so we conclude that \(m(n)<\left( 1 + \frac{1}{5000\ln^2(n+1)}\right)(n+1)^2\).
\end{proof}

Then, the proof of \Cref{introthm:quadratic-bound-Fano} follows.

\begin{proof}[Proof of \Cref{introthm:quadratic-bound-Fano}]
This follows by taking the limit in the statement of~\Cref{prop:ln-bound}.
\end{proof}

Now, we turn to symmetric actions on klt singularities.
We provide an upper bound for $k$,
where $S_k$ is a symmetric group acting faithfully on a $n$-dimensional klt singularity.

\begin{proof}[Proof of \Cref{introthm:quadratic-bound-klt}]
Let $(X;x)$ be an $n$-dimensional klt singularity
and $S_k$ a symmetric group acting on $(X;x)$.
Let $\pi\colon X\rightarrow Y$ be the quotient of $X$ by $S_k$.
Let $B_Y$ be the divisor with standard coefficients for which $\pi^*(K_Y+B_Y)=K_X$.
Then, the pair $(Y,B_Y;y)$ is klt, where $y = \pi(x)$.
Let $\varphi_Y\colon Y' \rightarrow Y$ be a projective birational morphism satisfying the following conditions:
\begin{itemize}
\item $\varphi_Y$ extracts a unique prime divisor $E'$ over $y$, 
\item the pair $(Y',E'+{\varphi_Y}^{-1}_*B_Y)$ has plt singularities, and 
\item the divisor $-(K_{Y'}+E'+{\varphi_Y}^{-1}_*B_Y)$ is ample over $Y$.
\end{itemize}
This projective birational morphism exists by~\cite[Lemma 1]{Xu14}.
Let $\varphi_X \colon X'\rightarrow X$ be the projective birational morphism obtained by base change
and $\pi'\colon X'\rightarrow Y'$ the corresponding quotient map.
Let $F$ be the reduced preimage of $E'$ on $X'$.
Then, the pair $(X',F)$ is plt and
$-(K_{X'}+F)$ is ample over $X$. 
By the connectedness of log canonical centers,
we can conclude that $F$ is prime.
Indeed, by contradiction, let $F=\sum_{i=1}^k F_i$
and assume that $k\geq 2$.
By~\cite[Connectedness Principle]{FS20}, we conclude that $F$ is connected over $X$ so 
there are two components $F_i$ and $F_j$ that intersect.
As an intersection of log canonical centers is a union of log canonical centers (see~\cite[Theorem 1.1.(ii)]{Amb11}), we are led to a contradiction of the fact that $(X',F)$ is plt.
Thus, $F$ is prime.
By construction, the projective birational morphism $X'\rightarrow X$ is $S_k$-equivariant.
Hence, $S_k$ fixes $F$.
By~\Cref{lem:normal+cyclic}, 
we conclude that $S_k$ acts faithfully on $F$.
Note that $F$ is a Fano type variety, 
so it is a rationally connected variety.
We conclude that $S_k$ acts on a rationally connected variety of dimension at most $n-1$.
Hence, the statement follows from \Cref{introthm:quadratic-bound-Fano}
by taking the limit.
\end{proof}

\section{Symmetries of toric varieties}\label{sec:toric}

In this section, we give an upper bound for symmetric actions on complete simplicial toric varieties:
\begin{theorem}\label{thm:toric}
    Let \(X\) be a complete simplicial toric variety of dimension \(n\). Suppose that the symmetric group \(S_k\) acts faithfully on \(X\).  If $n = 1,2,$ or $3$, then $k \leq n+3$; if $n \geq 4$, then $k \leq n+2$.

    These bounds are sharp for each $n$.  If equality is achieved and $n \neq 2,4$, then $X \cong \pp^n$. If $n = 2$, then $k = 5$ if and only if $X \cong \pp^1 \times \pp^1$.  If $n = 4$, then $k = 6$ if and only if $X \cong \pp^4$ or $X \cong \pp^2 \times \pp^2$.
\end{theorem}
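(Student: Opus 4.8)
The plan is to exploit the structure of automorphism groups of complete simplicial toric varieties. For such an $X$ with fan $\Sigma$ having rays $\rho_1,\ldots,\rho_d$, the group $\Aut(X)$ is a linear algebraic group whose component group is finite, and crucially the action of any finite subgroup $G\leqslant\Aut(X)$ permutes the torus-invariant data. The key structural input (due to Cox and Demazure) is that $\Aut(X)$ fits into an exact sequence involving the connected component $\Aut(X)^\circ$ and a finite group that injects into the symmetries of the fan, i.e.\ into $\Aut(N,\Sigma)$, the lattice automorphisms preserving $\Sigma$. I would first reduce the problem to understanding how $S_k$ can act on the combinatorial data: the rays, the class group $\Cl X\cong\Z^{d-n}\oplus(\text{torsion})$, and the primitive collections. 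Since $n=\dim X$ and $d$ is the number of rays, the relevant representation to track is the action of $S_k$ on $N_\R\cong\R^n$ coming from the induced action on the torus $T\subset\Aut(X)^\circ$, together with the permutation action on the set of rays.

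\textbf{The main dichotomy.}
The heart of the argument is a representation-theoretic bound. The group $S_k$ acts on the $n$-dimensional vector space $N_\R$ (the cocharacter lattice tensored with $\R$) via an $n$-dimensional real representation, because the maximal torus of $\Aut(X)^\circ$ has dimension $n$ and $S_k$ normalizes it up to the connected component. For $k\geq 7$, the smallest nontrivial irreducible representation of $S_k$ has dimension $k-1$ (this is exactly the content of \Cref{sym_group_reps} and \Cref{tab:S_k_reps}), so any faithful $n$-dimensional representation forces $n\geq k-1$, giving $k\leq n+1$ — already better than claimed. The subtlety is that the $S_k$-action need not be linear on $N_\R$ directly: it may act only projectively, or the faithful action may live on the permutation of rays rather than on $N_\R$ itself. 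So I would split into cases according to whether the induced map $S_k\to\GL(N_\R)$ (or its projectivization) is faithful. When it is faithful, the standard representation bound gives $k\leq n+2$ after accounting for the trivial/sign summands. When it is \emph{not} faithful, the kernel acts trivially on the torus but faithfully by permuting rays, which constrains $X$ to be a product or a projective space, and one analyzes the small exceptional dimensions $n\leq 3$ directly.

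\textbf{Sharpness and the classification of equality.}
For the sharpness and equality statements, I would exhibit the explicit actions: $S_{n+2}$ acts on $\pp^n$ via the standard representation (as already noted in the introduction), giving the lower bound $k\geq n+2$ for all $n$, and in dimensions $1,2,3$ one gets one extra via $S_{n+3}$ using the exceptional small-degree representations from \Cref{tab:S_k_reps} (e.g.\ $S_4$ on $\pp^1$, $S_5$ on $\pp^2$ or a del Pezzo, $S_6$ on $\pp^3$). The equality classification is the most delicate part. To show that equality forces $X\cong\pp^n$ (for $n\neq 2,4$), I would argue that the maximal $S_k$-action rigidifies the fan: the rays must form a single $S_k$-orbit of size $n+1$, and the only complete simplicial fan in $\R^n$ with $n+1$ rays permuted transitively and spanning is the fan of $\pp^n$. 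The exceptional cases $n=2$ (allowing $\pp^1\times\pp^1$ with its $S_5$ action permuting the four boundary divisors plus the factor swap) and $n=4$ (allowing $\pp^2\times\pp^2$, where $S_6$ acts through $S_6\to S_3\wr\Z/2$ type symmetries, using the coincidence that $S_6$ has an outer automorphism) require separate combinatorial verification of the possible primitive collections.

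\textbf{The expected main obstacle.}
The hard part will be the equality classification, specifically ruling out exotic fans in the boundary cases and correctly isolating $\pp^2\times\pp^2$ in dimension $4$. The dimension-$4$ coincidence is genuinely representation-theoretic: it relies on the special $6$-dimensional phenomenon for $S_6$ and the fact that $\pp^2\times\pp^2$ carries an $S_6$-action not visible for other products, so I expect to invoke the exceptional behavior of $S_6$ (its outer automorphism, and the $4$-dimensional representations recorded in \Cref{tab:S_k_reps}) carefully. The upper bound itself, by contrast, should follow cleanly from the representation-dimension counting once the reduction to $\GL(N_\R)$ is correctly set up.
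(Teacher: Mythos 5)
Your overall skeleton (Cox's description of $\Aut(X)$, a case split on how $S_k$ sits relative to $\Aut^0(X)$, minimal degrees of representations, equality via products of projective spaces) matches the paper, but the central technical step is wrong. There is no induced $n$-dimensional representation $S_k \to \GL(N_{\mathbb R})$: a finite subgroup of $\Aut^0(X)$ need not normalize any maximal torus, e.g.\ $S_{n+2} \leqslant \PGL_{n+1}(\C) = \Aut^0(\pp^n)$ via the standard representation does not, so ``the $S_k$-action on $N_{\mathbb R}$'' is undefined precisely in the main case. Your own computation exposes this: it yields $k \leq n+1$, contradicting the $S_{n+2}$-action on $\pp^n$, and the proposed repair (a case split on faithfulness of this nonexistent map) cannot be carried out. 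The correct linearization, as in the paper's \Cref{lem:aut-cox-ring}, is on the Cox ring: $\Aut^0(X) \cong \Aut_g(R)/G$ with $G$ central, so $\Gamma \in \{A_k, S_k\}$ inside $\Aut^0(X)$ only lifts to a \emph{central extension} acting linearly on the spaces $R'_{\alpha_i}$ of variables of equal degree, whose dimensions $d_i$ satisfy $\sum_i (d_i-1)\leq n$, hence $d_i \leq n+1$ (\Cref{lem:ray-partition}). This is where the extra $+1$ comes from ($d_i \geq k-1$ forces $k \leq n+2$, not $n+1$), and it obliges you to use the projective representation theory of the Schur covers $\tilde S_k$, $\tilde A_k$ (spin representations, and the triple cover $3\cdot A_6$ behind $A_6 \leqslant \PGL_3(\C)$), which your outline only glances at. An action on $N_{\Q}$ genuinely appears only in the component-group case $S_k \hookrightarrow \Aut(N,\Delta)/\prod_i S_{\Delta_i}$, and even there one must first prove that $S_k$ acts faithfully on the singleton degree classes before restricting to their span (\Cref{lem:S_k-act-fan}); that case gives $k \leq n+1$ and produces no equality examples.

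The second genuine gap is the mixed case $S_k \cap \Aut^0(X) = A_k$ with $S_k \not\leqslant \Aut^0(X)$ (cokernel $\zz/2$), which your sketch does not isolate: your description ``the kernel acts trivially on the torus but faithfully by permuting rays'' fits neither actual exceptional example. This case is exactly where $S_5$ on $\pp^1\times\pp^1$ and $S_6$ on $\pp^2\times\pp^2$ arise, and ruling out anything else requires the paper's \Cref{lem:switching_indices}: if $A_k \leqslant \Aut^0(X)$ but $S_k$ is not, there must be \emph{two} large partition pieces $\Delta_i$, proved by lifting a transposition into a reductive subgroup of $\widetilde{\Aut}(X)$ and manufacturing from it a new transposition inside $\Aut^0(X)$ that together with $A_k$ regenerates $S_k$ --- an argument with no counterpart in your proposal. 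There are also concrete factual errors in your sharpness discussion: $S_5$ does not act on $\pp^2$ (no faithful $3$-dimensional linear representation of $S_5$ or $\tilde S_5$, cf.\ \Cref{tab:S_k_reps}); the del Pezzo surfaces you invoke are not toric, so they are irrelevant here; in the equality case $S_{n+2} \leqslant \PGL_{n+1}(\C)$ does \emph{not} permute the $n+1$ rays of $\pp^n$, so ``rays form a single $S_k$-orbit'' is not the mechanism --- equality instead forces $\sum_i(d_i-1)=n$ and one concludes via \Cref{lem:ray-partition}.\eqref{part:di-prod-proj-spaces}; and for $\pp^2\times\pp^2$ the relevant input is the Valentiner embedding $A_6 \leqslant \PGL_3(\C)$ combined with the factor swap, not a homomorphism of $S_6$ to a group of the form $S_3 \wr \zz/2$ (which has order $72$ and admits no faithful image of $S_6$).
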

The idea of the proof of \Cref{thm:toric} is to use the structure of the automorphism group of a toric variety developed in \cite{Cox95}.  Briefly, symmetries of a toric variety $X$ come from symmetries of the Cox ring of $X$ or symmetries of the fan of $X$. The \(S_{n+2}\)-action on \(\pp^n\) exhibits an example of the first case (\Cref{ex:symm-projective-rep}). An example of the second case is the \(S_n\)-action on \(\prod_{i=1}^n \pp^1\) by permuting the factors. For \(X=\pp^1\times\pp^1\), the \(S_5\)-action is obtained from an \(A_5\)-action on \(\pp^1\) and a \(\zz/2\)-action exchanging the factors.
We first recall the results from \cite{Cox95} that we will need about automorphisms of a toric variety. For a general reference on toric varieties, see \cite{Ful93}.

Throughout this section, let \(X\) be a complete simplicial toric variety of dimension \(n\), defined by a fan \(\Delta\) in \(N=\mathbb Z^n\). Let \(M \coloneqq \Hom_{\mathbb Z}(N,\mathbb Z)\). Let \(T \coloneqq N\otimes_{\mathbb Z}\mathbb C^*\) be the torus acting on \(X\). We'll use \(\Delta(1)\) to denote the set of one-dimensional cones (rays) of \(\Delta\) and \(d = |\Delta(1)|\) for the total number of rays. The free abelian group \(\zz^{\Delta(1)}\) of \(T\)-invariant Weil divisors on \(X\) fits into an exact sequence
\begin{equation}\label{eqn:toric-cl-sequence}\xymatrix{ 1 \ar[r] & M \ar[r] & \zz^{\Delta(1)} \ar[r] & \Cl X \ar[r] & 1}\end{equation} where \(M\to\zz^{\Delta(1)}\) is defined by \(m\mapsto\sum_{\rho\in\Delta(1)}\langle m, n_\rho\rangle D_\rho\). In particular, \(\Cl X\) is a finitely-generated abelian group with \(\rank((\Cl X)_\Q) = d-n\). The \defi{degree} of an element of \(\zz^{\Delta(1)}\) is defined to be its class in \(\Cl X\).

The toric variety $X$ may be constructed as a geometric quotient $(\C^{\Delta(1)} \setminus Z)/G$, where $G$ is the algebraic group defined as $G \coloneqq \Hom_{\Z}(\Cl X,\C^*)$ and $Z$ is the exceptional set defined by the vanishing of a certain monomial ideal. The action of the group $G$ on $\C^{\Delta(1)}$ is induced by the quotient morphism $\Z^{\Delta(1)} \rightarrow \Cl X$.

The coordinate ring \[R \coloneqq \C[x_\rho \mid \rho\in\Delta(1)]\] of the space \(\C^{\Delta(1)}\) acquires a grading from this action by $\Cl X$.  The resulting graded ring, known as the \defi{Cox ring}, plays a major role in the study of toric varieties.  In particular, its structure is closely related to that of the automorphism group of $X$.

Before stating this connection, we'll introduce some more notation related to the Cox ring and that set of rays $\Delta(1)$ of the fan of $X$.  For each $\alpha \in \Cl X$, let $R_{\alpha}$ be the graded piece of $R$ of elements of degree $\alpha$; then \(R=\bigoplus_{\alpha_i} R_{\alpha_i}\). 

We'll pay particular attention to the graded pieces containing variables $x_{\rho}$ for $\rho \in \Delta(1)$.  Indeed, partition \(\Delta(1)\) into disjoint subsets \[\Delta(1)=\Delta_1\sqcup\cdots\sqcup\Delta_s,\] where each \(\Delta_i\) corresponds to a set of variables with the same degree \(\alpha_i\). For each \(\alpha_i\), one may write \(R_{\alpha_i}=R'_{\alpha_i}\oplus R''_{\alpha_i}\) where \(R'_{\alpha_i}\) is spanned by the monomials \(x_\rho\) for \(\rho\in\Delta_i\).

The dimension \(n\) of \(X\) constrains the possible values for the sizes of the $\Delta_i$ in the above partition:
\begin{lemma}\label{lem:ray-partition}
    Write \(d_i=|\Delta_i|\) and \(d=|\Delta(1)|=\sum_{i=1}^s d_i\). The following hold.
    \begin{enumerate}
        \item\label{part:bound-di} $\sum_{i = 1}^s (d_i - 1) \leq n$.  In particular, $d_i \leq n+1$ for each $i$.
        \item \label{part:di-prod-proj-spaces} If $\sum_{i = 1}^s (d_i - 1) = n$, then $X \cong \pp^{d_1-1} \times \cdots \times \pp^{d_s-1}$.
    \end{enumerate}
\end{lemma}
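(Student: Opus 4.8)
The plan is to prove both parts by analyzing the exact sequence \eqref{eqn:toric-cl-sequence} together with the perfect pairing between $M$ and $N$. For part \eqref{part:bound-di}, I would fix in each block $\Delta_i$ a base ray $\rho_{i,0}$. Since all rays in $\Delta_i$ have the same degree $\alpha_i \in \Cl X$, each difference $D_\rho - D_{\rho_{i,0}}$ (for $\rho \in \Delta_i \setminus \{\rho_{i,0}\}$) maps to $0$ in $\Cl X$, hence lies in the image of $M \to \zz^{\Delta(1)}$ by exactness of \eqref{eqn:toric-cl-sequence}. Ranging over all $i$ and all $\rho \in \Delta_i \setminus \{\rho_{i,0}\}$ produces $\sum_{i=1}^s (d_i - 1)$ such elements; they are supported on disjoint coordinate blocks of $\zz^{\Delta(1)}$ and are independent within each block, so they are linearly independent in the image of $M$, which has rank $n$. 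This forces $\sum_{i=1}^s (d_i - 1) \leq n$, and since each summand is nonnegative, $d_i \leq n+1$ for every $i$.

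For part \eqref{part:di-prod-proj-spaces}, I would exploit the equality case. When $\sum_i (d_i - 1) = n$, the elements above form a $\qq$-basis of $M_\qq$; let $m_{i,j} \in M$ be a preimage of $D_{\rho_{i,j}} - D_{\rho_{i,0}}$, so that $\langle m_{i,j}, n_\rho\rangle$ equals $+1$ if $\rho = \rho_{i,j}$, $-1$ if $\rho = \rho_{i,0}$, and $0$ otherwise. Writing $N_i \subseteq N_\qq$ for the span of $\{n_\rho : \rho \in \Delta_i\}$, a direct computation with these pairings shows $\dim N_i = d_i - 1$, that $N_\qq = \bigoplus_i N_i$, and that the unique (up to scale) linear relation among the rays of $\Delta_i$ is $\sum_{\rho \in \Delta_i} n_\rho = 0$: pairing any relation $\sum_\rho a_\rho n_\rho = 0$ against $m_{i,j}$ forces all coefficients $a_\rho$ to be equal. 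Thus the rays of each $\Delta_i$ are precisely the ray generators of a copy of $\pp^{d_i - 1}$ inside $N_i$.

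It remains to upgrade this to an integral and then a fan-theoretic statement. Let $L_i \subseteq N$ be the sublattice generated by $\{n_\rho : \rho \in \Delta_i\}$; since $\sum_{\rho \in \Delta_i} n_\rho = 0$, the generators $n_{\rho_{i,1}}, \ldots, n_{\rho_{i,d_i-1}}$ form a $\zz$-basis of $L_i$. The pairing matrix of $\{n_{\rho_{i,j}}\}_{j\geq 1}$ against $\{m_{i,j}\}$ is the identity, so these are dual $\qq$-bases; comparing the lattices $\bigoplus_i L_i \subseteq N$ and $\bigoplus_i \zz\langle m_{i,j}\rangle \subseteq M$ under the perfect pairing $M = N^\vee$ shows both inclusions are equalities, giving $N = \bigoplus_i L_i$ with each $L_i$ the lattice of $\pp^{d_i - 1}$. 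Finally, since every ray of $\Delta$ lies in one of the complementary subspaces $N_i$ and $X$ is complete, I would invoke the splitting of complete fans: every cone of $\Delta$ is the product of its intersections with the $N_i$, these intersections sweep out complete fans $\Delta_i$ in each $N_i$, and $\Delta = \prod_i \Delta_i$. As the only complete fan on the rays of $\Delta_i$ inside $N_i$ is the fan of $\pp^{d_i-1}$, we conclude $X \cong \pp^{d_1-1}\times\cdots\times\pp^{d_s-1}$.

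The main obstacle is this last step: passing from the purely combinatorial data (the ray decomposition and the relations $\sum_{\rho \in \Delta_i} n_\rho = 0$) to an isomorphism of fans. Knowing the rays and the ambient splitting $N = \bigoplus L_i$ does not by itself determine the fan, so the crux is justifying carefully that a complete simplicial fan all of whose rays lie in the complementary subspaces $N_i$ must be the product of its slices $\Delta_i$ --- in particular checking that each $\Delta_i$ is complete in $N_i$ and that every product cone $\prod_i \sigma_i$ with $\sigma_i \in \Delta_i$ actually occurs in $\Delta$. I would prove this splitting lemma by decomposing each cone through its rays and then using completeness (equivalently, the fact that $N_\rr = \bigsqcup_{\sigma} \relint \sigma$) to match relative-interior points of product cones with cones of $\Delta$.
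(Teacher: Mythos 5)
Your proposal is correct and is essentially the paper's own argument: the paper likewise deduces part \eqref{part:bound-di} from the exact sequence \eqref{eqn:toric-cl-sequence} (via $s \geq \rank((\Cl X)_{\Q}) = d-n$, which is just the dual formulation of your independence count for the differences $D_{\rho}-D_{\rho_{i,0}}$), and in the equality case it uses those same ray differences to split $M$, and dually $N$, into pieces $N_i$ in which the rays of each $\Delta_i$ satisfy $\sum_{\rho\in\Delta_i} n_\rho = 0$ and form the ray configuration of $\pp^{d_i-1}$. The only divergence is one of detail: the paper compresses what you rightly identify as the crux --- the integral identification $N=\bigoplus_i L_i$ and the splitting lemma showing that a complete fan with all rays in the complementary subspaces $N_i$ is the product of its slices --- into the single assertion that the fan generated by the rays of $\Delta_i$ in $N_i$ is ``clearly'' that of $\pp^{d_i-1}$, whereas your unimodular-pairing argument and relative-interior matching correctly supply exactly those omitted verifications.
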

\begin{proof}
    We have that $s \geq \mathrm{rank}((\Cl X)_{\Q}) = d-n$, so $d-s \leq n$, and part~\eqref{part:bound-di} follows from this inequality. For part~\eqref{part:di-prod-proj-spaces}, label the \(d\) rays on \(X\) as \(\rho_1,\ldots,\rho_d\). For each ray \(\rho_j\), let \(D_j\) be the corresponding torus-invariant divisor. For each fixed \(\Delta_i\) with \(d_i\geq 2\), consider the differences \(\{D_j-D_k \mid \rho_j \neq \rho_k \in \Delta_i\}\). Each such difference is in the kernel of the map to \(\Cl X\) in~\eqref{eqn:toric-cl-sequence}, so it is in the image of an element of $M$.  For each $\Delta_i$, there are $d_i - 1$ independent such differences, and the set \(\Sigma\) of differences across all $\Delta_i$ extends to a basis of $\Z^{\Delta(1)}$.  By the assumption, the \(\zz\)-span of \(\Sigma\) has rank \(n=\rank M\); thus, it is equal to the image of \(M\to\zz^{\Delta(1)}\).
    
    We claim that this implies $d_i \geq 2$ for all \(i\). Indeed, any \(m\in M\) has image \(\sum_{j=1}^d \langle m, n_{\rho_j}\rangle D_j\) which must be in the span of \(\Sigma\) by the conclusion of the last paragraph. Hence if \(d_i=1\) for some \(i\) and \(\Delta_i=\{\rho_j\}\), then $D_j$ does not belong to such a difference, so the ray \(\rho_j\) satisfies \(\langle m,n_{\rho_j}\rangle=0\) for all \(m\in M\). This would imply that the ray is $0$, which is impossible.

    For each $\Delta_i$, the sublattice of $\zz^{\Delta(1)}$ generated by $\{\rho \mid \rho \in \Delta_i\}$ intersects the image of $M$ in the $d_1-1$ rank sublattice of the image of $M$ generated by the ray differences.  The preimages $M_i$ of these sublattices in $M$ decompose $M$ as a direct sum, each component of which evaluates to zero identically on any rays not in the corresponding $\Delta_i$.  We also get a dual decomposition $N = \bigoplus_{i=1}^s N_i$.  The fan generated by the $d_i-1$ rays of $\Delta_i$ in $N_i$ is clearly that of $\pp^{d_i-1}$, so $X \cong \pp^{d_1-1} \times \cdots \times \pp^{d_s-1}$.
\end{proof}

To prove \Cref{thm:toric}, we will use the following results of Cox on automorphisms of simplicial toric varieties \cite{Cox95}.  These results realize the automorphism group of $X$ as a quotient of a group of automorphisms of the affine variety $\C^{\Delta(1)} \setminus Z$.  In particular, let $\widetilde{\Aut}^0(X)$ be the centralizer of the group $G$ in the automorphism group of $\C^{\Delta(1)} \setminus Z$, and  $\widetilde{\Aut}(X)$ be the normalizer.

\begin{theorem}[{\cite{Cox95}}]\label{lem:aut-cox-ring}
    Let \(X\) be a complete simplicial toric variety, and let \(R\) be the Cox ring of \(X\).  We denote by $\Aut_g (R)$ the group of automorphisms of this ring preserving the grading. Let \(G=\Hom_{\mathbb Z}(\Cl X,\mathbb C^*)\), and let \(\Aut(N,\Delta)\) be the group of lattice isomorphisms of \(N\) that preserve the fan \(\Delta\).
    \begin{enumerate}
        \item\label{item:G-graded-Auts} There is a natural isomorphism \(\widetilde{\Aut}^0(X) \cong \Aut_g (R)\).  In particular, \(G\) is in the center of \(\Aut_g (R)\).
        \item\label{item:levi-decomposition} \(\Aut_g (R)\) is isomorphic to the semidirect product \(U \rtimes G_s\), where \(U\) is the unipotent radical and \(G_s=\prod_{i=1}^s\GL(R'_{\alpha_i})\). In particular, any finite subgroup of \(\Aut_g (R)\) is conjugate to a subgroup of \(\prod_{i=1}^s \GL(R'_{\alpha_i})\).
        \item\label{item:Aut^0-vs-graded-Auts} The connected component of the identity in \(\Aut(X)\) is \(\Aut^0(X) \cong \Aut_g (R)/G\).
        \item\label{item:toric-aut-component-group} \(\Aut(N,\Delta) \hookrightarrow S_{\Delta(1)}\) and \[ \widetilde{\Aut} (X) / \widetilde{\Aut}^0(X) \cong \Aut(X) / \Aut^0(X)\cong \Aut(N,\Delta) / \prod_{i=1}^s S_{\Delta_i}.\] 
    \end{enumerate}
\end{theorem}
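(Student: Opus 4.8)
The plan is to derive all four statements from the results of \cite{Cox95}, organizing the argument around the single reduction that turns automorphisms of the quotient presentation $X=(\C^{\Delta(1)}\setminus Z)/G$ into graded automorphisms of the polynomial ring $R$. The first point I would record is that, since $\Delta$ is complete, the exceptional set $Z$ has codimension at least $2$ in $\C^{\Delta(1)}$; by normality of affine space every automorphism of $\C^{\Delta(1)}\setminus Z$ extends uniquely to $\C^{\Delta(1)}$, and hence to an algebra automorphism of $R$. Thus both the centralizer and the normalizer of $G$ may be computed inside $\Aut(R)$.

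For part~\eqref{item:G-graded-Auts}, the observation is that the $G$-action on $R$ is exactly the action encoding the $\Cl X$-grading, so that the $\alpha$-eigenspace of $G$ is $R_\alpha$. An algebra automorphism centralizes $G$ if and only if it preserves every eigenspace $R_\alpha$, i.e.\ if and only if it is graded; this identifies $\widetilde{\Aut}^0(X)$ with $\Aut_g(R)$, and since $G$ acts by a scalar on each $R_\alpha$ it is automatically central in $\Aut_g(R)$. Part~\eqref{item:Aut^0-vs-graded-Auts} is then a formal consequence of the quotient presentation: a graded automorphism descends to $X$, two graded automorphisms induce the same automorphism of $X$ precisely when they differ by an element of $G$, and every automorphism in the identity component arises this way, giving $\Aut^0(X)\cong \Aut_g(R)/G$. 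The same bookkeeping, applied to the normalizer, shows that $G$ is the kernel of $\widetilde{\Aut}(X)\to\Aut(X)$, so the two component-group descriptions $\widetilde{\Aut}(X)/\widetilde{\Aut}^0(X)$ and $\Aut(X)/\Aut^0(X)$ in part~\eqref{item:toric-aut-component-group} coincide.

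For part~\eqref{item:levi-decomposition}, I would describe a graded automorphism explicitly. Each variable $x_\rho$ with $\rho\in\Delta_i$ has degree $\alpha_i$, so its image lies in $R_{\alpha_i}=R'_{\alpha_i}\oplus R''_{\alpha_i}$; recording only the component in $R'_{\alpha_i}$ defines a homomorphism $\Aut_g(R)\to\prod_{i=1}^s\GL(R'_{\alpha_i})=G_s$. One checks this is well defined and surjective, with a splitting given by the linear (i.e.\ $R''$-free) transformations, and that its kernel $U$---the automorphisms fixing every linear part---is unipotent, using the filtration by degree. This yields $\Aut_g(R)\cong U\rtimes G_s$ with $U$ the unipotent radical and $G_s$ reductive. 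The conjugacy statement then follows from a standard fact in characteristic $0$: a finite subgroup meets the unipotent radical trivially, hence maps isomorphically to $G_s$ and is conjugate into it.

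The step I expect to be the main obstacle is part~\eqref{item:toric-aut-component-group}, matching the component group with fan combinatorics. A lattice automorphism preserving $\Delta$ permutes the rays, and since a ray is determined by its primitive generator this permutation determines the automorphism, giving the injection $\Aut(N,\Delta)\hookrightarrow S_{\Delta(1)}$. Each such symmetry permutes the variables of $R$ and normalizes $G$, so it lands in $\widetilde{\Aut}(X)$; I would then show that every coset of $\widetilde{\Aut}(X)/\widetilde{\Aut}^0(X)$ is represented this way, because this quotient injects into the automorphisms of the grading group $\Cl X$ permuting the degrees $\alpha_i$, and all such permutations are realized by fan symmetries. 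The delicate bookkeeping is to identify the kernel: a fan symmetry lies in $\widetilde{\Aut}^0(X)=\Aut_g(R)$ exactly when it preserves the grading, i.e.\ fixes each degree $\alpha_i$ and hence permutes rays only within each block $\Delta_i$, and such within-block permutations are realized inside $G_s$ by permutation matrices. This identifies the kernel with the subgroup $\prod_{i=1}^s S_{\Delta_i}$ of within-block symmetries, yielding $\Aut(X)/\Aut^0(X)\cong\Aut(N,\Delta)/\prod_{i=1}^s S_{\Delta_i}$. Verifying carefully that no further identifications occur, and that the within-block symmetries are genuinely realized in the identity component, is where most of the care is needed.
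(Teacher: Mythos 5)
Your proposal takes a genuinely different route from the paper: the paper proves this theorem purely by citation, matching each part to a specific result of Cox --- part (1) is \cite[Theorem 4.2.(iii)]{Cox95}, part (2) is \cite[Proposition 4.3.(iv)]{Cox95} together with the 2014 erratum, with the ``in particular'' deduced from the structure theory of algebraic groups (conjugacy into a maximal reductive subgroup, \cite[Proposition VIII.4.2]{Hochschild81}), part (3) is \cite[Corollary 4.7.(iii)]{Cox95}, and part (4) follows from Corollary 4.7.(v) and the proof of Theorem 4.2.(ii) there. You instead attempt to reprove Cox's theorems, and much of your sketch tracks his arguments correctly: the extension of automorphisms across the codimension-two set $Z$, the identification of the centralizer of $G$ with grading-preserving automorphisms via the eigenspace decomposition, and the linear-part homomorphism $\Aut_g(R)\to G_s$ with a Levi-type conjugacy statement for finite subgroups in characteristic $0$.

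However, the two surjectivity steps you gloss over are exactly where the real content lies, and one of them is argued incorrectly. First, in part (3), ``every automorphism in the identity component arises this way'' is not a formal consequence of the quotient presentation: it is the substance of Cox's Theorem 4.2.(ii), requiring that $\Aut^0(X)$ acts trivially on the discrete group $\Cl X$ and a multiplicatively compatible choice of isomorphisms $g^*\mathcal{O}_X(\alpha)\cong\mathcal{O}_X(\alpha)$ assembling into a graded ring automorphism; relatedly, you never verify that a graded automorphism of $R$ preserves $Z$, which is needed even to map $\Aut_g(R)$ into $\widetilde{\Aut}^0(X)$. Second, in part (4), your claim that every automorphism of $\Cl X$ permuting the degrees $\alpha_i$ is realized by a fan symmetry is false. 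For the Hirzebruch surface $\F_a$ with $a\geq 1$, the automorphism of $\Cl X\cong \Z^2$ fixing the fiber class $f$ and sending the section class $s$ to $s+af$ permutes the multiset of degrees of the variables, yet no element of $\Aut(N,\Delta)$ induces it: an easy computation shows no lattice automorphism preserving the fan swaps the rays $(0,1)$ and $(0,-1)$. So your injection into degree-permuting automorphisms of $\Cl X$ cannot be combined with surjectivity of fan symmetries to represent every coset. The correct argument (the one behind Cox's Theorem 4.2.(ii)) goes through maximal tori: an automorphism of $X$ carries the big torus $T$ to a maximal torus of $\Aut(X)$, which is conjugate to $T$ by an element of $\Aut^0(X)$, so modulo the identity component every automorphism normalizes $T$ and therefore induces a lattice automorphism preserving $\Delta$, i.e., an element of $\Aut(N,\Delta)$.
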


\begin{proof}
    Most of the statements are taken directly from \cite{Cox95}.  Part \eqref{item:G-graded-Auts} is \cite[Theorem 4.2.(iii)]{Cox95}. The assertions of \eqref{item:levi-decomposition} are contained in \cite[Proposition 4.3.(iv)]{Cox95,cox2014erratum}, except the ``in particular" in~\eqref{item:levi-decomposition}, which follows from the structure theory of Lie groups (see, e.g., \cite[Proposition VIII.4.2]{Hochschild81}). Part \eqref{item:Aut^0-vs-graded-Auts} is \cite[Corollary 4.7.(iii)]{Cox95}, and finally part~\eqref{item:toric-aut-component-group} follows from Corollary 4.7.(v) and the proof of Theorem 4.2.(ii) in \cite{Cox95}.
\end{proof}

Now we begin the proof of \Cref{thm:toric}. \Cref{lem:aut-cox-ring}\eqref{item:toric-aut-component-group} shows that an action on \(X\) decomposes into a part in \(\Aut^0(X)\) and an action on the fan. We will consider these two situations separately. First, we consider the case where \(A_k\) or \(S_k\) is a subgroup of \(\Aut^0(X)\).

\begin{lemma}\label{lem:A_k-Aut0}
    Let \(X\) be a complete simplicial toric variety of dimension \(n\), and let \(\Delta_1\sqcup\cdots\sqcup\Delta_s\) be the partition of \(\Delta(1)\) by degrees defined before \Cref{lem:ray-partition}. Let \(k\geq 5\) be an integer, and let \(\Gamma\) be the alternating group \(A_k\) or the symmetric group \(S_k\). If \(\Gamma\leqslant\Aut^0(X)\), then
    \[n \geq \begin{cases} 1 & \text{if }k=5, \\ 2 & \text{if }k=6, \\ 3 & \text{if }k=7, \\ k-2 & \text{if }k \geq 8\end{cases} \text{ if }\Gamma=A_k, \quad \quad n \geq \begin{cases} 3 & \text{if }k=5,6, \\ k-2 & \text{if }k \geq 7\end{cases} \text{ if }\Gamma=S_k.\]
    Furthermore, if equality holds then \(X\cong\pp^n\).
\end{lemma}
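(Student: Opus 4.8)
The goal is to bound the dimension $n$ of a complete simplicial toric variety $X$ that admits an action of $\Gamma \in \{A_k, S_k\}$ inside $\Aut^0(X)$, and to show equality forces $X \cong \pp^n$. The plan is to exploit the structure theorem \Cref{lem:aut-cox-ring}: by part~\eqref{item:levi-decomposition}, any finite subgroup of $\Aut_g(R)$ is conjugate into $\prod_{i=1}^s \GL(R'_{\alpha_i})$, where $R'_{\alpha_i}$ is the span of the variables $x_\rho$ with $\rho \in \Delta_i$ and $\dim R'_{\alpha_i} = d_i$. By part~\eqref{item:Aut^0-vs-graded-Auts}, $\Aut^0(X) \cong \Aut_g(R)/G$, and since $G$ is central (part~\eqref{item:G-graded-Auts}), a copy of $\Gamma$ in $\Aut^0(X)$ lifts to a projective representation of $\Gamma$ on each factor $R'_{\alpha_i}$, i.e.\ a genuine linear representation of a central extension $\tilde{\Gamma}$ (in the sense of Section~\ref{sec:rep_S_k}) of dimension $d_i$.

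\textbf{The main estimate.}

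Since $\Gamma$ acts faithfully on $X$ but the $\GL(R'_{\alpha_i})$ decomposition only captures a projective action on each block, I would first argue that $\Gamma$ (or a lift $\tilde\Gamma$) must act nontrivially, hence via a \emph{faithful} projective representation, on at least one block — otherwise the action would be trivial in $\Aut_g(R)/G$. Each block of size $d_i \geq 2$ then yields a faithful projective representation of $\Gamma$ of dimension $d_i$, and $\sum_{i=1}^s (d_i - 1) \leq n$ by \Cref{lem:ray-partition}\eqref{part:bound-di}. The crux is therefore to bound each $d_i$ from below by the minimal dimension of a faithful projective representation of $A_k$ or $S_k$. For $S_k$ this is the minimal faithful representation dimension of $S_k$ or its Schur cover $\tilde{S}_k$ recorded in \Cref{tab:S_k_reps}; for $A_k$ it is the data of \Cref{lem:irreps-Atilde_k} and \Cref{tab:A_k_reps}. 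For $k \geq 8$ the relevant minimal dimension is $k-1$ (the standard representation dominates the basic spin representation by the inequality $k - 1 \geq 2^{\lfloor (k-2)/2 \rfloor}$ noted in the proof of \Cref{lem:irreps-Atilde_k}), giving $d_i - 1 \geq k - 2$, hence $n \geq k-2$. The small cases $k = 5, 6, 7$ are read off directly from the tables — for instance $S_5$ and $S_6$ have minimal faithful (possibly projective) dimension $4$, forcing $d_i \geq 4$ and $n \geq 3$, while the projective $3$-dimensional representation of $\tilde{A}_4$-type covers explains the small-$k$ entries for $A_k$.

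\textbf{The equality case.}

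When equality $n = k-2$ (or the appropriate small-$k$ value) holds, the inequality $\sum_i (d_i - 1) \leq n$ must be an equality and there is a single block realizing the minimal dimension, so \Cref{lem:ray-partition}\eqref{part:di-prod-proj-spaces} applies and gives $X \cong \pp^{d_1 - 1} \times \cdots \times \pp^{d_s - 1}$. I would then argue that the minimality forces a single factor: any additional factor $\pp^{d_i - 1}$ with $d_i \geq 2$ would contribute at least $1$ to the sum $\sum (d_i-1)$ without being forced by the faithful representation, contradicting that the whole of $n$ is consumed by one minimal faithful block. Hence $s = 1$, $d_1 = n+1$, and $X \cong \pp^n$.

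\textbf{Anticipated obstacle.}

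The step I expect to be most delicate is controlling the passage between \emph{projective} representations on the blocks $R'_{\alpha_i}$ and the minimal faithful dimensions, since the relevant lower bound must be taken over all central extensions $\tilde\Gamma$ — including the genuinely smaller spin representations for $\tilde{A}_k$ at $k = 6, 7$ and the Schur covers $\tilde{S}_k$. One must check that even the smallest projective representation still has dimension forcing $d_i$ large enough; this is exactly why \Cref{sym_group_reps}, \Cref{lem:irreps-Atilde_k}, and the tables were assembled, and the argument reduces to invoking them case by case. A secondary subtlety is ensuring that the action of $\Gamma$ does not land entirely in the central subgroup $G$ or the unipotent radical $U$ (so that the induced representation on some block is genuinely faithful as a projective representation); this uses that $\Gamma$ is (almost) simple, so its image in the reductive quotient $G_s = \prod_i \GL(R'_{\alpha_i})$ cannot be trivial.
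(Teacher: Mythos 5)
Your proposal follows the paper's proof essentially step for step: both arguments lift \(\Gamma\leqslant\Aut^0(X)\cong\Aut_g(R)/G\) to a central extension \(H\leqslant\Aut_g(R)\) using that \(G\) is central, conjugate \(H\) into the reductive part \(\prod_{i=1}^s\GL(R'_{\alpha_i})\) by \Cref{lem:aut-cox-ring}.\eqref{item:levi-decomposition}, bound some \(d_i\) from below by the minimal degrees of faithful (projective) representations recorded in \Cref{tab:S_k_reps} and \Cref{tab:A_k_reps} (via \Cref{sym_group_reps} and \Cref{lem:irreps-Atilde_k}), and conclude with \Cref{lem:ray-partition}, which gives \(n\geq d_i-1\) and, in the equality case, \(X\cong\pp^n\) (your worry about extra factors is handled automatically: equality forces all other \(d_j=1\), and the proof of \Cref{lem:ray-partition}.\eqref{part:di-prod-proj-spaces} excludes singleton blocks, so \(s=1\)). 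The one spot where your wording is too quick --- and it is precisely the step the paper also leaves implicit --- is ``acts nontrivially, hence via a faithful projective representation, on at least one block, otherwise the action would be trivial'': for \(\Gamma=S_k\) a nontrivial block action may factor through the sign character (kernel \(A_k\)), and a projectively trivial action on \emph{every} block means acting by tuples of scalars, which need not lie in \(G\) and so need not be trivial in \(\Aut^0(X)\), but \emph{is} abelian; the correct patch, which your ``almost simple'' remark gestures at, is that if every block's projective kernel contained \(A_k\) then the image of the nonabelian simple group \(A_k\) in \(\Aut^0(X)\) would be abelian, a contradiction, so some block carries a projective representation of \(\Gamma\) whose kernel is a normal subgroup not containing \(A_k\), hence trivial, and the table bounds apply as you intend.
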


\begin{proof}
    
    By \Cref{lem:aut-cox-ring}\eqref{item:G-graded-Auts} and~\eqref{item:Aut^0-vs-graded-Auts}, \(G\) is contained in the center of \(\Aut_g (R)\), and we have an isomorphism \(\Aut^0(X)\cong \Aut_g (R)/G\). This induces a central extension \[\xymatrix{1\ar[r] & K \ar[r] & H \ar[r] & \Gamma \ar[r] & 1}\] with \(K\leqslant G\) and \(H\leqslant\Aut_g (R)\). If \(\Gamma = A_k\), then by \Cref{exmp:central-extn-A_k} we have \[ H \cong \begin{cases} \tilde{A}_k \text{ or } A_k & \text{if } k=5 \text{ or } k\geq 8, \\ \tilde{A}_k, 3\cdot A_k, 2\cdot A_k, \text{ or }A_k & \text{if } k=6,7.\end{cases}\] If \(\Gamma=S_k\), we have \(H\cong S_k\) or \(H\cong\tilde{S}_k\).

    By \Cref{lem:aut-cox-ring}.\eqref{item:levi-decomposition} we may assume \(H\) is contained in \(\prod_{i=1}^s \GL(R'_{\alpha_i})\). Projection onto each factor gives an induced representation \(H\to \GL(R'_{\alpha_i}) = \GL_{d_i}(\C)\). The composition \(H\hookrightarrow\Aut_g (R)\to\Aut^0(X)\) surjects onto \(\Gamma\), so using \Cref{tab:A_k_reps} and \Cref{tab:S_k_reps} we conclude that some \(1\leq i\leq s\) satisfies
    \[d_i \geq \begin{cases} 2 & \text{if }k=5, \\ 3 & \text{if }k=6, \\ 4 & \text{if }k=7, \\ k-1 & \text{if }k\geq 8, \end{cases} \text{ if }\Gamma=A_k, \quad \quad d_i \geq \begin{cases} 4 & \text{if }k=5,6, \\ k-1 & \text{if }k\geq 7, \end{cases} \text{ if }\Gamma=S_k.\]

    By \Cref{lem:ray-partition} we have \(n \geq d_i - 1\), and if equality holds then \(X\cong\pp^n\). This shows the lemma.
\end{proof}

Now we need to deal with the case $S_k \hookrightarrow \Aut(N,\Delta)/\prod_{i = 1}^s S_{\Delta_i}$.

\begin{lemma}\label{lem:S_k-act-fan}
    Let \(n\geq 2\) and \(k\) be integers, and let \(X\) be a complete simplicial toric variety of dimension \(n\). If \(S_k\leqslant\Aut(N,\Delta)/\prod_{i=1}^s S_{\Delta_i}\), then \(k\leq n+1\).
\end{lemma}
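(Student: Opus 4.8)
The plan is to use \Cref{lem:aut-cox-ring}\eqref{item:toric-aut-component-group} to convert the hypothesis into a faithful permutation action of $S_k$ on the set of degree blocks $\{\Delta_1,\dots,\Delta_s\}$, and then to turn a large such permutation action into a large linear representation on $N_\R$, where the standard representation forces $n$ to be large. First I would argue that the quotient $Q \coloneqq \Aut(N,\Delta)/\prod_{i=1}^s S_{\Delta_i}$ embeds into the symmetric group on the blocks. Every $\phi\in\Aut(N,\Delta)$ induces an automorphism $\bar\phi$ of $\Cl X$ compatible with \eqref{eqn:toric-cl-sequence}, and since $\phi$ sends $D_\rho\mapsto D_{\phi(\rho)}$ it carries the block of rays of degree $\alpha_i$ to the block of degree $\bar\phi(\alpha_i)$; hence $\Aut(N,\Delta)$ permutes the blocks, giving a homomorphism to the symmetric group on $\{\Delta_i\}$. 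Its kernel is the subgroup fixing each block setwise; because $\Aut(N,\Delta)\hookrightarrow S_{\Delta(1)}$ is determined by its permutation of the rays (the rays span $N_\R$ as $\Delta$ is complete) and $\prod_i S_{\Delta_i}\leqslant\Aut(N,\Delta)$ by \Cref{lem:aut-cox-ring}, this kernel is exactly $\prod_i S_{\Delta_i}$. Thus $S_k\hookrightarrow Q\hookrightarrow \operatorname{Sym}(\{\Delta_i\})$ acts faithfully on the blocks.

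Next I would extract a single good orbit. Assuming $k\geq 5$ (small cases treated at the end), the only normal subgroups of $S_k$ are $1,A_k,S_k$, so since the block action is faithful some orbit $O$ must carry a faithful $S_k$-action; as the minimal faithful permutation degree of $S_k$ is $k$, this gives $|O|\geq k$. All blocks in $O$ have a common size $e$. If $e\geq 2$, then these blocks alone give $\sum_{i}(d_i-1)\geq |O|(e-1)\geq k$, so \Cref{lem:ray-partition}\eqref{part:bound-di} yields $n\geq k$ and hence $k\leq n+1$. If instead $e=1$, I would lift $S_k$ to a subgroup $H\leqslant\Aut(N,\Delta)$; then $H$ permutes the $|O|\geq k$ distinct primitive generators $\{n_\rho\}_{\rho\in O}$ through the faithful transitive $S_k$-action on $O$, so their span $W\subseteq N_\R$ is an $S_k$-subrepresentation (the $H$-action on $W$ factors through the permutation $S_k$).

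The main difficulty is the representation-theoretic estimate $\dim W\geq k-1$ in the case $e=1$. The concern is that $W$ could a priori decompose into copies of the trivial and sign representations, the only irreducibles of dimension below $k-1$ for $k\geq 5$ by Rasala's result recalled in \Cref{sym_group_reps}, which would leave $\dim W$ small. I would rule this out by observing that on any sum of trivial and sign representations $S_k$ acts through a quotient of order at most $2$, so the $S_k$-orbit of any $n_\rho$ inside $W$ would have size at most $2$; this contradicts the fact that the $n_\rho$ for $\rho\in O$ form a single orbit of $\geq k\geq 5$ distinct vectors. Hence $W$ contains a nontrivial irreducible of dimension $\geq k-1$, so $n\geq\dim W\geq k-1$, giving $k\leq n+1$. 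Finally, for $k\leq 4$ the bound is immediate when $n\geq 3$ since $k\leq 4\leq n+1$; and when $n=2$ it follows from the fact that a finite subgroup of $\Aut(N,\Delta)\leqslant\GL_2(\Z)$ has order at most $12<24=|S_4|$, so $S_k\leqslant Q$ forces $k\leq 3=n+1$.
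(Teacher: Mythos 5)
Your proof is correct and takes essentially the same route as the paper: both convert the hypothesis via Theorem~\ref{lem:aut-cox-ring}\eqref{item:toric-aut-component-group} into a faithful permutation action of $S_k$ on the degree-blocks, dispose of blocks of size at least $2$ by the count in \Cref{lem:ray-partition}\eqref{part:bound-di}, and turn the singleton blocks into a linear $S_k$-representation on the span of the corresponding rays in $N_{\Q}$, of dimension at most $n$, forcing $k-1\leq n$ (the paper invokes the minimal faithful degrees in \Cref{tab:S_k_reps}, while your trivial-plus-sign orbit-size argument is an equivalent substitute, and your orbit decomposition plays the role of the paper's grouping $I_1\sqcup\cdots\sqcup I_{n+1}$ by block size). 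The one slip is the phrase ``lift $S_k$ to a subgroup $H\leqslant\Aut(N,\Delta)$''---such a splitting of the extension need not exist---but it is harmless: taking $H$ to be the preimage of $S_k$ suffices, since the kernel preserves each singleton block, hence fixes every primitive generator $n_\rho$ for $\rho\in O$ and acts trivially on $W$, which is exactly what your parenthetical ``factors through the permutation $S_k$'' asserts (the paper sidesteps this by defining the restriction homomorphism $\Aut(N,\Delta)/\prod_{i=1}^s S_{\Delta_i}\to\GL(V)$ directly).
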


\begin{proof}
    For each positive integer $m$, define $I_m \subset \{1,\ldots,s\}$ to be the set of indices $i$ for which $|\Delta_i| = m$. Then \Cref{lem:ray-partition}.\eqref{part:bound-di} implies that $\{1,\ldots,s\} = I_1 \sqcup \cdots \sqcup I_{n+1}$, since all higher $I_m$ must be empty.
    We first claim that the map $S_k \hookrightarrow \Aut(N,\Delta)/\prod_{i = 1}^s S_{\Delta_i}$ induces a natural embedding \[S_k \hookrightarrow S_{I_1} \times \cdots \times S_{I_{n+1}},\] where each $S_{I_m}$ is the symmetric group on partition pieces $\Delta_i$ of $\Delta(1)$ of size $m$.

    Indeed, an element of $\Aut(N,\Delta)$ is an automorphism of the lattice $N$ preserving the fan $\Delta$, so in particular it preserves the linear equivalence of rays in $\Delta$ (see \cite[page 26]{Cox95} for more details).  Hence every member of a collection $\Delta_i$ of linearly equivalent rays is sent to a member of a single collection $\Delta_j$; furthermore, $|\Delta_i| = |\Delta_j|$.  Therefore, mapping $\varphi \in \Aut(N,\Delta)$ to the assignments $i \mapsto j$ defines a group homomorphism $\Aut(N,\Delta) \rightarrow S_{I_1} \times \cdots \times S_{I_{n+1}}$. The kernel of this homomorphism is precisely the subgroup $\prod_{i = 1}^s S_{\Delta_i}$, so it descends to $\Aut(N,\Delta)/\prod_{i = 1}^s S_{\Delta_i} \hookrightarrow S_{I_1} \times \cdots \times S_{I_{n+1}}$.  Composing with the inclusion of the subgroup $S_k$ gives the desired embedding above.
    
    Therefore, $S_k$ acts on each set $I_1, \ldots, I_{n+1}$ of collections of linearly independent rays of a given size.  We assume by way of contradiction that $k \geq n+2$.  Each set $I_m$ for $m \geq 2$ has size at most $n$ by \Cref{lem:ray-partition}.\eqref{part:bound-di}, so \(S_k\) cannot act faithfully on any of these sets.  Therefore, the composite homomorphism
$$S_k \hookrightarrow S_{I_1} \times \cdots \times S_{I_{n+1}} \rightarrow S_{I_1}$$
with the projection onto the first factor must be an injection; that is, $S_k$ acts faithfully on size $1$ linear equivalence classes of rays.  We will use this fact to find a faithful $S_k$ representation of small dimension.

Let \(V\subset N_{\Q}\) be the \(\Q\)-vector space spanned by \(\{\rho\in\Delta_i \mid i\in I_1\}\). There is a restriction homomorphism \[\Aut(N,\Delta)/\prod_{i=1}^s S_{\Delta_i} \to \GL(V).\]
Indeed, for \(\varphi\in\Aut(N,\Delta)\), let \(\varphi_{\Q}\in\GL(N_{\Q})\) denote the extension of \(\varphi\) by scalars to the vector space \(N_{\Q}\). Then \(\varphi_{\Q}(V)=V\) because \(\varphi\) preserves the collection of rays with linear equivalence class of size \(1\). It follows that we have a natural restriction map $\Aut(N,\Delta) \rightarrow \GL(V)$.  Moreover, any element of $\prod_{i=1}^s S_{\Delta_i} \leqslant \Aut(N,\Delta)$ is sent to the identity transformation under this restriction, since it must fix every ray in a spanning set of $V$.

Thus, we have a homomorphism \(S_k\to\GL(V)\). Since we've already shown that the subgroup $S_k \leqslant \Aut(N,\Delta)/\prod_{i=1}^s S_{\Delta_i}$ acts faithfully on the collection of rays $\rho$ with index in $I_1$, the composite homomorphism $S_k \rightarrow \GL(V)$ must be injective.  This proves that $V$ is a faithful representation of $S_k$.  Since \(\dim_{\Q} V \leq \rank N = n\), we must have that $k \leq n+1$ by \Cref{tab:S_k_reps}. This contradicts the assumed bound on $k$.
\end{proof}

Finally, we'll consider the situation where a subgroup $S_k \leqslant \Aut(X)$ has the property $S_k \ \cap \Aut^0(X) = A_k$.

\begin{lemma} \label{lem:switching_indices}
         Let \(n\geq 2\) and \(k \geq 5\) be integers, and let \(X\) be a complete simplicial toric variety of dimension \(n\). Suppose that $S_k \leqslant \Aut(X)$ is a subgroup of automorphisms with the property that $S_k \cap \Aut^0(X) = A_k$, and $S_k $ is not a subgroup of $\Aut^0(X)$.  Then there must exist at least two distinct indices $i$ such that $d_i = |\Delta_i|$ satisfies
    \[d_i \geq \begin{cases} 2 & \text{if }k=5, \\ 3 & \text{if }k=6, \\ 4 & \text{if }k=7, \\ k-1 & \text{if }k\geq 8. \end{cases}\]
    \end{lemma}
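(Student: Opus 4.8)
The plan is to analyze the $S_k$-action through its interaction with the Levi decomposition from \Cref{lem:aut-cox-ring}\eqref{item:levi-decomposition}. Since $S_k \cap \Aut^0(X) = A_k$, the subgroup $A_k$ lies in $\Aut^0(X) \cong \Aut_g(R)/G$, so as in the proof of \Cref{lem:A_k-Aut0} it lifts to a central extension $H$ of $A_k$ sitting inside $\prod_{i=1}^s \GL(R'_{\alpha_i})$. The full group $S_k$, however, does not lie in $\Aut^0(X)$: the index-two subgroup $A_k$ acts via the graded automorphisms, while the remaining coset acts by combining a graded automorphism with a nontrivial element of $\Aut(N,\Delta)$ permuting the blocks $\Delta_i$. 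The key point I would exploit is that this block-permutation must be compatible with the $A_k$-representation structure on the graded pieces.

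First I would record, exactly as in \Cref{lem:A_k-Aut0}, that projecting $H$ onto each factor $\GL(R'_{\alpha_i}) = \GL_{d_i}(\C)$ gives a representation of the central extension of $A_k$, and that faithfulness of the $A_k$-action forces \emph{some} index $i$ with $d_i$ at least the value listed (using \Cref{tab:A_k_reps} and the minimal-degree bounds). The new content is producing a \emph{second} such index. I would argue by contradiction: suppose only one block $\Delta_{i_0}$ achieves the required dimension, so that on every other block the induced representation of $A_k$ is too small to be faithful and hence (for $k \geq 5$) factors through a representation of dimension below the threshold, i.e.\ is trivial or sign-type. Then the faithful part of the $A_k$-action is concentrated in the single factor $\GL(R'_{\alpha_{i_0}})$, making $\Delta_{i_0}$ canonically distinguished: it is the unique block carrying a faithful (standard-type) summand of the right dimension.

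Next I would bring in the odd permutations. Pick $g \in S_k \setminus A_k$; conjugation by $g$ preserves $A_k$ and hence preserves the isomorphism class of each block representation up to the permutation action on $\{1,\dots,s\}$ coming from the image of $g$ in $\Aut(N,\Delta)/\prod S_{\Delta_i} \hookrightarrow S_{I_1}\times\cdots\times S_{I_{n+1}}$ (the embedding established in \Cref{lem:S_k-act-fan}). Because $\Delta_{i_0}$ is the \emph{unique} block of its size carrying the faithful standard representation of $A_k$, the block permutation induced by $g$ must fix $i_0$. This means $g$ preserves the subspace $R'_{\alpha_{i_0}}$ and therefore acts on it by a linear transformation normalizing the image of $A_k$. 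Combined with the $A_k$-action, this would realize the whole of $S_k$ inside $\GL(R'_{\alpha_{i_0}})$ up to the central subgroup $G$, i.e.\ produce an embedding of $S_k$ (or a representation group $\tilde{S}_k$) into $\PGL_{d_{i_0}}(\C)$; but then $S_k$ would act through $\Aut^0(X)$ on that factor, contradicting the hypothesis that $S_k \not\leqslant \Aut^0(X)$. This contradiction forces at least two distinct blocks to achieve the stated dimension bound.

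The main obstacle I anticipate is the bookkeeping in the step ruling out a single distinguished block: I must ensure that ``the standard representation of $A_k$ appears in only one block'' genuinely pins down that block under the $g$-permutation, accounting for the possibility that $g$ acts nontrivially within the representation on $R'_{\alpha_{i_0}}$ by an outer-type twist (the standard representation of $A_k$ need not extend to $S_k$ in a unique way, and the sign-twisted standard representation has the same dimension). Handling this requires checking that whichever extension $g$ provides still lands $S_k$ inside a single $\GL(R'_{\alpha_{i_0}})$ modulo $G$, so that the desired contradiction with $S_k \not\leqslant \Aut^0(X)$ is reached; this is precisely where the distinction between $S_k \leqslant \Aut^0(X)$ and $S_k \cap \Aut^0(X) = A_k$ must be used carefully.
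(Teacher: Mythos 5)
Your plan tracks the paper's actual proof through its first half: the lifted central extension of \(A_k\) inside \(\prod_{i=1}^s\GL(R'_{\alpha_i})\), the forced faithful block via \Cref{tab:A_k_reps} as in \Cref{lem:A_k-Aut0}, the contradiction hypothesis that a single block \(\Delta_{i_0}\) meets the bound (so that the representations on all other blocks are trivial --- note \(A_k\) is perfect, so there is no ``sign-type'' option), and the observation that any \(g\in S_k\setminus A_k\) must fix \(\Delta_{i_0}\) (in the paper this is immediate: under the uniqueness assumption \(\Delta_{i_0}\) is the unique largest block, and \(\Aut(N,\Delta)\) preserves block sizes, so no representation-theoretic distinguishing is needed). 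However, your intermediate claim that ``\(g\) preserves the subspace \(R'_{\alpha_{i_0}}\) and therefore acts on it by a linear transformation'' is not automatic: a lift of \(g\) to \(\widetilde{\Aut}(X)\) generally has a unipotent component sending variables into \(R'_{\alpha_i}\oplus R''_{\alpha_i}\), so it need not preserve \(R'_{\alpha_{i_0}}\). The paper handles this by noting that the preimage \(H\) of \(S_k\) is reductive (an extension of a finite group by the multiplicative-type group \(G\)) and invoking \cite[Proposition VIII.4.2]{Hochschild81} to conjugate \(H\) into the reductive subgroup \(G_s'\) generated by \(G_s\) and the permutation automorphisms \(P_\varphi\); only after this conjugation does a lift \(\tilde\tau\) of a transposition factor as \(P_\varphi\circ h\) with \(h\in G_s\) and restrict linearly to the distinguished block.

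The decisive gap is your final step. From ``\(S_k\) lands inside \(\GL(R'_{\alpha_{i_0}})\) modulo \(G\)'' you conclude that \(S_k\) ``would act through \(\Aut^0(X)\) on that factor,'' contradicting \(S_k\not\leqslant\Aut^0(X)\). This is a non sequitur: the odd elements realizing that restricted linear action are still automorphisms carrying a nontrivial class in \(\Aut(N,\Delta)/\prod_{i=1}^s S_{\Delta_i}\) (they may permute the \emph{other} blocks nontrivially), so they do not lie in \(\Aut^0(X)\), and the abstract existence of a faithful projective representation of \(S_k\) in dimension \(d_{i_0}\geq k-1\) contradicts nothing. What the paper does --- and what your plan lacks --- is to manufacture a genuinely new element of \(\Aut^0(X)\): define \(\tilde\tau'\in G_s\) to act as \(\tilde\tau\) does on \(R'_{\alpha_{i_0}}\) and as the \emph{identity} on all other variables, and verify that its image \(\tau'\in\Aut^0(X)\), together with \(A_k\), generates a copy of \(S_k\). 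That verification is exactly where the triviality of the \(A_k\)-representations on the other blocks is used: \(\tilde\tau^{-1}\tilde\tau'\) is trivial on \(R'_{\alpha_{i_0}}\), hence \(\tau^{-1}\tau'\) commutes with \(A_k\), so \(\tau'\) conjugates \(A_k\) exactly as \(\tau\) does and \(\langle A_k,\tau'\rangle\cong S_k\leqslant\Aut^0(X)\), the desired contradiction. Your closing paragraph correctly senses trouble in this neighborhood (the outer-twist ambiguity in extending the standard representation), but the criterion you propose to check --- that the restriction lands in a single \(\GL(R'_{\alpha_{i_0}})\) modulo \(G\) --- cannot deliver membership of \(S_k\) in \(\Aut^0(X)\); the ``extend by the identity off the distinguished block'' construction is the missing idea.
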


    \begin{proof}
    Since we have $S_k \ \cap \ \Aut^0(X) = A_k \leqslant \Aut^0(X) = \Aut_g (R)/G$, as in \Cref{lem:A_k-Aut0}, we have representations of $\tilde{A}_k$ on the factors $\GL(R'_{\alpha_i})$ whose product is the reductive subgroup $G_s$ of \Cref{lem:aut-cox-ring}.\eqref{item:levi-decomposition}.  At least one of these must be faithful.  Therefore, the dimension $d_i = |\Delta_i| = \dim(R'_{\alpha_i})$ must satisfy the inequalities in the lemma for some $i$.

    We will assume that there exactly one index satisfying the inequalities of the lemma, and then derive a contradiction. We may assume this index is \(1\). Then the representation of $\tilde{A}_k$ on each $\GL(R'_{\alpha_i})$ is trivial for $i \geq 2$.  Next, consider the preimage $H$ of the entire $S_k \leqslant \Aut(X)$ inside $\widetilde{\Aut}(X)$, so that $S_k \cong H/G$.  The group $G$ is of multiplicative type, hence reductive, so $H$, being an extension of $S_k$ by $G$, is also reductive.  
    
    We saw in \Cref{lem:aut-cox-ring}.\eqref{item:levi-decomposition} that $\widetilde{\Aut}^0(X)$, the connected component of the identity in $\widetilde{\Aut}(X)$, contains the reductive subgroup $G_s = \prod_{i=1}^s \GL(R_{\alpha_i}')$.  One can find an analogous reductive subgroup of $\widetilde{\Aut}(X)$ as follows.  It was shown in \cite[page 27]{Cox95} that $\widetilde{\Aut}(X)$ is generated by $\widetilde{\Aut}^0(X)$ and elements of the form $P_{\varphi}$, where $\varphi \in \Aut(N,\Delta)$ is an automorphism of the fan.  The automorphism $P_{\varphi}$ is constructed on the level of $\C^{\Delta(1)}$ as the corresponding permutation matrix on rays; this automorphism then descends to the quotient $X = (\C^{\Delta(1)} \setminus Z)/G$ \cite[page 26]{Cox95}. The subgroup generated by $G_s$ and the $P_{\varphi}$ is a  reductive subgroup $G_s'$ of $\widetilde{\Aut}(X)$ with the property that $U G_s' = \widetilde{\Aut}(X)$. (Here $U$ is the unipotent radical of $\widetilde{\Aut}^0(X)$ from \Cref{lem:aut-cox-ring}.\eqref{item:levi-decomposition}; it is also the unipotent radical of $\widetilde{\Aut}(X)$.)  Thus, by \cite[Proposition VIII.4.2]{Hochschild81}, the group $H$ is conjugate to a subgroup of $G_s'$. We may therefore assume \(H\leqslant G_s'\).

    Now pick an transposition $\tau$ of order $2$ in $S_k \leqslant \Aut(X)$, so that $\tau$ and $A_k$ generate the subgroup $S_k$.  For a lift $\tilde{\tau} \in \widetilde{\Aut}(X)$ of $\tau$, we have by assumption that $\tilde{\tau} \in G_s'$.  Since $G_s$ is normal in $G_s'$, we may write $\tilde{\tau}$ as a composition $P_{\varphi} \circ h$, where $\varphi \in \Aut(N,\Delta)$ and $h \in G_s$.  By assumption, $\Delta_1$ is the unique largest piece of the partition, so the permutation on partition pieces that $\varphi$ induces must fix the piece $\Delta_1$.  After changing $\varphi$ by an element $\sigma$ of $\prod_{i=1}^s S_{\Delta_i}$ (the corresponding $P_{\sigma}$ is in $G_s$), we may even assume \(P_\varphi\) induces the identity permutation on $\Delta_1$.  Both $\tilde{\tau}$ and $h$ therefore act by the same linear transformation when restricted to the space $R'_{\alpha_1}$; we shall denote by $\tilde{\tau}'$ the automorphism in $G_s \leqslant \widetilde{\Aut}^0(X)$ that acts by this linear transformation in $R'_{\alpha_1}$ and is constant on all other $x_{\rho}$, $\rho \in \Delta(1)$.

    Let $\tau' \in \Aut(X)$ be the image of $\tilde{\tau}'$.  The point is now to show that $\tau'$ and $A_k$ generate a copy of $S_k$ just as $\tau$ and $A_k$ do, but this time inside of $\Aut^0(X)$.  Indeed, we have that $\tilde{\tau}^{-1} \tilde{\tau}'$ is trivial on $R'_{\alpha_1}$, so its image $\tau^{-1}\tau'$ in $\Aut(X)$ commutes with any $g \in A_k$.  This implies $(\tau')^{-1} g \tau' = \tau^{-1} g \tau \in A_k$ for any such $g$.  Therefore, the group $\Gamma$ generated by $\tau'$ and $A_k$ has order $2 \cdot |A_k| = k!$ and the action by $\tau'$ on the normal subgroup $A_k$ by conjugation is the same as that of $\tau$.  This shows $\Gamma$ has the same semidirect product structure as $S_k$ does, so $\Gamma \cong S_k$.  This contradicts the assumption that there is no embedding $S_k \hookrightarrow \Aut^0(X)$, completing the proof.
    \end{proof}

Putting the above results together, we can now prove \Cref{thm:toric}.
\begin{proof}[Proof of \Cref{thm:toric}]
For each dimension $n$, we may assume that $k$ is at least the upper bound given in the statement of \Cref{thm:toric} (if not, the conclusion holds automatically).  Under this assumption, we show that $k$ must in fact equal this bound and characterize the optimal examples.
    First, we deal with $n = 1$. A one-dimensional normal complete toric variety is isomorphic to $\pp^1$, so $X = \pp^1$ and $S_4 \leqslant \PGL_2(\C)$ is the largest symmetric action.

    From now on, we consider $n \geq 2$ so that we may assume $k \geq 5$.  Therefore, $A_k$ is simple. The cokernel of \(S_k\cap\Aut^0(X) \to S_k\leqslant\Aut(X)\) is either trivial, \(\zz/2\), or \(S_k\). 
    
    If the cokernel is trivial, then we have an embedding $S_k \hookrightarrow \Aut^0(X)$.  Using \Cref{lem:A_k-Aut0}, we may get a bound on $n$.  For $n = 2$, the lemma implies $k < 5$, contradicting the maximality assumption $k \geq 5$.  Therefore, no maximal symmetric actions on toric surfaces occur in the case of trivial cokernel.  For $n = 3$, any embedding $S_k \hookrightarrow \Aut^0(X)$ satisfies $k \leq 6$, and for $n \geq 4$ we must have $k \leq n+2$.  So, for all \(n\geq 2\), our assumption that $k$ is maximal means that the inequalities are equalities and $X \cong \pp^n$, again by \Cref{lem:A_k-Aut0}.  In particular, $\pp^n$ achieves the optimal bound in dimensions $n \geq 3$.

    If the cokernel of \(S_k\cap\Aut^0(X) \to S_k\leqslant\Aut(X)\) is all of $S_k$, then we have \(S_k\hookrightarrow \Aut(X)/\Aut^0(X) \cong \Aut(N,\Delta)/\prod_{i=1}^s S_{\Delta_i}\).  We claim that this case produces no maximally symmetric examples.  Indeed, \Cref{lem:S_k-act-fan} shows that for each $n$, $k \leq n+1$.  Either way, $k$ falls short of the maximum possible value laid out in \Cref{thm:toric}.
    
    Finally, we consider the case where the cokernel of \(S_k\cap\Aut^0(X) \to S_k\leqslant\Aut(X)\) is \(\zz/2\).  We can suppose without loss of generality that there is no embedding $S_k \hookrightarrow \Aut^0(X)$, or else we'd be back in the trivial cokernel case.  This situation is characterized by \Cref{lem:switching_indices}.
    
    Begin with the $n = 2$ case.  If $k \geq 6$, we'd have by \Cref{lem:switching_indices} that $d_1,d_2 \geq 3$, contradicting \Cref{lem:ray-partition}.\eqref{part:bound-di}.  This leaves only $k = 5$ to consider.  \Cref{lem:switching_indices} gives $d_1, d_2 \geq 2$, so in fact $d_1 = d_2 = 2$ or else we'd again contradict \Cref{lem:ray-partition}.\eqref{part:bound-di}.  Thus, $X \cong \pp^1 \times \pp^1$ by \Cref{lem:ray-partition}.\eqref{part:di-prod-proj-spaces}. On the other hand, we know that $S_5$ acts faithfully on the toric variety $\pp^1 \times \pp^1$ (see \Cref{ex:optimal_example} for $n = 2$).  Therefore, $\pp^1 \times \pp^1$ is the unique optimal example for $n = 2$.

    Now consider $n = 3$. If $k \geq 6$, then \Cref{lem:switching_indices} shows that we'd have (without loss of generality) $d_1, d_2 \geq 3$, contradicting \Cref{lem:ray-partition}.\eqref{part:bound-di}.  Therefore, we get no new maximal examples.

    For $n = 4$, $k \geq 7$, we'd have $d_1,d_2 \geq 4$, once again a contradiction.  The remaining possibility is $k = 6$, where we need $d_1 = d_2 = 3$.  This implies $X \cong \pp^2 \times \pp^2$ by \Cref{lem:ray-partition}\eqref{part:di-prod-proj-spaces}.
    Conversely, we claim that $\Aut(\pp^2 \times \pp^2) \cong \PGL_3 \wr \ \zz/2$ \cite[Theorem 1]{LL21} contains a copy of $S_6$. This is because $A_6 \leq \PGL_3(\C)$ and $S_6$ is a semidirect product of $A_6$ and $\zz/2$.  This semidirect product is a subgroup in the wreath product generated by a twisted diagonal embedding of $A_6$ and the transposition of factors. Therefore, $\pp^2 \times \pp^2$ is another optimal example for $n = 4$.

    For $n = 5$, the assumption $k \geq 7$ means $d_1,d_2 \geq 4$, a contradiction.  Finally, when $n \geq 6$, we can assume $k \geq n+2$ so we'd have $d_1,d_2 \geq n+1$ so $\sum_{i=1}^s (d_i-1) \geq 2n > n$.  In summary, no maximal examples can occur in this case for $n \geq 5$.
\end{proof}

\section{Symmetries of weighted complete intersections}
\label{sect:wci}

In this section, we find the largest symmetric group which can act on a Fano or Calabi--Yau variety which is a quasismooth weighted complete intersection of dimension $n$. We'll first review a few key definitions.

We say that a weighted projective space $\pp \coloneqq \pp(a_0,\ldots,a_N)$ is \defi{well-formed} if $\gcd(a_0,\ldots, \hat{a}_i,\ldots,a_N) = 1$ for all $1 \leq i \leq N$.  A subvariety $X$ of $\pp$ is \defi{well-formed} if $\pp$ is well-formed and
$$\dim X - \dim(X \cap \mathrm{Sing}(\pp)) \geq 2,$$
where by convention the empty set has dimension $-1$.  The subvariety $X$ is \defi{quasismooth} if its preimage in $\mathbb{A}^{N+1} \setminus \{0\}$ is smooth.  We'll always work with quasismooth weighted complete intersections throughout this paper.
For a thorough introduction to weighted complete intersections, see \cite{Iano-Fletcher}.

The main theorem of this section is as follows.

\begin{theorem}
\label{thm:max_symmetric_action}
Let $X$ be a quasismooth weighted complete intersection of dimension $n$.  Suppose that the symmetric group $S_k$ acts faithfully on $X$.  The following hold:
\begin{enumerate}
    \item\label{part:max_symmetric_action-Fano} If $X$ is Fano, then
    $$k \leq n + \left\lceil \frac{1 + \sqrt{8n + 9}}{2} \right\rceil.$$
    This bound is sharp for every $n$.
    \item If $X$ is Calabi--Yau, then
    $$k \leq n + \left\lfloor \frac{1 + \sqrt{8n+9}}{2}\right\rfloor + 1.$$
\end{enumerate}
\end{theorem}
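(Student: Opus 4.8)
The plan is to convert the statement into representation theory of a Schur cover $\tilde{S}_k$ and then into a degree count for the defining equations. First I would reduce to a linear action: for a quasismooth weighted complete intersection $X \subset \pp(a_0,\ldots,a_N)$ of dimension $n$, the automorphisms are induced by graded automorphisms of the Cox ring $\C[x_0,\ldots,x_N]$ outside a short list of exceptions, so a faithful $S_k$-action lifts to a linear representation of $\tilde{S}_k$ on the variable space $V=\langle x_0,\ldots,x_N\rangle$. This $V$ is graded by the weights, variables of a common weight spanning a subrepresentation, and $\dim V = N+1 = n+c+1$, where $c=N-n$ is the number of equations.

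Next I would determine the shape of $V$ and of the ideal. We may assume $k \geq c_{\mathrm{Fano}}(n)$, since otherwise there is nothing to prove. Then \Cref{sym_group_reps} forces every constituent of dimension at most $2n+2$ to be one of: the trivial and sign representations, the standard representation, or its twist by the sign character; of these only the last two are faithful, each of dimension $k-1$. Faithfulness of the action on $\pp(V)$ makes some weight space contain a standard representation, so $N+1 \geq k-1$; well-formedness together with the scarcity of small faithful constituents should then force all of the nontrivial variables to share a single weight, necessarily $1$, so that $X$ sits in an ordinary $\pp^N$ with $S_k$ permuting the $N+1$ coordinates. I expect the main obstacle to be exactly this structural reduction: a priori $\dim V = n+c+1$ may exceed $2n+2$, so one must first bound $c$ crudely before \Cref{sym_group_reps} applies to all of $V$, and then exclude mixed weights and, when $(n,k)=(4,8)$, the exceptional $8$-dimensional spin constituent of $\tilde{S}_8$.

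With $X \subset \pp^N$ and $S_k$ permuting the coordinates, the equations are the crux. Each generator $f_j$ spans a subrepresentation of the degree-$d_j$ part of $\C[x_0,\ldots,x_N]$ inside the $S_k$-stable ideal; since a codimension-$c$ complete intersection has exactly $c$ minimal generators and $c$ turns out to be far smaller than $k-1$, these subrepresentations are $1$-dimensional, i.e. the $f_j$ are semi-invariant. All sign-type semi-invariants are divisible by the Vandermonde and hence have degree at least $\binom{k}{2}$, far above the $d_j$, so each $f_j$ is a symmetric polynomial. As the $c$ equations must be algebraically independent and the symmetric polynomials of degree at most $D\leq k$ have transcendence degree $D$ (being generated by $p_1,\ldots,p_D$), we obtain $\sum_j d_j \geq 1+2+\cdots+c = \tfrac{c(c+1)}{2}$. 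Since all weights equal $1$ we have $\sum_i a_i = N+1 = n+c+1$, so the Fano inequality $\sum_i a_i > \sum_j d_j$ yields $\tfrac{c(c+1)}{2} < n+c+1$, that is $c(c-1) < 2n+2$; this gives $c \leq \lceil \tfrac{1+\sqrt{8n+9}}{2}\rceil - 1$ and hence $k=n+c+1 \leq c_{\mathrm{Fano}}(n)$. In the Calabi--Yau case the equality $\sum_i a_i = \sum_j d_j$ relaxes the strict inequality to $c(c-1) \leq 2n+2$, producing the extra unit in the floor-function bound of part (2).

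Finally I would establish sharpness of the Fano bound by exhibiting the extremal example: with $c = \lceil \tfrac{1+\sqrt{8n+9}}{2}\rceil - 1$, the Fano--Fermat variety $X = \{p_1 = p_2 = \cdots = p_c = 0\} \subset \pp^{n+c}$ is smooth and irreducible of dimension $n$ by \Cref{lem:Fermat_ci_smooth}, is Fano precisely because $\tfrac{c(c+1)}{2} < n+c+1$ by the choice of $c$, and carries a faithful $S_{n+c+1}=S_{c_{\mathrm{Fano}}(n)}$-action since each $p_j$ is symmetric. The dimensions $n<4$ excluded by \Cref{sym_group_reps} I would treat by hand, and I would check that the exceptional case $(n,k)=(4,8)$ produces no competing example: by \Cref{sym_group_reps} the lowest-degree invariants of the $8$-dimensional representation have degrees $2,8,8$, so the corresponding complete intersection in $\pp^7$ has $\sum_j d_j = 18 > 8 = \sum_i a_i$ and is therefore not Fano.
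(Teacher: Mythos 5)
Your overall strategy matches the paper's: lift the action to a linear $\tilde{S}_k$-representation on the Cox ring, use \Cref{sym_group_reps} to force a standard representation among variables of a common weight, conclude the equations are symmetric semi-invariants (with the Vandermonde excluded on degree grounds), and play a minimal-degree count for symmetric regular sequences against the Fano/Calabi--Yau condition; your transcendence-degree argument for $\sum_j d_j \geq \binom{c+1}{2}$ is a clean variant of the paper's inductive count via \Cref{lem:large_dim_wci_facts}.\eqref{part:wci-Sk-total-deg}, and your treatment of the $(n,k)=(4,8)$ spin case and the sharpness example agree with the paper.

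However, there is a genuine gap at the structural reduction, and the claim you lean on is actually false. Representation theory only controls the $k-1$ variables carrying the standard representation: the remaining variables are acted on trivially (or by sign) and may carry \emph{arbitrary} weights. Nothing forces them to disappear, and they genuinely occur: \Cref{ex:one_extra_weight} exhibits a maximally symmetric quasismooth Fano weighted complete intersection $X \subset \pp^9(1^{(9)},2)$ of dimension $5$ with a faithful $S_9$-action, where $c_{\mathrm{Fano}}(5)=9$ --- so well-formedness plus ``scarcity of small faithful constituents'' cannot force all variables into a single weight-$1$ block with $S_k$ permuting all $N+1$ coordinates of an ordinary $\pp^N$. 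Consequently your final inequality, which substitutes $\sum_i a_i = N+1 = n+c+1$, is unjustified: with extra weights present one only has $\sum_i a_i = (k-1)b + a_{k-1} + \cdots + a_N$, and the Fano condition $\sum_j d_j < \sum_i a_i$ no longer yields $c(c-1) < 2n+2$ without further input. The missing ingredient is the paper's \Cref{lem:total-degree}: one splits the equations, applies the symmetric-degree count only to the first $k-n-2$ of them, and absorbs each extra weight into a distinct remaining equation via the Iano--Fletcher inequality $d > a$ (\Cref{IF_lemma}.\eqref{part:IF_lemma-1}, which also underlies the codimension bound \Cref{lem:codimension_bound} that you correctly identified as necessary before \Cref{sym_group_reps} applies). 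With that lemma in place the extra weights cancel from both sides of the Fano inequality and the bound $\frac{(k-n-1)(k-n)}{2} < k$ follows; without it, your argument proves the theorem only for honest complete intersections in $\pp^N$, not for the weighted case the statement is about.
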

\Cref{ex:optimal_example} shows that \eqref{part:max_symmetric_action-Fano} is sharp in every dimension. In the Calabi--Yau case, the bound $k \leq 4$ given by (2) for $n = 1$ is not sharp because $S_3$ is the largest symmetric action on a smooth elliptic curve, by the proof of \Cref{prop:low_dim_wci} below.  It is unclear whether (2) is always sharp in higher dimensions (see \Cref{rem:CY_sharp}).

For succinctness, we'll use the following abbreviations for the functions above throughout the section.
\begin{align*}
    c_{\mathrm{Fano}}(n) & \coloneqq  n + \left\lceil \frac{1 + \sqrt{8n + 9}}{2} \right\rceil, &
    c_{\mathrm{CY}}(n) & \coloneqq n + \left\lfloor \frac{1 + \sqrt{8n+9}}{2}{}\right\rfloor + 1.
\end{align*}

\begin{remark}
    {Notice that these two functions satisfy $c_{\mathrm{CY}}(n) \geq c_{\mathrm{Fano}}(n)$, they  never differ by more than $1$, and they are equal unless the fractional expression is an integer.  It is also true that $c_{\mathrm{Fano}}(n-1)$ and $c_{\mathrm{CY}}(n-1)$ are both strictly smaller than $c_{\mathrm{Fano}}(n)$ for all $n$.  Since we expect \Cref{ex:optimal_example} to be a maximally symmetric Fano for each $n$, this suggests that the hypothesis of \Cref{thm:birat_boundedness} is likely to hold.  It also provides some evidence that the proof of \Cref{introthm:S8-4-fold-bounded} in~\Cref{sec:sym-bound} should extend to higher dimensions, because we expect that the maximal $S_k$ which can act on a Fano variety of dimension $n$ cannot act faithfully on either a Calabi--Yau or a Fano variety of dimension $n-1$.  This in turn is one of the key inductive steps to generalizing \Cref{introthm:S8-4-fold-bounded} (see the remarks before \Cref{quest:max-symm-bound}).}
\end{remark}

Throughout Sections~\ref{sect:wci} and~\ref{sect:max_sym_vars}, we'll use the following notation:

\begin{notation}
Let $X\coloneqq X_{d_1,\ldots,d_m} \subset \pp \coloneqq \pp(a_0,\ldots,a_N)$ be a quasismooth weighted complete intersection defined by $m$ weighted homogeneous equations $f_1, \ldots, f_m$, of degrees $d_1, \ldots, d_m$, respectively.  The dimension of $X$ is $n \coloneqq N - m$. Assume the symmetric group \(S_k\) acts faithfully on \(X\).
\end{notation}

We'll first deal with some low-dimensional cases that are known via other means, so that we may exclude them later.

\begin{proposition}
\label{prop:low_dim_wci}
Let $X$ be a quasismooth weighted complete intersection which is a) Fano of dimension $n \leq 3$, or b) Calabi--Yau of dimension $n \leq 2$.  Suppose $X$ has a faithful action of $S_k$.  Then the upper bounds in \Cref{thm:max_symmetric_action} hold.
\end{proposition}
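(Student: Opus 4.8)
The statement to prove, \Cref{prop:low_dim_wci}, asserts that for quasismooth weighted complete intersections $X$ that are either Fano of dimension at most $3$ or Calabi--Yau of dimension at most $2$, the bounds $k \leq c_{\mathrm{Fano}}(n)$ (resp.\ $k \leq c_{\mathrm{CY}}(n)$) from \Cref{thm:max_symmetric_action} hold. My plan is to simply compute the target bounds explicitly in each of these finitely many low-dimensional cases, and then invoke the relevant known classification results for symmetric group actions on the associated varieties.

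First I would tabulate the numerical values of the bounding functions. For the Fano cases we need $c_{\mathrm{Fano}}(1)$, $c_{\mathrm{Fano}}(2)$, and $c_{\mathrm{Fano}}(3)$, and for the Calabi--Yau cases $c_{\mathrm{CY}}(1)$ and $c_{\mathrm{CY}}(2)$. Evaluating $c_{\mathrm{Fano}}(n) = n + \lceil (1+\sqrt{8n+9})/2 \rceil$ gives $c_{\mathrm{Fano}}(1) = 4$, $c_{\mathrm{Fano}}(2) = 5$, and $c_{\mathrm{Fano}}(3) = 6$; similarly $c_{\mathrm{CY}}(n) = n + \lfloor (1+\sqrt{8n+9})/2 \rfloor + 1$ gives $c_{\mathrm{CY}}(1) = 4$ and $c_{\mathrm{CY}}(2) = 6$. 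So I must verify: a Fano curve ($n=1$, necessarily $\pp^1$) has $k \leq 4$; a Fano surface has $k \leq 5$; a Fano threefold has $k \leq 6$; an elliptic curve ($n=1$ Calabi--Yau) has $k \leq 4$; and a Calabi--Yau surface (a K3 or abelian surface) has $k \leq 6$.

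Next I would dispatch each case by citing the classical results already assembled in the paper's introduction and preliminaries. A one-dimensional quasismooth weighted complete intersection Fano is $\pp^1$, and $S_4 \leqslant \PGL_2(\C)$ is the largest symmetric action, so $k \leq 4$. For Fano surfaces, the work of Dolgachev--Iskovskikh \cite{DI09} shows the largest symmetric action is $S_5$, so $k \leq 5$. For Fano threefolds, one invokes the classification (via Prokhorov \cite{Prokhorov_space_Cremona} and the rationally connected framework \cite{BCDP-23}) to get that the largest symmetric action is $S_7$; but since here $X$ is additionally a \emph{quasismooth weighted complete intersection} rather than an arbitrary Fano threefold, I would need to check that such $X$ do not realize $S_7$, so that the weaker bound $k \leq 6$ in fact suffices—the worst that can happen is that I need to rule out $k = 7$ for this restricted class, which follows from the representation-theoretic constraints (the relevant smallest faithful representations in \Cref{tab:S_k_reps}). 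In the Calabi--Yau direction, $S_3$ is the largest symmetric group acting on a smooth elliptic curve \cite{Sil09}, well within the bound $k \leq 4$; and for Calabi--Yau surfaces, the Mukai--Fujiki theory \cite{Muk88,Fuj88} yields $S_6$ as the maximal symmetric action, matching $k \leq 6$ exactly.

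The main obstacle I anticipate is the bookkeeping in the Fano threefold case: the known sharp bound among \emph{all} Fano threefolds is $S_7$, which exceeds the target $c_{\mathrm{Fano}}(3) = 6$. So rather than the bound being automatic, I must confirm that the \emph{weighted complete intersection} hypothesis genuinely forces $k \leq 6$—i.e.\ that the $S_7$-Fano threefold is not a quasismooth weighted complete intersection. The cleanest route is to argue directly from the embedding dimension: an $S_7$-action would require the weighted projective space $\pp(a_0,\ldots,a_N)$ with $N - m = 3$ to carry a faithful projective $S_7$-representation on its coordinates compatible with quasismoothness, and the smallest faithful representations of $S_7$ and $\tilde{S}_7$ from \Cref{tab:S_k_reps} are too large to embed into the ambient space of a threefold weighted complete intersection. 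This representation-theoretic count is precisely the kind of argument developed in the main body of \Cref{sect:wci}, so the low-dimensional proposition is really a sanity check that the general machinery recovers the classically known values, and the threefold subtlety is the only place requiring genuine care.
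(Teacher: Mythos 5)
There is a genuine gap, and it starts with arithmetic: you compute $c_{\mathrm{Fano}}(3) = 6$, but $8\cdot 3 + 9 = 33$ and $\sqrt{33} \approx 5.745$, so $\lceil (1+\sqrt{33})/2 \rceil = 4$ and $c_{\mathrm{Fano}}(3) = 7$. This error is not cosmetic --- it sends your entire threefold argument in the wrong direction. You correctly observe that $S_7$ acts on a Fano threefold, and you then set out to show that no quasismooth weighted complete intersection Fano threefold carries a faithful $S_7$-action, ``the only place requiring genuine care.'' But that claim is false: the Fano--Fermat complete intersection $\{p_1 = p_2 = p_3 = 0\} \subset \pp^6$ (the symmetric sextic, i.e.\ \Cref{ex:optimal_example} with $n = 3$) is a \emph{smooth} complete intersection Fano threefold with a faithful $S_7$-action by permutation of coordinates. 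Your proposed representation-theoretic obstruction also does not exist: the smallest faithful representation of $S_7$ has dimension $k - 1 = 6$ (\Cref{tab:S_k_reps}), and the ambient $\pp^6$ has $N+1 = 7$ coordinates, which comfortably accommodates the permutation representation --- indeed this is exactly how the maximal examples in \Cref{sect:wci} arise. So the statement you would need in the threefold case is unprovable, and the ``sanity check'' framing breaks down precisely there.

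What the paper actually does for $n = 3$ is prove $k \leq 7$, not $k \leq 6$: it passes to a resolution of singularities of $X$, which is rationally connected, so that $S_k \leqslant \Bir(V)$ for some rationally connected threefold $V$, and then invokes Prokhorov's theorem \cite[Proposition 1.1]{Prokhorov_space_Cremona} that $S_k$ does not embed into $\Bir(V)$ for any such $V$ when $k \geq 8$; since $c_{\mathrm{Fano}}(3) = 7$, this is exactly the required bound. (The paper's printed proof contains its own typo, ``$c_{\mathrm{Fano}}(2) = 7$'' where $c_{\mathrm{Fano}}(3)$ is meant, which may have contributed to the confusion, but its logic is the $k \leq 7$ bound via Prokhorov.) Your remaining cases are fine and match the paper: $n = 1$ Fano via $X \cong \pp^1$ and $S_4 \leqslant \PGL_2(\C)$; $n = 2$ Fano via rationality and the Dolgachev--Iskovskikh classification giving $S_5 \leq 5 = c_{\mathrm{Fano}}(2)$; elliptic curves with maximal action $S_3 < 4 = c_{\mathrm{CY}}(1)$; and Calabi--Yau surfaces with $k \leq 6 = c_{\mathrm{CY}}(2)$ via Mukai--Fujiki (the paper fills in the details you elide: adjunction gives $K_X \cong \mathcal{O}_X$, hence canonical singularities, lifting the action to the minimal resolution, then splitting into the K3 case, handled by the symplectic classification, and the abelian case, handled by Fujiki). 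To repair your proposal, replace the target bound in the threefold case by $7$ and delete the attempted exclusion of $S_7$ in favor of the Prokhorov/rational-connectedness argument.
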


\begin{proof}
 We may always replace $X$ with a well-formed quasismooth complete intersection which is isomorphic \cite[Lemma 2.3]{PrSFano}, so assume $X$ is well-formed. We'll consider the statements for Fano and Calabi--Yau varieties separately. Begin with case~\eqref{part:max_symmetric_action-Fano}, where $X$ is Fano.

If the dimension of $X$ is $1$, then $X \cong \pp^1$ since it is a klt Fano variety, and it's well-known that $S_4$ is the largest symmetric group that embeds in $\PGL_2(\C)$ (see, e.g., \cite{beauville-PGL2}).  Since $c_{\mathrm{Fano}}(1) = 4$, this proves the theorem in this case.

When $n = 2$, $X$ is rational, so $S_k$ embeds in the Cremona group $\Cr(2)$.  The finite subgroups of $\Cr(2)$ have been classified (see \cite{DI09}); the largest symmetric group action that appears is by $S_5$, which again agrees with $c_{\mathrm{Fano}}(2) = 5$.

Finally, when $n = 3$, a resolution of singularities of $X$ is a rationally connected variety, so $S_k \leqslant \mathrm{Bir}(V)$ for $V$ some rationally connected threefold.  A result of Prokhorov shows that for $k \geq 8$, $S_k$ does not admit an embedding into $\mathrm{Bir}(V)$ for $V$ any rationally connected threefold \cite[Proposition 1.1]{Prokhorov_space_Cremona}. Since $c_{\mathrm{Fano}}(2) = 7$, this proves the bound for $n = 3$.

We next turn to the Calabi--Yau case.  If $\dim X = 1$, then $X$ is a smooth genus $1$ curve. Its automorphism group is a semidirect product of the automorphism group of an elliptic curve with the (abelian) group on translations of $X$.  Since the automorphism group of a complex elliptic curve is cyclic of order $2, 4,$ or $6$, the largest possible symmetric group action on $X$ is by $S_3$.  This $S_3$ is in fact achieved, for instance by the permutation of variables on the cubic curve $X = \{x^3 + y^3 + z^3 = 0 \} \subset \pp^2$. We have $3 < c_{\mathrm{CY}}(1) = 4$.

Let $n = 2$.  We have that $c_{\mathrm{CY}}(2) = 6$.  By the adjunction formula, $K_X \cong \mathcal{O}_X$.  Since $K_X$ is Cartier, $X$ has canonical singularities.  Suppose that $\tilde{X} \rightarrow X$ is a minimal resolution of singularities; then $\tilde{X}$ is an abelian surface or a smooth K3 surface and the action of $S_k$ lifts to $\tilde{X}$ \cite[Proposition 2.2]{ETWindex}.  If $\tilde{X}$ is an abelian surface, then $\Aut(\tilde{X})$ is a semidirect product of the (abelian) group of translations with the subgroup $\Aut(\tilde{X},0)$ which preserves the identity point.  When $k > c_{\mathrm{CY}}(2) = 6$, $A_k \leqslant S_k$ is simple, so it must embed in $\Aut(\tilde{X},0)$.  This is impossible by the classification of automorphism groups of complex tori of dimension $2$ \cite{Fuj88}.

If instead $\tilde{X}$ is a K3 surface, then the action of any finite subgroup $H \leqslant \Aut(\tilde{X})$ on the one-dimensional vector space $H^0(\tilde{X},K_{\tilde{X}}) \cong \C$ gives an exact sequence 
\[\xymatrix{
1\ar[r] &  H_{\rm symp}
\ar[r] &  H \ar[r] &  
\zz/m \ar[r] &  1, }
\]
where \(m\) is a positive integer and $H_{\rm symp}$ is the kernel of the representation, which acts by symplectic automorphisms \cite{Nikulin79}.  As above, if $S_k \leqslant \Aut(\tilde{X})$ for $k > 6$, we'd have a symplectic group of automorphisms on a K3 surface isomorphic to $A_k$.  This is impossible by the classification of finite symplectic actions on such surfaces \cite{Muk88}, proving the required inequality on $k$.
\end{proof}

Before proving \Cref{thm:max_symmetric_action} in higher dimensions, we show some lemmata that we will need in the proof.
The first reduction step is to show that we may assume that no defining equation $f_i$ of \(X\) contains a linear term, i.e., $x_j$ is not a monomial in $f_i$ for any $j,i$.
\begin{lemma}\label{lem:no-linear-term}
    Let \(X_{d_1,\ldots,d_m} \subset \mathbb P(a_0,\ldots,a_N)\) be a quasismooth weighted complete intersection of dimension at least $3$. Then there is a quasismooth, well-formed weighted complete intersection \(X'\subset\mathbb P(a'_0,\ldots,a'_{N'})\) that is isomorphic to \(X\) and such that none of the equations defining \(X'\) contains a linear term.
\end{lemma}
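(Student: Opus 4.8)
The plan is to remove linear terms one variable at a time by solving the offending equation for that variable and substituting, interleaving this with a well-forming step, and to force termination by tracking the number of homogeneous coordinates. First I would use \cite[Lemma 2.3]{PrSFano} to replace $X$ by an isomorphic well-formed quasismooth weighted complete intersection, so that I may assume $X$ is well-formed to begin with. Suppose now that some equation $f_i$ contains a linear term, i.e. the monomial $x_j$ occurs in $f_i$ with a nonzero coefficient $c$. Since $f_i$ is weighted homogeneous of degree $d_i$ and $\deg x_j = a_j$, the presence of $x_j$ forces $a_j = d_i$. Moreover any monomial of $f_i$ divisible by $x_j$ has the form $x_j\cdot \mu$ for a monomial $\mu$, of degree $a_j + \deg\mu = d_i = a_j$, whence $\deg\mu = 0$ and $\mu = 1$. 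Thus $x_j$ occurs in $f_i$ only through the term $c\,x_j$, and I may write $f_i = c\,x_j + g$ with $g$ weighted homogeneous of degree $a_j$ and independent of $x_j$.

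The technical heart is a single elimination step. On $X$ the equation $f_i = 0$ is equivalent to $x_j = -c^{-1}g$, so substituting this into the remaining equations $f_\ell$ with $\ell \neq i$ produces weighted homogeneous polynomials $\tilde f_\ell$ of the same degrees $d_\ell$ in the variables $\{x_r\}_{r\neq j}$. I would set $X' \subset \mathbb{P}(a_0,\ldots,\widehat{a_j},\ldots,a_N)$ to be the subscheme they define, of expected dimension $(N-1)-(m-1)=n$. To check $X\cong X'$, I pass to the affine cones $C_X \subset \mathbb{A}^{N+1}$ and consider the coordinate projection $\pi$ forgetting $x_j$. Since $x_j = -c^{-1}g$ holds identically on $C_X$, the restriction $\pi|_{C_X}\colon C_X \to C_{X'}$ is an isomorphism of affine varieties, with polynomial inverse $(x_r)_{r\neq j}\mapsto \big((x_r)_{r\neq j},\,-c^{-1}g\big)$. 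Because $\deg g = a_j$ and $g(0)=0$, this isomorphism is equivariant for the $\mathbb{C}^*$-actions with weights $(a_r)$ and $(a_r)_{r\neq j}$ and fixes the origin, so it descends to an isomorphism $X\cong X'$ of weighted projective varieties. In particular $C_{X'}\setminus\{0\}\cong C_X\setminus\{0\}$ is smooth, so $X'$ is again quasismooth; and since $C_{X'}$ has the expected dimension, the $\tilde f_\ell$ are nonzero and cut out a genuine complete intersection.

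Finally I would iterate, re-applying \cite[Lemma 2.3]{PrSFano} after each elimination to restore well-formedness. The subtle logical point to flag here is that re-well-forming can itself reintroduce a linear term, since a pure power of the coordinate coprime to a common weight factor may become a single new coordinate under the well-forming isomorphism; this is why the two operations must be interleaved rather than run in sequence. This causes no real difficulty, however, because well-forming of a weighted projective space preserves the number of coordinates while each elimination strictly decreases it, and every pass in which a linear term survives performs an elimination. As the number of coordinates is a nonnegative integer, the procedure terminates after finitely many steps at a well-formed quasismooth complete intersection isomorphic to $X$ whose defining equations contain no linear term. The hypothesis $\dim X \geq 3$ is used only to license the repeated application of \cite[Lemma 2.3]{PrSFano}. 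Thus the main obstacle is not termination, which is immediate from the coordinate-count monovariant, but rather the careful verification in the elimination step that the cone-level projection is an equivariant isomorphism preserving quasismoothness.
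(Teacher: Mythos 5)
Your proof is correct and follows essentially the same route as the paper's (which in turn follows \cite[Proposition 2.9]{PrSFano}): your substitution $x_j \mapsto -c^{-1}g$ is exactly the paper's change of variables $x_N - g \mapsto x_N$ followed by reduction of the other equations modulo $f_m = x_N$ and restriction to $\{x_N = 0\} \cong \pp(a_0,\ldots,a_{N-1})$, and both arguments then re-well-form via \cite[Lemma 2.3]{PrSFano} and iterate. Your additional details---the cone-level $\C^*$-equivariant isomorphism, the explicit observation that well-forming can reintroduce linear terms, and the coordinate-count termination argument---are careful elaborations of steps the paper leaves implicit, not a different method.
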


\begin{proof}
    The argument is the same as in \cite[Proposition 2.9]{PrSFano}. Indeed, suppose without loss of generality that $x_N$ is a monomial in $f_m$, so that $f_m = x_N - g$ for some polynomial $g$ depending only on the other variables $x_0, \ldots, x_{N-1}$.  After a change of variables $x_N - g \mapsto x_N$, we may assume that $f_m = x_N$, and that no other $f_i$ contain the variable $x_N$.  Indeed, if another $f_i$ does contain $x_N$, we may modify $f_i$ by subtracting a multiple of $f_m = x_N$ to eliminate that term.  This changes the defining equations, but not the degrees, nor the ideal defining the complete intersection.  Since the equation $x_N = 0$ cuts out $\pp(a_0,\ldots,a_{N-1}) \subset \pp$, $X$ is a codimension $m-1$ weighted complete intersection $X'$ in this smaller weighted projective space.  It follows from the quasismoothness of $X$ that $X'$ is also quasismooth.  Though it may happen that $\pp(a_0,\ldots,a_{N-1})$ is not well-formed, $X'$ will be isomorphic to yet another quasismooth complete intersection in a well-formed weighted projective space $\mathbb P(a'_0,\ldots,a'_{N-1})$ by \cite[Lemma 2.3]{PrSFano}.  If the resulting equations contain a linear term, we may repeat this process until the assumptions are satisfied.
\end{proof}

Using this lemma, we can assume that every $X$ we consider in the proof of \Cref{thm:max_symmetric_action} will be quasismooth and well-formed.  This in particular means that the adjunction formula holds for $X$, i.e. $K_X \cong \mathcal{O}_X(d_1 + \cdots + d_m - a_0 - \cdots - a_N)$ \cite[Theorem 3.3.4]{Dolgachev}.

A quasismooth weighted complete intersection with no linear terms must also satisfy certain conditions on degrees \cite[Lemma 18.14.(i)]{Iano-Fletcher}:

\begin{lemma}
\label{IF_lemma}
Let $X_{d_1,\ldots,d_m} \subset \pp(a_0,\ldots,a_N)$ be a well-formed quasismooth complete intersection such that none of the equations defining $X$ contains a linear term.  Rearrange degrees and weights such that $d_1 \leq \cdots \leq d_m$ and $a_0 \leq \cdots \leq a_N$.  Then the following inequalities hold.
\begin{enumerate}
    \item\label{part:IF_lemma-1} \(d_{m-j} > a_{N-j}\) for all \(0\leq j\leq m-1\).
    \item\label{part:IF_lemma-2} If \(m\geq \dim X + 1\), then \(d_{m-j}-a_{N-j}\geq a_{\dim X-j}\) for all \(0\leq j\leq\dim X\).
\end{enumerate}
\end{lemma}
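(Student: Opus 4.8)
The plan is to prove both inequalities by analyzing which monomials must appear in the defining equations $f_1, \ldots, f_m$ as a consequence of quasismoothness. The key principle is that quasismoothness of $X$ forces the affine cone $C_X \subset \mathbb{A}^{N+1}$ to be smooth away from the origin, and this imposes strong conditions on the degrees. Since $X$ is well-formed with no linear terms, I can use the explicit description in \cite[Lemma 18.14]{Iano-Fletcher} directly; the task is to extract the two stated inequalities from the quasismoothness criterion. Let me sketch the argument from first principles in case the reader wants the underlying mechanism.

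First I would establish part \eqref{part:IF_lemma-1}. The claim $d_{m-j} > a_{N-j}$ for $0 \leq j \leq m-1$ essentially says that the largest $m$ degrees exceed the largest $m$ weights in a suitably matched-up way. I would argue by contradiction: if $d_{m-j} \leq a_{N-j}$ for some $j$, then the variables $x_{N-j}, x_{N-j+1}, \ldots, x_N$ (those of the $j+1$ largest weights) cannot appear meaningfully in the equations $f_1, \ldots, f_{m-j}$, because a monomial of degree $d_{m-j} \leq a_{N-j}$ built from variables all of weight $\geq a_{N-j}$ would have to be a single such variable, i.e. a linear term, which is excluded by hypothesis. Counting dimensions, this would force the Jacobian of $(f_1, \ldots, f_m)$ to drop rank along the locus where $x_0 = \cdots = x_{N-j-1} = 0$, contradicting quasismoothness (smoothness of the cone away from the origin). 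This is the standard pigeonhole-on-weights argument underlying \cite[Lemma 18.14.(i)]{Iano-Fletcher}.

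Next I would prove part \eqref{part:IF_lemma-2} under the additional hypothesis $m \geq \dim X + 1$, i.e. $m \geq n+1$, equivalently $N \geq 2m - 1$ is not assumed but $N = n + m$ gives $N \leq 2m - 1$. The inequality $d_{m-j} - a_{N-j} \geq a_{n-j}$ refines part \eqref{part:IF_lemma-1} by bounding the gap between degree and weight below by another weight. The idea is that for quasismoothness, each equation $f_i$ must contain, for an appropriate index, a monomial of the form $x_\ell \cdot (\text{monomial})$ where the cofactor has degree at least some $a_{n-j}$; examining the degree bookkeeping on these monomials yields the stated inequality. This again follows the template of \cite[Lemma 18.14.(i)]{Iano-Fletcher}, applied in the regime where there are enough equations relative to the dimension.

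The main obstacle I anticipate is the careful index bookkeeping: matching up the sorted degrees $d_1 \leq \cdots \leq d_m$ with the sorted weights $a_0 \leq \cdots \leq a_N$ and verifying that the monomials forced by quasismoothness exist with exactly the claimed degree relations requires tracking several interlocking inequalities simultaneously. Since the cleanest route is simply to invoke \cite[Lemma 18.14.(i)]{Iano-Fletcher}, which states precisely these conditions for well-formed quasismooth complete intersections without linear terms, I would ultimately present the proof as a direct citation, noting that our hypotheses (well-formed, quasismooth, no linear terms, $\dim X \geq 3$ from \Cref{lem:no-linear-term}) are exactly those required. The substantive work has already been done in reducing to this normal form via \Cref{lem:no-linear-term}.
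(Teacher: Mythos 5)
Your treatment of part \eqref{part:IF_lemma-1} matches the paper: the paper proves it by citing \cite[Lemma 18.14.(i)]{Iano-Fletcher}, and your pigeonhole/Jacobian sketch (if $d_{m-j} \leq a_{N-j}$ then $f_1,\ldots,f_{m-j}$ cannot involve the $j+1$ heaviest variables, so the cone is singular along a positive-dimensional coordinate locus) is the correct underlying mechanism. But your plan for part \eqref{part:IF_lemma-2} has a genuine gap: you propose to dispatch it by the same citation, asserting that \cite[Lemma 18.14.(i)]{Iano-Fletcher} ``states precisely these conditions,'' and it does not --- that result gives only the strict inequalities $d_{m-j} > a_{N-j}$ of part \eqref{part:IF_lemma-1}. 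The refined inequality $d_{m-j} - a_{N-j} \geq a_{\dim X - j}$ in the regime $m \geq \dim X + 1$ is a separate result, which the paper obtains from \cite[Proposition 3.1.(2)]{CCC}. Your fallback sketch for part \eqref{part:IF_lemma-2} (``each equation must contain a monomial $x_\ell \cdot (\text{monomial})$ whose cofactor has degree at least some $a_{n-j}$'') is not developed enough to substitute for that citation: you never specify which coordinate strata force which monomials, nor how the three sorted index families $m-j$, $N-j$, $\dim X - j$ get matched --- exactly the ``interlocking bookkeeping'' you identify as the main obstacle and then defer to a citation that does not cover the statement.

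There is a second, smaller issue that the paper handles explicitly and you assert away. Both cited results are stated under the hypothesis that $X$ is not the intersection of a linear cone with other hypersurfaces, i.e.\ $d_i \neq a_j$ for all $i,j$. This is strictly stronger than ``no linear term appears in the equations'': a degree coincidence $d_i = a_j$ is permitted in the present lemma so long as the monomial $x_j$ does not actually occur in $f_i$. Your claim that the hypotheses ``are exactly those required'' is therefore false at the level of the stated results; one must observe, as the paper does for both \cite{Iano-Fletcher} and \cite{CCC}, that the \emph{proofs} of those results use only the absence of linear terms. Finally, the lemma carries no dimension hypothesis, so importing $\dim X \geq 3$ from \Cref{lem:no-linear-term} is out of place here.
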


\begin{proof}
Part~\eqref{part:IF_lemma-1} is \cite[Lemma 18.14.(i)]{Iano-Fletcher}.
The statement of \cite[Lemma 18.14.(i)]{Iano-Fletcher} assumes that the complete intersection $X$ is not the intersection of a linear cone with other hypersurfaces, i.e., that $d_i \neq a_j$ for any $i$ and $j$.  However, their proof only requires that no linear term appears in any of the equations $f_1, \ldots, f_m$ defining $X$, which is precisely what we assumed.

Part~\eqref{part:IF_lemma-2} is \cite[Proposition 3.1.(2)]{CCC}. (Once again, the statement of \cite[Proposition 3.1.(2)]{CCC} assumes that the complete intersection $X$ is not the intersection of a linear cone with other hypersurfaces, i.e., that $d_i \neq a_j$ for all $i$ and $j$, but the same comment made in part~\eqref{part:IF_lemma-1} shows that the proof extends to our situation.)
\end{proof}

Next, we bound the codimension of Fano and Calabi--Yau weighted complete intersections satisfying the conditions above on linear terms.

\begin{lemma}
\label{lem:codimension_bound}
Suppose that $X_{d_1,\ldots,d_m} \subset \pp(a_0,\ldots,a_N)$ is a well-formed quasismooth complete intersection such that none of the equations defining $X$ contains a linear term. 
\begin{enumerate}
    \item If $X$ is Fano, then the codimension $m$ satisfies $m < (N+1)/2$.  Equivalently, the dimension $n$ of $X$ satisfies $2n+2 > N+1$.
    \item If $X$ is Calabi--Yau, then the codimension $m$ satisfies $m \leq (N+1)/2$. Equivalently, the dimension $n$ of $X$ satisfies $2n+2 \geq N+1$.
\end{enumerate}
\end{lemma}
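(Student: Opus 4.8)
The plan is to argue by contradiction, using the adjunction formula to translate the Fano and Calabi--Yau conditions into inequalities comparing the sum of degrees with the sum of weights, and then to contradict these using the degree bounds of \Cref{IF_lemma}. First I would reorder so that $d_1 \le \cdots \le d_m$ and $a_0 \le \cdots \le a_N$, and invoke the adjunction formula $K_X \cong \mathcal{O}_X\!\left(\sum_{i=1}^m d_i - \sum_{j=0}^N a_j\right)$, valid because $X$ is well-formed with no linear terms. Thus $X$ being Fano is equivalent to $\sum_{i=1}^m d_i < \sum_{j=0}^N a_j$, and $X$ being Calabi--Yau is equivalent to $\sum_{i=1}^m d_i = \sum_{j=0}^N a_j$.

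Next I would negate the desired bound and check that this puts us in the regime where \Cref{IF_lemma}\eqref{part:IF_lemma-2} applies. In the Fano case, assuming $m \ge (N+1)/2$ is equivalent to $2m \ge N+1$, hence to $m \ge n+1$ since $n = N-m$; in the Calabi--Yau case, assuming $m > (N+1)/2$ gives $2m \ge N+2$, i.e. $m \ge n+2$. Either way $m \ge n+1$, so part~\eqref{part:IF_lemma-2} is available.

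The heart of the argument is to sum the two families of inequalities from \Cref{IF_lemma}. Reindexing \eqref{part:IF_lemma-2} via $i = m-j$, it reads $d_i \ge a_{N-m+i} + a_{n-m+i}$ for the top $n+1$ degrees $i = m-n, \ldots, m$, while \eqref{part:IF_lemma-1} reads $d_i > a_{N-m+i}$ for every $i$. I would split $\sum_{i=1}^m d_i$ into the leftover degrees $i = 1, \ldots, m-n-1$ (bounded by \eqref{part:IF_lemma-1}) and the top $n+1$ degrees (bounded by \eqref{part:IF_lemma-2}). On the right-hand side this collects $\sum_{i=1}^m a_{N-m+i} = \sum_{j=N-m+1}^N a_j$, the top $m$ weights, together with $\sum_{i=m-n}^m a_{n-m+i} = \sum_{j=0}^n a_j$, the bottom $n+1$ weights. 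The key bookkeeping point is that the index blocks $\{0, \ldots, n\}$ and $\{N-m+1, \ldots, N\}$ are disjoint and, since $(n+1) + m = N+1$, partition $\{0, \ldots, N\}$ exactly (using $n = N-m$, so the two blocks are adjacent). Hence $\sum_{i=1}^m d_i \ge \sum_{j=0}^N a_j$, with strict inequality precisely when at least one leftover degree occurs, that is when $m - n - 1 \ge 1$, equivalently $2m \ge N+2$.

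To finish: in the Fano case $2m \ge N+1$ already yields $\sum_{i=1}^m d_i \ge \sum_{j=0}^N a_j$, contradicting $\sum d_i < \sum a_j$, so $m < (N+1)/2$; in the Calabi--Yau case $2m \ge N+2$ forces a leftover degree and hence the strict inequality $\sum d_i > \sum a_j$, contradicting $\sum d_i = \sum a_j$, so $m \le (N+1)/2$. The main obstacle I anticipate is entirely the index bookkeeping: verifying that the bottom and top weight blocks tile all $N+1$ weights without overlap, and carefully tracking when strictness appears, since it is exactly this strictness that distinguishes the strict Fano bound from the non-strict Calabi--Yau bound.
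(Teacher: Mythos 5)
Your proposal is correct and follows essentially the same route as the paper's proof: both negate the bound to bring \Cref{IF_lemma}\eqref{part:IF_lemma-2} into play, sum the two families of degree inequalities so that the bottom $n+1$ and top $m$ weights tile all of $\{0,\ldots,N\}$, and use the strictness coming from the leftover degrees bounded by \Cref{IF_lemma}\eqref{part:IF_lemma-1} to separate the Fano and Calabi--Yau cases via adjunction. Your reindexing $i = m-j$ is only a cosmetic difference from the paper's bookkeeping.
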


\begin{proof}
After reordering, we may assume $d_1 \leq \cdots \leq d_m$ and $a_0 \leq \cdots \leq a_N$. Suppose that $m \geq (N+1)/2$ so that $m \geq N-m + 1 = \dim X + 1$.  If this does not hold, then the conclusion of either part of the lemma is already true, and there is nothing to prove.  In the case $m \geq (N+1)/2$, both parts of \Cref{IF_lemma} apply.

Since $d_{m-j} \geq a_{N-j} + a_{\dim X-j}$ for $j = 0,\ldots,\dim X$ by \Cref{IF_lemma}.\eqref{part:IF_lemma-2}, we have that 
\begin{equation}
\label{eq:deg_sum_1}
    d_m + d_{m-1} + \cdots + d_{m-\dim X} \geq a_N + \cdots + a_{N - \dim X} + a_{\dim X} + \cdots + a_0.
\end{equation}
In the case that we have a strict inequality $m > \dim X + 1$, there are more degrees $d_i$ that have not appeared in the inequality above. We may now apply \Cref{IF_lemma}.\eqref{part:IF_lemma-1} to the remaining degrees (i.e., for indices $j = \dim X + 1, \ldots, m-1$).  Summing these inequalities gives that
\begin{equation}
\label{eq:deg_sum_2}
    d_{m-\dim X-1} + \cdots + d_1 > a_{N-\dim X-1} + \cdots + a_{N-m+1}.
\end{equation}
Because $N-m+1 = \dim X + 1$, all the weights of $\pp$ appear on the right-hand side of either \eqref{eq:deg_sum_1} or \eqref{eq:deg_sum_2}.  Adding these two inequalities thus yields $d_1 + \cdots + d_m \geq a_0 + \cdots + a_N$. Furthermore, this inequality is strict unless $m = \dim X + 1$ exactly.  

By the adjunction formula \cite[Theorem 3.3.4]{Dolgachev}, $K_X = \mathcal{O}_X(d_1 + \cdots + d_m - a_0 - \cdots - a_N)$.  Under the assumption $m \geq (N+1)/2$, we've therefore shown that $d_1 + \cdots + d_m - a_0 - \cdots - a_N$ is nonnegative and hence $X$ is not Fano.  This completes the proof of part \eqref{part:IF_lemma-1} of the lemma.

The complete intersection $X$ is Calabi--Yau if and only if $d_1 + \cdots + d_m = a_0 + \cdots + a_N$.  Again under the assumption $m \geq (N+1)/2$, we've shown that this can only occur if we have equality $m = (N+1)/2$.  This proves \eqref{part:IF_lemma-2} of the lemma.
\end{proof}

Having finished the preliminaries, we now begin the main part of the proof of \Cref{thm:max_symmetric_action}. We'll next prove some general properties of higher-dimensional weighted complete intersections with large symmetric actions, which will be key to finishing the proof of~\Cref{thm:max_symmetric_action}.  Indeed, the following lemma assumes that the $S_k$-action on $X$ of dimension $n$ satisfies $k \geq c_{\mathrm{Fano}}(n)$ in the Fano case or $k > c_{\mathrm{CY}}(n)$ in the Calabi--Yau case.  These assumptions put big constraints on $X$, and we will show later that strict inequality will lead to a contradiction. For Fano weighted complete intersections, we include the case of equality $k = c_{\mathrm{Fano}}(n)$ because it will be useful for the classification of maximal examples in \Cref{sect:max_sym_vars}.

\begin{lemma}
\label{lem:large_dim_wci_facts}
Let $S_k$ act faithfully on a well-formed quasismooth weighted complete intersection $X$ of dimension $n$ such that no equation of $X$ contains a linear term.  Suppose that either a) $X$ is Fano, $n \geq 4$, and $k \geq c_{\mathrm{Fano}}(n)$; or b) $X$ is Calabi--Yau, $n \geq 3$, and $k > c_{\mathrm{CY}}(n)$.  Then, after an appropriate change of variables, the following properties hold:
    \begin{enumerate}
    \item\label{part:wci_Ak-standard} The subgroup $A_k \leqslant S_k$ acts by the standard representation in the first $k-1$ variables $x_0, \ldots, x_{k-2}$, which all have the same weight $b$, and acts trivially on the remaining variables $x_{k-1},\ldots,x_N$.
    \item\label{part:wci-Sk-ideal} The equations $f_1,\ldots,f_m$ are contained in the ideal $\langle \sigma_2,\ldots, \sigma_k, V, x_{k-1}, \ldots, x_N \rangle$, where $\sigma_2, \ldots, \sigma_k$, are the elementary symmetric polynomials in $x_0, \ldots, x_{k-2}, y \coloneqq -x_0 - \cdots - x_{k-2}$, and $V$ is the Vandermonde polynomial in these variables.
    \item\label{part:wci-Sk-total-deg} Any collection of $\alpha$ equations among $\{f_1,\ldots,f_m\}$, for any $1 \leq \alpha \leq m$, have total degree at least $\left(\frac{(\alpha + 1)(\alpha+2)}{2}-1 \right)b$.
\end{enumerate}
\end{lemma}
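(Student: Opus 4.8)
\textbf{Setup and part \eqref{part:wci_Ak-standard}.} The plan is to pass from the action on $X$ to a linear action on the coordinates and then read off the structure from \Cref{sym_group_reps}. Since $X$ is quasismooth, well-formed, and of dimension at least $3$, the $S_k$-action is induced by a linear action on the homogeneous coordinates, giving a representation of a Schur cover $\tilde S_k$ on the coordinate space $V = \bigoplus_{i=0}^N \C x_i$ that respects the weight grading, so $V = \bigoplus_w V_w$ with each $V_w$ a $\tilde S_k$-subrepresentation. By \Cref{lem:codimension_bound} we have $N+1 \le 2n+1$ (Fano) or $N+1 \le 2n+2$ (Calabi--Yau), so every block $V_w$ has dimension at most $2n+2$. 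Because $A_k$ is simple for $k\ge 5$ and acts faithfully on $X$, it acts faithfully on $V$, hence faithfully on some block; by \Cref{sym_group_reps} that block contains a nontrivial irreducible constituent, which is the standard representation or its twist by the sign character (both of dimension $k-1$), or — only when $n=4$, $k=8$ — the $8$-dimensional spin representation. A second block of dimension $\ge k-1$ would force $N+1 \ge 2(k-1)$; but $k \ge c_{\mathrm{Fano}}(n)\ge n+3$ gives $2(k-1) > N+1$ in both cases, a contradiction. So exactly one block $V_b$ carries a nontrivial $A_k$-constituent, and every other constituent is trivial or sign and hence $A_k$-trivial. After a change of coordinates, $A_k$ acts by the standard representation on $x_0,\dots,x_{k-2}$ (all of weight $b$) and trivially on $x_{k-1},\dots,x_N$.

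\textbf{Excluding the spin case, and part \eqref{part:wci-Sk-ideal}.} The one exceptional input in \Cref{sym_group_reps} is the spin representation at $n=4$, $k=8$ (and, by the evident extension of the proof of \Cref{sym_group_reps} to $n=3$ in the Calabi--Yau range, at $n=3$, $k=8$). I would exclude it as follows: if it occurred, all coordinates would lie in one weight-$b$ block and the defining equations would be $\tilde S_8$-semi-invariants, hence elements of $\C[z_1,\dots,z_8]^{\tilde A_8}$, whose lowest generators have degrees $2,8,8$ by \Cref{sym_group_reps}. Then $\sum_i d_i$ would be far larger than $\sum_j a_j = 8b$, so by adjunction $-K_X$ could be neither ample nor trivial, contradicting the Fano (resp.\ Calabi--Yau) hypothesis. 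For \eqref{part:wci-Sk-ideal}, the crucial point is that a minimal generating set of $I_X$ spans an $S_k$-subrepresentation of total dimension $m \le n < k-1$, so by \Cref{sym_group_reps} its only constituents are the trivial and sign characters; I may therefore take $f_1,\dots,f_m$ to be $S_k$-semi-invariants. Reducing modulo $\langle x_{k-1},\dots,x_N\rangle$, each $\bar f_i$ is a homogeneous semi-invariant in $x_0,\dots,x_{k-2}$ with no constant or linear term. With $y = -x_0-\cdots-x_{k-2}$, the invariants of the standard representation are $\C[\sigma_2,\dots,\sigma_k]$ and the sign-semi-invariants are $V\cdot\C[\sigma_2,\dots,\sigma_k]$, so $\bar f_i \in \langle \sigma_2,\dots,\sigma_k,V\rangle$ and hence $f_i \in \langle \sigma_2,\dots,\sigma_k,V,x_{k-1},\dots,x_N\rangle$.

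\textbf{Part \eqref{part:wci-Sk-total-deg}.} I would reformulate the bound as the statement that, for every $j$, at most $j-1$ of the equations have degree $\le jb$; this is equivalent to the sorted polynomial degrees $e_1\le\cdots\le e_m$ satisfying $e_\ell \ge \ell+1$, which sums to $\sum_{\ell=1}^\alpha(\ell+1) = \tfrac{(\alpha+1)(\alpha+2)}{2}-1$, giving exactly the asserted total degree. Since any sign-semi-invariant reduction has degree at least $\binom{k}{2}b \gg jb$ in the relevant range, the nonzero reductions of the degree-$\le jb$ equations are invariants $P_i \in \C[\sigma_2,\dots,\sigma_j]$, a polynomial ring of Krull dimension $j-1$. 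Using the faithful flatness of $\C[x_0,\dots,x_{k-2}]$ over $\C[\sigma_2,\dots,\sigma_k]$ together with quasismoothness (which makes $f_1,\dots,f_m$ a regular sequence), one argues that these reductions must form part of a regular sequence; but more than $j-1$ elements lying in $\C[\sigma_2,\dots,\sigma_j]$ cut out a locus of codimension at most $j-1$ and so cannot be regular, the desired contradiction.

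\textbf{Main obstacle.} The delicate step is the last part of \eqref{part:wci-Sk-total-deg}: transferring quasismoothness of $X$ — smoothness of the affine cone $C_X$ away from the origin — into regularity of the reduced sequence $\bar f_1,\dots,\bar f_m$ on the coordinate subspace $L = \{x_{k-1}=\cdots=x_N=0\}$. The reduction of a regular sequence need not be regular, so one must rule out excess-dimensional intersection $C_X\cap L$; I expect this to follow from the interplay of quasismoothness, the absence of linear terms, and the largeness of $k$ relative to $n$ (which already forces $\pp_L\not\subseteq X$), with the $A_k$-symmetry ensuring $0$ is the only $A_k$-fixed point of $L$. The spin-representation exclusion in \eqref{part:wci_Ak-standard} is the other sensitive point, but it is dispatched cleanly by the degree computation above.
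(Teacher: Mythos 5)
Your parts \eqref{part:wci_Ak-standard} and \eqref{part:wci-Sk-ideal} track the paper's proof closely (lift the action to $\Aut(\pp)$, pass to a Schur cover acting block-diagonally on the graded coordinate ring, apply \Cref{sym_group_reps} and \Cref{lem:codimension_bound}, and choose semi-invariant generators; the paper does the last step via the $\tilde S_k$-action on $I/\mathfrak{m}I$ and Nakayama, which also repairs your slightly imprecise claim that a minimal generating set ``spans a subrepresentation''). But your exclusion of the spin case is incomplete: you assert that if the basic spin representation of $\tilde{S}_8$ occurs then all coordinates lie in a single weight block, and this is not forced. In the Fano case $n = 4$, $k = 8$, \Cref{lem:codimension_bound} only gives $N+1 \leq 9$, so the eight spin variables can coexist with one extra variable, i.e., $X \subset \pp(1^{(8)},a)$ of codimension $4$. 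Your degree computation does not literally apply there; the paper handles this subcase with an extra estimate (some equation must involve the extra variable by quasismoothness, while all four equations need $\tilde{A}_8$-invariant parts whose generators have degrees $2, 8, 8$, so the total degree exceeds $a + 2 + 8 + 8 > a + 8 = \sum_i a_i$, contradicting Fano). This omission is fixable by your own method, but it is a genuinely missing case.

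The serious gap is the one you flag yourself in part \eqref{part:wci-Sk-total-deg}. Your argument requires the reductions $\bar f_1, \ldots, \bar f_m$ to form a regular sequence on $L = \{x_{k-1} = \cdots = x_N = 0\}$, equivalently that $C_X \cap L$ be a proper intersection, and you offer only the expectation that quasismoothness supplies this. That is precisely the unproven step: the reduction of a regular sequence need not be regular, and faithful flatness of $\C[x_0,\ldots,x_{k-2}]$ over $\C[\sigma_2,\ldots,\sigma_k]$ cannot deliver it, since the potential excess intersection comes from the interplay with the extra variables $x_{k-1},\ldots,x_N$, which that statement does not see. The paper's proof never needs regularity of the reduced sequence. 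Instead it shows: (i) each $f_i$ involves some block variable, since otherwise a nonzero point of $\{f_2 = \cdots = f_m = 0\} \cap L$ (nonempty because $k > m$) would be a singular point of the affine cone by the Jacobian criterion; (ii) no $\alpha$ of the equations can lie in an ideal generated by $\alpha - 1$ elements of $\{\sigma_2,\ldots,\sigma_k,V\}$, because the common zero locus of those $\alpha-1$ invariants together with $f_{\alpha+1},\ldots,f_m$ would be a subvariety of $\pp$ of codimension at most $m-1$ contained in $X$, contradicting that $X$ has codimension $m$; and (iii) an induction on $\alpha$ then gives sorted degrees at least $2b, 3b, \ldots, (\alpha+1)b$. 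If you want to salvage your reduction-to-$L$ framework, the fix is a Jacobian computation rather than a regular-sequence claim: if $j$ of the reductions lay in $\C[\sigma_2,\ldots,\sigma_j]$, pick $p \neq 0$ in $\{\sigma_2 = \cdots = \sigma_j = 0\} \cap \{\bar f_i = 0 \text{ for the remaining } i\} \subset L$, a locus of dimension at least $k - m \geq 2$; the absence of linear terms plus your degree bookkeeping show that at $p$ the gradients of those $j$ equations have vanishing components along $x_{k-1},\ldots,x_N$ and block components contained in the span of $\nabla\sigma_2(p),\ldots,\nabla\sigma_j(p)$, so the Jacobian of $(f_1,\ldots,f_m)$ has rank at most $m-1$ at $p \in C_X \setminus \{0\}$, contradicting quasismoothness. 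Note finally that what you actually need --- at most $j-1$ of the reductions inside $\C[\sigma_2,\ldots,\sigma_j]$ --- is strictly weaker than properness of $C_X \cap L$, so your formulation demands more than the statement requires.
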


\begin{proof}
Since $n \geq 3$, the subgroup $S_k \leqslant \Aut(X)$ lifts to a subgroup $S_k \leqslant \Aut(\pp)$ by \cite[Theorem 1.3]{PrS2} (here we use that $S_k$ is a reductive group in characteristic zero).

By \Cref{lem:aut-cox-ring} the automorphism group $\Aut(\pp)$ of weighted projective space is described by an exact sequence
\[\xymatrix{1 \ar[r] &  \C^* \ar[r] &  \Aut(S) \ar[r] &  \Aut(\pp) \ar[r] &  1,}\]
where $S = \C[x_0, \ldots, x_{N}]$ is the polynomial ring with each variable $x_i$ of weight $a_i$ and $\Aut(S)$ is the group of graded automorphisms of this ring.
The subgroup $\C^*$ is the group of ``scalar transformations" which for each $i$ map $x_i \mapsto t^{a_i}x_i$, for some $t \in \C^*$.  Since the Schur multiplier $H^2(S_k,\C^*)$ is $\mathbb{Z}/2$ for $k \geq 4$, the map $S_k \rightarrow \Aut(\pp)$ lifts to $\tilde{S}_k \rightarrow \Aut(S)$, where $\tilde{S}_k$ is one of the two representation groups of $S_k$, a central extension of order $2$.

\Cref{lem:aut-cox-ring}.\eqref{item:levi-decomposition} (see also the proof of \cite[Lemma 3.5]{Esser}) shows that any finite subgroup of $\Aut(S)$ is conjugate to one inside the reductive subgroup $\prod_{\ell} \GL_{N_{\ell}}(\C) \leqslant \Aut(S)$ given by the group of automorphisms that do not ``mix" variables with weights of different sizes.  Here $\sum_{\ell} N_{\ell} = N + 1$ is the total number of weights.  (For example, when $\pp = \pp(5,5,2,2,2)$, we have a $\GL_2(\C)$ acting on the first two variables, and a $\GL_3(\C)$ on the last three.)  Projection to each factor $\GL_{N_{\ell}}(\C)$ gives a linear representation of $\tilde{S}_k$.  Since the original map $S_k \rightarrow \Aut(\pp)$ was injective, at least one of these representations must be a faithful linear representation of $S_k$ or $\tilde{S}_k$.

Let $I \subset \C[x_0,\ldots,x_N]$ be the weighted homogeneous prime ideal defining the weighted complete intersection $X$.  Then $I$ is invariant under the $\tilde{S}_k$-action.  By Nakayama's lemma, $I/\mathfrak{m}I$ is a $\C$-vector space with dimension the minimal number of generators of $I$, where $\mathfrak{m}$ is the irrelevant ideal of the graded polynomial ring $\C[x_0,\ldots,x_N]$.  But the minimal number of generators of $I$ is $m$, the codimension of $X$.  This is at most $n+1$ by \Cref{lem:codimension_bound} and hence less than $k-1$, so the action of $\tilde{S}_k$ on $I/\mathfrak{m}I$ is trivial up to sign by the classification of $\tilde{S}_k$ representations (see \Cref{tab:S_k_reps}).  Thus, we may choose a set of weighted homogeneous generators $f_1,\ldots,f_m$ for $I$ such that each $f_i$ is $\tilde{S}_k$-invariant up to sign. 

We saw above that, after an appropriate change of variables in $\pp$, the $S_k$-action on $X$ lifts to an $\tilde{S}_k$-action on $\C[x_0,\ldots,x_N]$ which acts linearly on each vector space \(\C^{N_{\ell}}\) of variables of each given weight. By \Cref{lem:codimension_bound}, $X$ is a complete intersection in $\pp(a_0,\ldots,a_N)$ where $N+1 < 2n+2$ (in the Fano case) or $N+1 \leq 2n+2$ (in the Calabi--Yau case). 

\textit{Claim:} In this setting, we have an irreducible $(k-1)$-dimensional linear representation of $\tilde{S}_k$ inside a space \(\C^{N_{\ell}}\) of variables of the same weight in $\C[x_0,\ldots,x_N]$.

To show the claim, we consider several different cases.
The total number $N+1$ of weights is at most $2n+2$. Thus, \Cref{sym_group_reps} implies that if $n \geq 4$ and we're not in the special case $n = 4$ and $k = 8$, the only irreducible representations which are of small enough dimension to comprise the $\tilde{S}_k$-action on some \(\C^{N_{\ell}}\) are of dimension $1$ and $k-1$.  They cannot all be dimension $1$ or else $S_k$ would not act faithfully on $X$.  We conclude that there is a $(k-1)$-dimensional representation on the weights.

It remains to consider the exceptional cases (1) \(n=3\) and $X$ Calabi--Yau,  and (2) \((n,k)=(4,8)\).  First consider when $X$ is Calabi--Yau and $n = 3$.  The assumptions of the lemma mean $k \geq 8 =c_{\mathrm{CY}}(3) + 1$.  If $k > 8$, all representations of $\tilde{S}_k$ other than those of dimension $1$ and $k-1$ have dimension larger than $2n+2 \geq N+1$ as above.  When $k = 8$, we could conceivably have that $N+1 = 8$ and that $\tilde{S}_8$ acts faithfully by the basic spin representation of dimension $8$.  This would mean that all $8$ weights $a_0,\ldots,a_7$ are equal, so actually $X \subset \pp^7$ is a smooth complete intersection of codimension $4$.  The only way this is possible (since there are no linear equations) is if $X$ is a $(2,2,2,2)$-complete intersection.  But up to scaling there is only one polynomial of degree $2$ which is $\tilde{S}_8$-invariant up to sign (see \Cref{sym_group_reps}), a contradiction.

When \(n=4\) and \(X\) is Calabi--Yau, then the assumption implies \(k>8\). So the remaining exceptional case is when $X$ is Fano and $n = 4$. \Cref{lem:codimension_bound} guarantees that the number of weights is less than $2n+2 = 10$.  If there are $8$ weights and we want to fit a basic spin representation of $\tilde{S}_8$, we have that $X \subset \pp^7$ again, and the invariant polynomials do not have low enough degree as above.  If there are $9$ weights, and the first $8$ are part of the faithful spin representation of $\tilde{S}_8$, then $X \subset \pp(1^{(8)},a)$ has codimension $4$.  At least one equation includes the variable corresponding to $a$ and all four must involve invariant (up to sign) polynomials in the first $8$ variables.  The total degree is therefore more than $a + 2 + 8 + 8$, so $X$ could not be Fano. This shows the claim.

Therefore, we have a $(k-1)$-dimensional linear representation inside a space \(\C^{N_{\ell}}\) of variables of the same weight in $\C[x_0,\ldots,x_N]$. Since $2(k-1) = 2k-2 > 2n+2 \geq N+1$, there is exactly one \(\ell\) with the above property.  After reordering the variables, we can conclude the following: $\tilde{S}_k$ acts by the standard representation of $S_k$ (or its tensor product with the sign representation) on the variables $x_0,\ldots,x_{k-2}$, which all must be of the same weight $b$.  In addition, it acts trivially, or by the sign representation, on all other variables $x_{k-1},\ldots,x_N$, which could all be of different weights. Since all the representations that appear are actually $S_k$ representations rather than just $\tilde{S}_k$ representations, we'll only work with $S_k$ from now on.  In particular, we now know that $A_k \leqslant S_k$ acts by the standard representation in the first $k-1$ variables and acts trivially on the remaining ones. This completes the proof of~\eqref{part:wci_Ak-standard}.

We saw above that each of $f_1,\ldots,f_m$ must be $S_k$-invariant up to sign. In particular, all these equations are $A_k$-invariant, so $f_1,\ldots,f_m \in \langle \sigma_2,\ldots,\sigma_k,V,x_{k-1},\ldots,x_N \rangle$, where $\sigma_2,\ldots,\sigma_k$ are the elementary symmetric polynomials in $x_0,\ldots,x_{k-2},y \coloneqq -x_0-\cdots-x_{k-2}$, and $V$ is the Vandermonde polynomial in these variables. Indeed, for the usual permutation representation of $A_k$ on $\C^k$, the invariant ring would be generated by the first $k$ elementary symmetric polynomials and the Vandermonde polynomial.  The standard representation is the subspace of the permutation representation $\C^k$ where the variables add to zero, so the invariants are given as above, with $\sigma_1$ omitted, and $x_0 + x_1 + \cdots + x_{k-2} + y = 0$.  This shows~\eqref{part:wci-Sk-ideal}.

In order to prove~\eqref{part:wci-Sk-total-deg}, first observe as above that the assumption \(k\geq c_{\mathrm{Fano}}(n)\) or $k > c_{\mathrm{CY}}(n)$ implies $k-1 > \frac{N+1}{2}$, so more than half the total variables belong to the set permuted by $S_k$.  Also, $k > n$, so the codimension satisfies $m = N-n \geq N-k$; that is, there are more equations than there are variables not belonging to the permutation action. From here, we make a few additional simple observations.

Each of the polynomials $f_1,\ldots,f_m$ must involve some variable $x_0,\ldots,x_{k-2}$ or else $X$ would fail to be quasismooth.  This follows from the same type of arguments as the proof of \Cref{lem:codimension_bound}.  Indeed, suppose one of the equations, say $f_1$, does not include any of $x_0,\ldots,x_{k-2}$.  Then let $\Pi$ be the $(k-1)$-plane in $\mathbb{A}^{N+1}$ given by $x_{k-1} = \cdots = x_N = 0$.  The intersection $Z \coloneqq \{f_2 = \cdots = f_m = 0\} \cap \Pi$ has positive dimension because $k > m$, so choose a point $p \in Z \setminus \{0\} \subset \mathbb{A}^{N+1}$.  Then $f_1$ and all its derivatives are identically zero at $p$, so the affine cone over $X$ is singular at $p$ by the Jacobian criterion.

Hence we conclude that each equation $f_i$ involves at least one of the elementary symmetric polynomials or $V$.  Next, note that for any $1 \leq \alpha \leq m$, it's impossible for a subcollection of $\alpha$ of the equations, say $\{f_1,\ldots, f_{\alpha} \}$, to be contained in an ideal generated by $\alpha - 1$ or fewer elements of the set $\{\sigma_2,\ldots,\sigma_k,V\}$. 
Otherwise the locus where these $\alpha-1$ elements and $f_{\alpha+1}, \ldots, f_m$ are zero would be a subvariety of $\pp$ of codimension at most $m-1$ contained in $X$, but this contradicts the fact that $X$ is codimension $m$.
 
 We'll use the above observation to show by induction on \(\alpha\) that, for any subset of $\alpha$ equations from $\{f_1,\ldots,f_{\alpha}\}$, with $1 \leq \alpha \leq m$, we have $\deg(f_1) + \cdots + \deg(f_{\alpha}) \geq \left(\frac{(\alpha+1)(\alpha+2)}{2}-1\right)b$.  Here $b$ is the weight from part \eqref{part:wci_Ak-standard} of the lemma. In the base case, we've already shown that a single equation $f_1$ must include some polynomial from $\{\sigma_2,\ldots,\sigma_k,V\}$, so it has degree at least $\deg(\sigma_2) = 2b$, since $\sigma_2$ has the smallest degree.  By the inductive hypothesis, suppose that any subset of $\alpha-1$ polynomials from $T = \{f_1,\ldots,f_{\alpha}\}$ satisfies the corresponding inequality on degree.  Some equation from $T$ must be of degree at least $\deg(\sigma_{\alpha+1})$ or else only $\sigma_2,\ldots,\sigma_{\alpha}$ would appear in equations in $T$, contradicting the previous paragraph.  Since the sum of degrees of the other $\alpha-1$ equations is at least $\deg(\sigma_2) + \cdots + \deg(\sigma_{\alpha})$, this completes the induction.  We note that the Vandermonde polynomial has degree $\binom{k}{2}b$, which is larger than the degree of any of the elementary symmetric polynomials;  thus, its degree did not feature in the bounds just proved.  
\end{proof}

The following lemma will nearly finish the proof.

\begin{lemma}\label{lem:total-degree}
    Suppose that the same assumptions from \Cref{lem:large_dim_wci_facts} hold. Then the total degree $d = d_1 + \cdots + d_m$ of $X$ satisfies
    $$d \geq a_{k-1} + \cdots + a_N + \left(\frac{(k-n-1)(k-n)}{2}-1\right)b$$
    where \(b\) is the weight in \Cref{lem:large_dim_wci_facts}.\eqref{part:wci_Ak-standard}.
    If equality holds, then $N = k-2$, so there are no additional weights on the right-hand side, and $\pp(a_0,\ldots,a_N) \cong \pp^{k-2}$.
\end{lemma}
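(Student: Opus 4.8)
The plan is to split the sorted degrees $d_1 \leq \cdots \leq d_m$ into the $r \coloneqq N - k + 2$ largest ones and the remaining $m - r$ smallest ones, and to bound these two groups by the two terms on the right-hand side separately. The key bookkeeping observation is that $r$ is exactly the number of variables $x_{k-1},\ldots,x_N$ on which $S_k$ acts trivially (with weights $a_{k-1},\ldots,a_N$), by \Cref{lem:large_dim_wci_facts}.\eqref{part:wci_Ak-standard}, while the other $k-1$ variables all carry the weight $b$; and that the complementary index satisfies $m - r = (N-n)-(N-k+2) = k-n-2$, which is precisely the value of $\alpha$ that makes the symmetric estimate of \Cref{lem:large_dim_wci_facts}.\eqref{part:wci-Sk-total-deg} produce the coefficient $\tfrac{(k-n-1)(k-n)}{2}-1$. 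Under the hypotheses inherited from \Cref{lem:large_dim_wci_facts} one has $m - r = k-n-2 \geq 1$, so the group of $m-r$ smallest degrees is always nonempty and $r \leq m-1$, while the group of $r$ largest degrees is nonempty precisely when $r \geq 1$.

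First I would treat the $r$ largest degrees. Let $a_0' \leq \cdots \leq a_N'$ denote the weights $a_0,\ldots,a_N$ rearranged in increasing order. Since $X$ is well-formed, quasismooth, and has no linear terms, \Cref{IF_lemma}.\eqref{part:IF_lemma-1} applies and gives $d_{m-j} > a_{N-j}'$ for all $0 \leq j \leq m-1$. Summing these strict inequalities over $j = 0, \ldots, r-1$ shows that the sum of the $r$ largest degrees strictly exceeds $a_N' + \cdots + a_{N-r+1}'$, the sum of the $r$ largest weights. Since the sum of the $r$ largest weights is at least the sum of any $r$ of the weights, in particular the extra weights $a_{k-1},\ldots,a_N$, I obtain
\[
d_m + d_{m-1} + \cdots + d_{m-r+1} \;>\; a_{k-1} + \cdots + a_N .
\]

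Next I would apply \Cref{lem:large_dim_wci_facts}.\eqref{part:wci-Sk-total-deg} to the $m-r$ smallest-degree equations; the bound there holds for any subcollection of equations, so with $\alpha = m-r = k-n-2$ and the identity $\tfrac{(\alpha+1)(\alpha+2)}{2}-1 = \tfrac{(k-n-1)(k-n)}{2}-1$ it yields
\[
d_1 + \cdots + d_{m-r} \;\geq\; \left(\frac{(k-n-1)(k-n)}{2} - 1\right) b .
\]
Adding the two displayed inequalities gives the asserted lower bound on $d = \sum_i d_i$. For the equality statement, I note that the first displayed inequality is strict whenever $r \geq 1$; hence equality in the total bound forces $r = 0$, i.e.\ $N = k-2$. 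Then there are no extra variables, all $N+1 = k-1$ weights equal $b$, so there are indeed no additional weights on the right-hand side, and $\pp(b,\ldots,b) \cong \pp^{k-2}$ (well-formedness in fact forces $b=1$).

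The only point requiring care is the index arithmetic that makes the two halves fit together exactly, together with checking that \Cref{IF_lemma}.\eqref{part:IF_lemma-1} is genuinely available here: its hypothesis excluding a linear cone is replaced by the weaker no-linear-term hypothesis, exactly as recorded in the proof of \Cref{IF_lemma}, and $X$ satisfies this by assumption. The strictness built into \Cref{IF_lemma}.\eqref{part:IF_lemma-1} is what pins down the equality case and produces the conclusion $\pp \cong \pp^{k-2}$; no deeper input (e.g.\ a Jacobian/quasismoothness argument matching individual extra variables to equations) is needed once the degrees are split in this way.
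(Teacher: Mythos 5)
Your proposal is correct and follows essentially the same route as the paper's proof: both split the sorted degrees into the $k-n-2$ smallest equations, bounded below by $\left(\frac{(k-n-1)(k-n)}{2}-1\right)b$ via \Cref{lem:large_dim_wci_facts}.\eqref{part:wci-Sk-total-deg}, and the remaining $N-k+2$ largest, bounded strictly below by $a_{k-1}+\cdots+a_N$ via \Cref{IF_lemma}.\eqref{part:IF_lemma-1}, including the same observation that these extra weights need not be the largest of the $a_i$ but are dominated by the sum of the largest ones. Your equality analysis likewise matches the paper: strictness of the second bound forces $N=k-2$, whence all weights equal $b$, and well-formedness gives $b=1$ and $\pp(a_0,\ldots,a_N)\cong\pp^{k-2}$.
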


\begin{proof}
Assume that $f_1,\ldots,f_m$ are ordered by increasing degree. By \Cref{lem:large_dim_wci_facts}.\eqref{part:wci-Sk-total-deg}, the first $k-n-2$ equations satisfy $\deg(f_1) + \cdots + \deg(f_{k-n-2}) \geq \deg(\sigma_2) + \cdots + \deg(\sigma_{k-n-1}) = \left(\frac{(k-n-1)(k-n)}{2}-1\right)b$.

If \(m>k-n-2\), then we may apply \Cref{IF_lemma}.\eqref{part:IF_lemma-1} for \(0\leq j\leq N-k+1\) to obtain
\begin{equation}\label{eqn:deg-lastN+2-k}
    \deg(f_{k-n-1})+ \cdots + \deg(f_m) > a_{k-1} + \cdots + a_N.
\end{equation}
(Contrary to the notation of that lemma, the weights $a_{k-1}, \ldots, a_N$ might not be the largest of the $a_i$, but the same inequality will certainly also hold for a different subset of weights with smaller total.)  Here we note that $m \geq N-k+2$ because $k \geq n+2 = N-m+2$.

Adding the two inequalities together yields the inequality in the statement of the lemma, and we see that equality can only occur when there is no contribution from \eqref{eqn:deg-lastN+2-k}. This only occurs when \(m=k-n-2\), so that $N = k-2$ and all the weights are the same.  Since our weighted projective space is well-formed, this implies $b = 1$ and $\pp(a_0,\ldots,a_N) \cong \pp^{k-2}$.
\end{proof}

We can now conclude the proof of \Cref{thm:max_symmetric_action}.

\begin{proof}[Proof of \Cref{thm:max_symmetric_action}]
In light of \Cref{prop:low_dim_wci}, we may assume that $n \geq 4$, for $X$ Fano, and $n \geq 3$, for $X$ Calabi--Yau. By \Cref{lem:no-linear-term}, we may exclusively consider well-formed $X$ with the property that no defining equation has a linear term.  Finally, we may also assume that the $S_k$-action satisfies $k \geq c_{\mathrm{Fano}}(n)$ in the Fano case, or $k > c_{\mathrm{CY}}(n)$ in the Calabi--Yau case.  Indeed, if these inequalities on $k$ are not satisfied, then the conclusion of \Cref{thm:max_symmetric_action} automatically holds.  In summary, we have reduced to the setting where the conditions of \Cref{lem:large_dim_wci_facts} and \Cref{lem:total-degree} are satisfied, so we may apply the conclusions of these lemmata.

For $X$ to be Fano (resp. Calabi--Yau), we must have $d < a_0 + \cdots + a_N = (k-1)b + a_{k-1} + \cdots + a_N$ (resp. $\leq$).  This inequality together with \Cref{lem:total-degree} implies 
\begin{equation}
\label{degree_condition}
\frac{(k-n-1)(k-n)}{2} < k \text{ (resp. $\leq$)}.
\end{equation}
For a fixed \(n\), \eqref{degree_condition} with a strict inequality holds for an integer \(k\) if and only if \(n+1+\frac{1-\sqrt{8n+9}}{2} < k < n+1+\frac{1+\sqrt{8n+9}}{2}\).
Therefore, \(k\leq n+\left\lceil\frac{1+\sqrt{8n+9}}{2}\right\rceil = c_{\mathrm{Fano}} (n)\).  \Cref{ex:optimal_example} shows that the bound for Fano $X$ is sharp for all $n \geq 1$.

Similarly, in the Calabi--Yau case, \eqref{degree_condition} with a non-strict inequality holds for an integer \(k\) if and only if \(n+1+\frac{1-\sqrt{8n+9}}{2} \leq k \leq n+1+\frac{1+\sqrt{8n+9}}{2}\).  Therefore, \(k\leq n+\left\lfloor\frac{1+\sqrt{8n+9}}{2}\right\rfloor  + 1 = c_{\mathrm{CY}}(n)\).  This completes the proof.
\end{proof}

\begin{proof}[Proof of \Cref{introthm-bound-Fano-WCI}]
It follows from \Cref{thm:max_symmetric_action} that the largest symmetric group action on a weighted complete intersection is by $c_{\mathrm{Fano}}(n) = n + \left\lceil \frac{1 + \sqrt{8n + 9}}{2} \right\rceil$.  Taking the limit of $c_{\mathrm{Fano}}(n)/(n+1)$ as $n \rightarrow \infty$ gives the required result.
\end{proof}

\section{Maximally symmetric varieties}
\label{sect:max_sym_vars}

In this section, we study 
maximally symmetric Fano weighted complete intersections.  These are the Fano weighted complete intersections of dimension $n$ which have a faithful action by $S_k$, where $k = c_{\mathrm{Fano}}(n)$ is the largest possible.

Using the setup of \Cref{sect:wci}, we can further limit the possible behavior of maximally symmetric Fano weighted complete intersections.

\begin{proposition}
\label{prop:zero_or_one_extra_weights}
Suppose that $X$ is a maximally symmetric Fano weighted complete intersection of dimension $n \geq 4$ with action by $S_k$, i.e., $k = c_{\mathrm{Fano}}(n)$.  Suppose further that $X$ is quasismooth and well-formed and no defining equation contains a linear term.  Then, $X$ is embedded in either $\pp^{k-2}$ or $\pp^{k-1}(1^{(k-1)},a)$.
\end{proposition}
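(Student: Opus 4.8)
The plan is to prove that at most one weight is different from $b$. Write $s \coloneqq k-n$, let $A \coloneqq a_{k-1}+\cdots+a_N$ be the sum of the extra weights, and let $r \coloneqq N-k+2$ be their number, so that the codimension is $m = N-n = s-2+r$ and, by \Cref{lem:large_dim_wci_facts}\eqref{part:wci_Ak-standard}, the first $k-1$ variables have weight $b$ while $x_{k-1},\dots,x_N$ carry the extra weights. Since $X$ is maximally symmetric, $k=c_{\mathrm{Fano}}(n)$ is the largest integer with $\binom{k-n}{2}<k$; applying this maximality to $k+1$ yields the numerical input $\binom{s}{2}\ge n+1$, equivalently $k-\binom{s}{2}\le s-1$. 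Granting $r\le 1$, well-formedness finishes the proof: deleting any extra weight leaves at least $k-2\ge 1$ copies of $b$, forcing $b=1$, so that $X\subset \pp^{k-2}$ if $r=0$ and $X\subset \pp(1^{(k-1)},a)$ if $r=1$.

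To bound $r$, I would play two lower bounds for the total degree $d=d_1+\cdots+d_m$ against the Fano inequality $d< a_0+\cdots+a_N=(k-1)b+A$ coming from adjunction. First, the nondegeneracy established in the proof of \Cref{lem:large_dim_wci_facts}\eqref{part:wci-Sk-total-deg} (no $\alpha$ of the equations lie in an ideal generated by fewer than $\alpha$ of $\sigma_2,\dots,\sigma_k,V$) lets Hall's marriage theorem attach to each $f_i$ a distinct symmetric generator $\sigma_{j_i}$ it involves; hence $d_i\ge b\,j_i$ with the $j_i$ distinct integers $\ge 2$. Second, setting the symmetric variables to zero, the ``pure-extra'' parts $g_i\coloneqq f_i|_{x_0=\cdots=x_{k-2}=0}$ must cut the $r$-dimensional space of extra variables down to the origin: otherwise, since every monomial of $f_i$ that involves a symmetric variable has symmetric-degree $\ge 2$, the Jacobian of $(f_1,\dots,f_m)$ would drop to rank $\le r<m$ along a positive-dimensional locus of the affine cone over $X$, violating quasismoothness. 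Choosing an $r$-element subset $P$ of the equations whose pure-extra parts form a homogeneous system of parameters, I claim that the absence of linear terms forces $\sum_{i\in P}d_i\ge 2A$.

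Assuming this, the proof closes by optimization. Substituting $A\le \tfrac12\sum_{i\in P}d_i$ into the Fano inequality gives
\[
\sum_{i\notin P}d_i+\tfrac12\sum_{i\in P}d_i<(k-1)b,
\]
and bounding each $d_i$ below by $b\,j_i$ reduces this to an inequality among the distinct indices $j_i$. Minimizing $\sum_{i\notin P}j_i+\tfrac12\sum_{i\in P}j_i$ over all assignments of $m$ distinct integers $\ge 2$ — the minimum places $\{2,\dots,s-1\}$ outside $P$ and the larger indices $\{s,\dots,s+r-1\}$ inside $P$ — yields
\[
\Big(\binom{s}{2}-1\Big)+\tfrac12\Big(rs+\binom{r}{2}\Big)<k-1=n+s-1.
\]
Since $\binom{s}{2}\ge n+1$, this forces $rs+\binom{r}{2}<2(s-1)$; as the left-hand side equals $2s+1$ when $r=2$ and only grows for larger $r$, we conclude $r\le 1$.

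The crux is the estimate $\sum_{i\in P}d_i\ge 2A$, which is exactly where the no-linear-term hypothesis (\Cref{lem:no-linear-term}) is used: morally, eliminating an extra variable of weight $a$ costs degree at least $2a$, because it cannot be cut out by a linear equation. I would prove it by viewing the $g_i$ for $i\in P$ as a weighted-homogeneous regular sequence cutting $\A^r$ to the origin and showing that the socle degree of the resulting Artinian complete intersection is at least $\sum_\ell a_\ell$ — equivalently $\sum_{i\in P}d_i-\sum_\ell a_\ell\ge \sum_\ell a_\ell$ — with the extremal case being $g_i=x_{\ell}^2$. Establishing this sharp weighted bound is the main technical obstacle; by contrast, the Hall selection of the $\sigma_{j_i}$ and the Jacobian computation producing the pure-extra system are straightforward given \Cref{lem:large_dim_wci_facts}.
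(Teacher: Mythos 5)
Your route is genuinely different from the paper's: the paper applies \Cref{lem:large_dim_wci_facts}\eqref{part:wci-Sk-total-deg} with $\alpha = k-n-1$ and the maximality of $k = c_{\mathrm{Fano}}(n)$ to deduce $a_{k-1}+\cdots+a_N > d_{k-n}+\cdots+d_m$, and then, assuming at least two extra weights, contradicts this via \Cref{IF_lemma}\eqref{part:IF_lemma-1} together with one sharpened inequality $d_m \geq a_N + a_{k-1}$ extracted from quasismoothness at the coordinate point of $x_N$; no Hall matching and no bound of the shape $\sum_{i \in P} d_i \geq 2A$ appear there. Your numerics do close (the maximality input $\binom{s}{2} \geq n+1$, the minimization, and the conclusion $rs+\binom{r}{2} < 2s-2$ forcing $r \leq 1$ are all correct), but the proposal has two genuine gaps, both located at the estimate $\sum_{i\in P} d_i \geq 2A$. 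First, the set $P$ you want does not exist in general: an $r$-element subset of the \emph{given} equations whose pure-extra parts form a homogeneous system of parameters can fail to exist even when the full system cuts out the origin. For instance, in $\A^2$ with coordinates $u,v$ of equal weight, the quadrics $uv$, $u(u-v)$, $v(v-u)$ have common zero locus $\{0\}$ and no linear terms, yet any two of them vanish along a common line. Second, the crux inequality itself (your socle-degree claim, equivalently $\sum_{i \in P} d_i - \sum_\ell a_\ell \geq \sum_\ell a_\ell$) is asserted rather than proven; you explicitly flag it as the main technical obstacle, so the proof is incomplete exactly where all the work is.

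Both gaps can be repaired simultaneously, and without any system of parameters: the estimate holds for the full collection $g_1,\dots,g_m$. For a weight value $c$, let $W_c$ be the coordinate subspace spanned by the extra variables of weight $\geq c$. Since $V(g_1,\dots,g_m) = \{0\}$ and intersecting with one hypersurface drops dimension by at most one, at least $\#\{\ell : a_\ell \geq c\}$ of the $g_i$ restrict to a nonzero polynomial on $W_c$; any such $g_i$ contains a monomial supported on variables of weight $\geq c$, which by the no-linear-term hypothesis has ordinary degree $\geq 2$, whence $d_i \geq 2c$. The sets $\{i : g_i|_{W_c} \neq 0\}$ are nested as $c$ decreases, so sorting the extra weights decreasingly as $a_{(1)} \geq \cdots \geq a_{(r)}$ one can greedily select \emph{distinct} indices $i_1,\dots,i_r$ with $d_{i_j} \geq 2a_{(j)}$, giving $\sum_j d_{i_j} \geq 2A$ with $P = \{i_1,\dots,i_r\}$; your final optimization then runs verbatim. (Your Hall matching to the $\sigma_{j_i}$ is fine, modulo the same mild care about pure-extra parts in the ideal membership that the paper's own proof of \Cref{lem:large_dim_wci_facts}\eqref{part:wci-Sk-total-deg} glosses over.) As submitted, however, the argument fails at its central claim: the hsop selection is false as stated, and the degree bound it was meant to feed is unproven.
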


As before, $X$ will denote a quasismooth weighted complete intersection $X_{d_1,\ldots,d_m} \subset \pp(a_0,\ldots,a_N)$ defined by equations $f_1,\ldots,f_m$ which are weighted homogeneous of degrees $d_1,\ldots,d_m$, respectively.

\begin{proof}
The conditions of \Cref{lem:large_dim_wci_facts} are met, so the three properties listed there hold for $X$.  We retain the notation from that lemma.

In \Cref{lem:total-degree}, we applied \Cref{lem:large_dim_wci_facts}.\eqref{part:wci-Sk-total-deg} with $\alpha = k-n-2$ and \Cref{IF_lemma}.\eqref{part:IF_lemma-1} to obtain a lower bound on the total degree $d = d_1 + \cdots + d_m$ of the complete intersection $X$.  Now, we'll do nearly the same thing with a different value of $\alpha$ to obtain another useful bound on $d$.  From now on, order the equations $f_1,\ldots,f_m$ by increasing degree.

If the dimension of the ambient weighted projective space $\pp$ is $N = k-2$, then $\pp \cong \pp^{k-2}$ by \Cref{lem:large_dim_wci_facts}.\eqref{part:wci_Ak-standard}.  Otherwise, there is at least one weight \textit{not} contained in the faithful $S_k$-representation, so that $N \geq k-1$.  Since $N = n+m$, this implies $k-n-1 \leq m$, so we may apply \Cref{lem:large_dim_wci_facts}.\eqref{part:wci-Sk-total-deg} to the first $\alpha = k-n-1$ equations $f_1, \ldots, f_{k-n-1}$ to obtain:
$$d_1 + \cdots + d_{k-n-1} \geq \left(\frac{(k-n)(k+1-n)}{2}-1 \right)b.$$
Since $X$ is Fano, the total degree is less than the sum of the weights.  That is,
$$a_0 + \cdots + a_N = (k-1)b + a_{k-1} + \cdots + a_N > d_1 + \cdots + d_m \geq \left(\frac{(k-n)(k-n+1)}{2}-1 \right)b + d_{k-n} + \cdots + d_m.$$
Rearranging this expression gives that 
$$\left(k - \frac{(k-n)(k-n+1)}{2}\right)b > (d_{k-n} + \cdots + d_m) - (a_{k-1} + \cdots + a_N).$$
The key point is that $k = c_{\mathrm{Fano}}(n)$ is the largest integer $k$ which satisfies $k - \frac{(k-n-1)(k-n)}{2} > 0$ (see \eqref{degree_condition}).  On the left-hand side, we've replaced $k$ by $k+1$ in the fraction, so the left-hand side must now be nonpositive.  Hence the right-hand side is actually negative, i.e.,
\begin{equation}
\label{eq:degree_ineq_maximal}
a_{k-1} + \cdots + a_N > d_{k-n} + \cdots + d_m.
\end{equation}
Now we'll assume that $N > k-1$, i.e., there are at least \textit{two} weights not contained in the faithful $S_k$-representation, and derive a contradiction. In the inequality~\eqref{eq:degree_ineq_maximal}, there are $N-k+2$ weights on the left and $m-(k-n)+1 = N-k+1$ degrees on the right; in particular, the assumption that \(N>k-1\) means there is a nonzero number of terms on the right-hand side.

Reorder $a_{k-1}, \ldots, a_N$ by increasing size. Recalling that \(k > n+1\), so that \(m-1>m-k+n\), we have $d_m > a_N, d_{m-1} > a_{N-1}, \ldots, d_{k-n} > a_k$ by \Cref{IF_lemma}.\eqref{part:IF_lemma-1}.  We claim that we can improve the first inequality to $d_m \geq a_N + a_{k-1}$.  Indeed, some equation must involve $x_N$, or else the image of the coordinate point $p \coloneqq (0,\ldots,0,1) \in \mathbb{A}^{N+1}$ of $x_N$ is in $X$ and all partial derivatives of all equations vanish there, contradicting quasismoothness.  If $x_N$ ever appears with an exponent of at least $2$, $d_m \geq 2a_N \geq a_N + a_{k-1}$ and we're done.  If not, since there are no linear terms, $x_N$ always appears multiplied by other variables and hence $p \in X$.  We must then have a monomial of the form $x_j x_N$ with $j \neq N$ in some equation, or else once again all equations would have all partial derivatives vanishing at $p$.  But this $j$ cannot be from $0,\ldots,k-2$, because those variables only appear in degree at least $2$ polynomials $\sigma_2,\ldots,\sigma_k, V$.  We conclude that $j \in \{k-1,\ldots,N-1\}$ so the largest degree $d_m$ is at least $a_N + a_{k-1}$.

In summary, $d_m \geq a_N + a_{k-1}, d_{m-1} > a_{N-1}, \ldots, d_{k-n} > a_k$.  Adding these together contradicts inequality \eqref{eq:degree_ineq_maximal}.  We have thus shown that \(k-2\leq N\leq k-1\). That is, the ambient weighted projective space $\pp$ is of the form either $\pp^{k-2}$ or $\pp(1^{(k-1)},a)$, where in the second case we note that $b = 1$ or else $\pp$ is not well-formed.
\end{proof}

\Cref{introthm:maximally-symmetric-Fano-WCI} states that maximally symmetric Fano weighted complete intersections are finite covers of complete intersections in $\pp^N$ cut out by symmetric polynomials. This will now follow quickly from \Cref{prop:zero_or_one_extra_weights}.  We omit the case of dimension $n = 2$ in \Cref{introthm:maximally-symmetric-Fano-WCI} because the largest symmetric group inside $\Cr(2)$ is $S_5$, and there is a copy of $S_5$ contained in $\Cr(2)$ acting regularly on a cubic surface, which is not defined by a symmetric polynomial.  Furthermore, this action is not conjugate to an $S_5$-action on a minimal rational surface \cite[Table 4]{DI09}.

\begin{proof}[Proof of \Cref{introthm:maximally-symmetric-Fano-WCI}]
As usual, we first deal with low-dimensional cases.  When $n = 1$, $\pp^1$ is the only quasismooth Fano weighted complete intersection, so the theorem is trivial.  For $n = 3$, \cite[Proposition 1.1(ii)]{Prokhorov_space_Cremona} shows that any three-dimensional $S_7$-Mori fiber space over a rationally connected base is equivariantly isomorphic to the complete intersection of Fermat hypersurfaces of degrees $1$, $2$, and $3$ in $\pp^3$.  It follows in particular that this is the only maximally symmetric quasismooth Fano weighted complete intersection of dimension $3$.

For $n \geq 4$, we can apply \Cref{prop:zero_or_one_extra_weights}.  We reduce as before to the case where $X$ has no linear terms and is well-formed using \Cref{lem:no-linear-term}.  This shows that $X$ is isomorphic to a weighted complete intersection in $\pp^{k-2}$ or $\pp^{k-1}(1^{(k-1)},a)$.  In either case, \Cref{lem:large_dim_wci_facts}.\eqref{part:wci-Sk-ideal} already showed that the variables $x_0,\ldots,x_{k-1}$ only appear in the equations of $X$ in the form of elementary symmetric polynomials in  $x_0,\ldots,x_{k-1},y$, plus the Vandermonde polynomial. However, no equation $f_i$ may involve the Vandermonde polynomial $V$ or else the degree would be too high to be Fano.  Hence $X$ is defined by equations which are all symmetric in $x_0,\ldots,x_{k-1},y$.

We may now ``add back on" an additional weight equal to $1$ to make the standard representation into the permutation representation; indeed, we saw that the equations for $X$ are combinations of elementary symmetric polynomials of degrees $2,\ldots,k$ in $x_0, \ldots, x_{k-2}, y = -x_0 - \cdots - x_{k-2}$.  Add the variable $y$ of weight $1$ and the extra linear relation $x_0 + \cdots + x_{k-2} + y = 0$ to see the same $X$ as living inside $\pp^{k-1}$ or $\pp^k(1^{(k-1)},a)$, this time defined by invariants of the permutation representation in the first $k$ variables.

If $X \subset \pp^{k-1}$, we can take the finite cover $X \rightarrow X$ to be the identity and we're done.  If $X \subset \pp^k(1^{(k-1)},a)$ consider the restriction to $X$ of the rational map $\pi: \pp^k(1^{(k-1)},a) \dashrightarrow \pp^{k-1}$ forgetting the last weight.  For $X$ to be quasismooth in $\pp^{k-1}(1^{(k-1)},a)$, there must be a monomial of the form $z^r$ appearing in some $f_i$, where $z$ is the variable of weight $a$.  Otherwise, the coordinate point of $z$ would be contained in $X$ and all partial derivatives of all equations would vanish there, since the other variables always appear as part of symmetric polynomials of degree at least $2$.

It follows that the restriction $\pi|_X: X \rightarrow \mathrm{im}(X)$ is a morphism because the only basepoint of $\pi$ is the coordinate point of the last variable, which we saw cannot be contained in $X$.  The image $Y \coloneqq \mathrm{im}(X)$ is clearly defined by symmetric polynomials in $\pp^{k-1}$ and the map has finite fibers, hence is finite.
\end{proof}

Non-trivial finite covers do appear in maximally symmetric examples; see \Cref{ex:one_extra_weight}.

We can say something more precise in the case that a maximally symmetric Fano weighted complete intersection in addition has the largest possible index of $-K_X$. \Cref{introthm:maximally-symmetric-large-index-Fano-WCI} is a direct consequence of the following statement:

\begin{theorem}
\label{thm:maximally_symmetric_largest_index}
Let $X$ be a quasismooth Fano weighted complete intersection of dimension $n \geq 2$ with faithful $S_k$-action, where $k = c_\mathrm{Fano}(n)$ is the upper bound of \Cref{thm:max_symmetric_action}. Then the index $i_X$ of $-K_X$ satisfies:
$$i_X \leq k - \frac{(k-n)(k-n-1)}{2}.$$
When equality holds, $X$ is equivariantly isomorphic to the intersection of Fermat hypersurfaces of degrees $1, \ldots, k-n-1$ in $\pp^{k-1}$.
\end{theorem}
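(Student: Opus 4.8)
The plan is to pin down a concrete embedding of $X$, read the index off the weights and degrees, and then extract the equality case from the degree estimates already in hand. I would first dispose of the small dimensions by classification. For $n=2$ the maximal action is by $S_5$, and the $S_5$-equivariant del Pezzo surfaces are classified in \cite{DI09}; among them the anticanonical index is at most $2=5-\tfrac{(5-2)(5-3)}{2}$, with equality precisely for $\pp^1\times\pp^1$, which is the intersection of Fermat hypersurfaces of degrees $1,2$ in $\pp^4$. For $n=3$, the proof of \Cref{introthm:maximally-symmetric-Fano-WCI} (relying on \cite{Prokhorov_space_Cremona}) identifies the unique such $X$ with the Fermat intersection of degrees $1,2,3$, whose index is $1=7-\tfrac{(7-3)(7-4)}{2}$, so both assertions hold there. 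From then on I would assume $n\geq 4$. By \Cref{lem:no-linear-term} I may replace $X$ by an isomorphic well-formed complete intersection with no linear terms in its equations (which leaves the intrinsic invariant $i_X$ unchanged), and then \Cref{prop:zero_or_one_extra_weights} applies: $X$ sits in $\pp^{k-2}$ or in $\pp^{k-1}(1^{(k-1)},a)$. In particular the common weight $b$ of \Cref{lem:large_dim_wci_facts}.\eqref{part:wci_Ak-standard} equals $1$.

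Next I would compute the index. Since $X$ is a well-formed quasismooth weighted complete intersection of dimension at least $3$, its class group is $\Cl X\cong\Z$, generated by $\mathcal{O}_X(1)$, so adjunction gives $i_X=\iota$ with $\iota\coloneqq(a_0+\cdots+a_N)-d$ and $d=d_1+\cdots+d_m$. Writing $\sum_i a_i=(k-1)+(a_{k-1}+\cdots+a_N)$ using $a_0=\cdots=a_{k-2}=1$, and substituting the bound $d\geq(a_{k-1}+\cdots+a_N)+\bigl(\tfrac{(k-n-1)(k-n)}{2}-1\bigr)$ from \Cref{lem:total-degree}, the extra weights cancel and I get
$$i_X=\sum_i a_i-d\leq(k-1)-\Bigl(\tfrac{(k-n-1)(k-n)}{2}-1\Bigr)=k-\frac{(k-n)(k-n-1)}{2},$$
the asserted bound.

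For the equality case, equality here forces equality in \Cref{lem:total-degree}, which by that lemma happens only when $N=k-2$ and $\pp\cong\pp^{k-2}$. Then $X$ is cut out by $m=k-n-2$ equations $f_1,\ldots,f_m$, invariant up to sign under the standard representation of $S_k$, with degrees summing to exactly $2+3+\cdots+(k-n-1)$. Ordering $d_1\leq\cdots\leq d_m$, the distinct-generators argument from the proof of \Cref{lem:large_dim_wci_facts}.\eqref{part:wci-Sk-total-deg}---that no $\alpha$ of the equations lie in an ideal generated by fewer than $\alpha$ of $\sigma_2,\ldots,\sigma_k,V$---gives $d_\alpha\geq\alpha+1$ for every $\alpha$; as the total degree is $2+\cdots+(k-n-1)$, all of these are equalities and $d_\alpha=\alpha+1$. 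Because the Vandermonde degree $\binom{k}{2}$ is too large to appear, each $f_\alpha$ is a polynomial in $\sigma_2,\ldots,\sigma_{\alpha+1}$ whose $\sigma_{\alpha+1}$-coefficient is nonzero (again by the distinct-generators argument), and inducting on $\alpha$ gives $(f_1,\ldots,f_m)=(\sigma_2,\ldots,\sigma_{k-n-1})$. Reintroducing a variable $y$ of weight $1$ with the relation $x_0+\cdots+x_{k-2}+y=0$, exactly as in the proof of \Cref{introthm:maximally-symmetric-Fano-WCI}, would exhibit $X\subset\pp^{k-1}$ as $\{\sigma_1=\cdots=\sigma_{k-n-1}=0\}$; by Newton's identities this equals $\{p_1=\cdots=p_{k-n-1}=0\}$, the intersection of Fermat hypersurfaces of degrees $1,\ldots,k-n-1$, which is smooth and irreducible by \Cref{lem:Fermat_ci_smooth} and carries the permutation action of $S_k$, giving the equivariant isomorphism.

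I expect the last paragraph to be the main obstacle: turning the single numerical equality into the rigid conclusion that the degrees are exactly $2,\ldots,k-n-1$ and that the invariant equations generate the same ideal as the elementary symmetric polynomials. Both steps hinge on the codimension and distinct-generators bookkeeping behind \Cref{lem:large_dim_wci_facts}.\eqref{part:wci-Sk-total-deg}, and the final translation into Fermat form relies on Newton's identities after the permutation representation is restored.
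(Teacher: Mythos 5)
Your proposal is correct and follows essentially the same route as the paper's proof: classification for $n\leq 3$, then \Cref{lem:no-linear-term}, \Cref{prop:zero_or_one_extra_weights} (giving $b=1$), the class-group computation via \cite[Theorem 2.15]{PrS2}, and \Cref{lem:total-degree} for the bound, with equality forcing $X\subset\pp^{k-2}$ and defining ideal $\langle\sigma_2,\ldots,\sigma_{k-n-1}\rangle$, then restoring the permutation variable and passing to Fermat form. Your final paragraph merely spells out in more detail (the $d_\alpha=\alpha+1$ induction and the nonvanishing $\sigma_{\alpha+1}$-coefficient) what the paper asserts in one line, and it is a valid elaboration of the same bookkeeping behind \Cref{lem:large_dim_wci_facts}.\eqref{part:wci-Sk-total-deg}.
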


\begin{proof}
As usual, we'll first deal with low dimensions.  By \cite{DI09}, the possible actions of $S_5$ on del Pezzo surfaces are on $\pp^1 \times \pp^1$, the Clebsch diagonal cubic surface, and the degree 5 del Pezzo. Only the first case has the maximal index of $-K_X$ equal to $2$, and this $\pp^1 \times \pp^1$ is the quadric which is a Fano--Fermat complete intersection in $\pp^4$.  In dimension $3$, we know that up to equivariant isomorphism the unique $S_7$-action on a Fano quasimsooth weighted complete intersection is on the $(1,2,3)$-Fano--Fermat complete intersection in $\pp^6$, of index $1$.

Suppose $n \geq 4$. Assume that $X$ is well-formed and has no linear terms in its defining equations.  We may assume this without loss of generality by \Cref{lem:no-linear-term}, which allows us to replace the original $X \subset \mathbb{P}$ with an isomorphic $X' \subset \mathbb{P}'$ with the desired properties.  By taking the $S_k$-action on $X'$ to be the one induced by this isomorphism, we can ensure $X \cong X'$ is equivariant.

By \cite[Theorem 2.15]{PrS2}, the class group of a quasismooth well-formed weighted complete intersection of dimension at least $3$ is isomorphic to $\Z$ with generator $\mathcal{O}_X(1)$.  Therefore, the index of $-K_X$ for a Fano weighted complete intersection equals $a_0 + \cdots + a_N - d$, where $d = d_1 + \cdots + d_m$ of $X$ is the total degree of $X$. We know from \Cref{lem:total-degree} that the total degree  satisfies
$$d \geq a_{k-1} + \cdots + a_N + \left(\frac{(k-n-1)(k-n)}{2}-1\right)b,$$
while the sum of the weights is $a_0 + \cdots + a_N = (k-1)b + a_{k-1} + \cdots + a_N$.  Therefore, 
$$i_X \leq \left(k - \frac{(k-n-1)(k-n)}{2}\right)b.$$
We learned in \Cref{prop:zero_or_one_extra_weights} that $b = 1$ in all maximal examples, so this gives the desired index inequality. 

Suppose now that equality holds. \Cref{lem:total-degree} also showed that this inequality can only be an equality if $X$ is actually a complete intersection in $\pp^{k-2}$ defined by invariants of the standard representation.  The codimension of $X$ is therefore $k-n-2$.  The minimum total degree of $k-n-2$ equations is precisely $\frac{(k-n-1)(k-n)}{2}-1$ and this can only occur when the defining ideal is $\langle \sigma_2,\ldots,\sigma_{k-n-1} \rangle$.  Add back on the extra weight as above (this operation is an $S_k$-equivariant isomorphism on $X$) and note that the elementary symmetric polynomials $\sigma_1,\ldots,\sigma_{\alpha}$ generate the same ideal as the Fermat polynomials $p_1,\ldots,p_{\alpha}$ (over a field).  Therefore, $X$ is $S_k$-equivariantly isomorphic to the Fano--Fermat complete intersection $\{p_1 = \cdots = p_{k-n-1} = 0\} \subset \pp^{k-1}$ of \Cref{ex:optimal_example}.  Note that this $X$ is smooth by \Cref{lem:Fermat_ci_smooth}.
\end{proof}

\begin{proof}[Proof of \Cref{introthm:maximally-symmetric-large-index-Fano-WCI}]
In each dimension $n \geq 2$, \Cref{introthm:maximally-symmetric-large-index-Fano-WCI} directly follows from \Cref{thm:maximally_symmetric_largest_index}. The latter theorem omits the case of $n = 1$ because the index of $-K_{\pp^1}$ is $2$ rather than $1$, as the formula predicts.  Nevertheless, $\pp^1$ is still a Fano--Fermat variety $\{x_0 + x_1 + x_2 + x_3 = x_0^2 + x_1^2 + x_2^2 + x_3^2 = 0\} \subset \pp^3$ with the $S_4$-action by permutation, so \Cref{introthm:maximally-symmetric-large-index-Fano-WCI} holds in all dimensions.
\end{proof}

\section{Symmetries and boundedness}\label{sec:sym-bound}

In this section, we prove statements about the boundedness of Fano $4$-folds 
and $5$-dimensional klt singularities
admitting $S_8$-actions. First, we recall the concepts of dual complexes and coregularity.

\begin{definition}
{\em 
Let $E$ be a simple normal crossing divisor on a smooth variety $X$.
The \defi{dual complex} $\mathcal{D}(E)$ is the CW complex whose vertices correspond to the components of $E$ and whose $k$-cells correspond to the irreducible components of the intersection of $k+1$ components of $E$.

Let $(X,\Gamma)$ be a log Calabi--Yau pair.
Let $\pi\colon Y \rightarrow X$ be a log resolution of $(X,\Gamma)$.
Write $\pi^*(K_X+\Gamma)=K_Y+\Gamma_Y$.
Let $S_Y$ be the sum of all the components of $\Gamma_Y$ that appear with coefficient $1$.
The \defi{dual complex} $\mathcal{D}(Y,\Gamma_Y)$ of $(Y,\Gamma_Y)$ is 
the CW complex $\mathcal{D}(S_Y)$.
}
\end{definition}

In~\cite[Theorem 3]{dFKX17}, the authors show that
the homotopy class of $\mathcal{D}(S_Y)$ is indepenent of the chosen log resolution.
More precisely, given two log resolutions $Y\rightarrow X$ and $Y'\rightarrow X$ of $(X,\Gamma)$, the dual complexes $\mathcal{D}(S_Y)$ and $\mathcal{D}(S_{Y'})$ are simple homotopy equivalent to each other.
Thus, we have a well-defined \defi{dual complex} of $\mathcal{D}(X,\Gamma)$.
If $G$ is a finite group acting on $(X,\Gamma)$, we may consider a $G$-equivariant log resolution of the pair.
Hence, $G$ acts on $\mathcal{D}(X,\Gamma)$.
The dual complex of a log Calabi--Yau pair is in general a pseudo-manifold~\cite[Theorem 1.6]{FS20};
however, in dimension at most $4$, we know that they are orbifolds~\cite[Proposition 5]{KX16}.

\begin{definition}
{\em 
Let $(X,\Gamma)$ be a log Calabi--Yau pair. 
The \defi{coregularity} of $(X,\Gamma)$, written ${\rm coreg}(X,\Gamma)$, is defined to be 
\[
\dim X - \dim \mathcal{D}(X,\Gamma)-1.
\]
Let $(X,B)$ be a log Fano pair.
The \defi{coregularity} of $(X,B)$ is the minimum among the coregularities 
of $(X,\Gamma)$ where $\Gamma\geq B$ and 
$(X,\Gamma)$ is log Calabi--Yau. 
The coregularity of a $n$-dimensional log Fano pair is contained in the set 
$\{0,\dots,n\}$.
}
\end{definition}

The concept of coregularity has recently been connected with log canonical thresholds, indices of Calabi--Yau pairs, and complements of Fano varieties (see~\cite{FMP22,FMM22,FFMP22}).
For log Calabi--Yau pairs, the coregularity is independent of the chosen crepant model:

\begin{lemma}[{\cite[Proposition 3.11]{FMM22}}]\label{lem:coreg-crep-bir}
Let $(X,\Gamma)$ be a log Calabi--Yau pair.
Let $(X',\Gamma')$ be a crepant model of $X$, i.e., a birational log Calabi--Yau pair for which there exists a common resolution $p\colon Y\rightarrow X$ and $q\colon Y\rightarrow X'$ with $p^*(K_X+\Gamma)=q^*(K_{X'}+\Gamma')$.
Then ${\rm coreg}(X,\Gamma)={\rm coreg}(X',\Gamma')$.
\end{lemma}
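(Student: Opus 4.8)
The plan is to compute both dual complexes on a single common log resolution and to show that the crepant hypothesis forces the relevant boundaries, and hence the dual complexes, to coincide. First I would choose, by resolution of singularities, a smooth variety $W$ together with a projective birational morphism $g\colon W\to Y$ such that both composites $p\circ g\colon W\to X$ and $q\circ g\colon W\to X'$ are log resolutions of $(X,\Gamma)$ and $(X',\Gamma')$, respectively. Such a $W$ exists because we may dominate $Y$ by any log resolution of the union of the boundary divisors and exceptional loci of both pairs.

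Next I would transport the crepant identity up to $W$. Write $K_W+\Gamma_W=(p\circ g)^*(K_X+\Gamma)$ and $K_W+\Gamma'_W=(q\circ g)^*(K_{X'}+\Gamma')$. The hypothesis $p^*(K_X+\Gamma)=q^*(K_{X'}+\Gamma')$ on $Y$ pulls back under $g$ to give $(p\circ g)^*(K_X+\Gamma)=(q\circ g)^*(K_{X'}+\Gamma')$ on $W$. Subtracting $K_W$ from both sides yields $\Gamma_W=\Gamma'_W$ as $\qq$-divisors. In particular, the reduced sum $S_W$ of the coefficient-$1$ components is literally the same divisor on $W$ whether it is computed from the $X$-side or the $X'$-side.

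It then follows that the CW complexes $\mathcal{D}(S_W)$ built from $W$ for the two pairs are identical. Since $W$ is a log resolution of each pair, the dual complexes $\mathcal{D}(X,\Gamma)$ and $\mathcal{D}(X',\Gamma')$ are both represented by this single complex $\mathcal{D}(S_W)$; by \cite[Theorem 3]{dFKX17} the (simple) homotopy type of the dual complex is independent of the chosen log resolution, so computing on $W$ is legitimate. In particular $\dim\mathcal{D}(X,\Gamma)=\dim\mathcal{D}(S_W)=\dim\mathcal{D}(X',\Gamma')$. Because $X$ and $X'$ are birational we have $\dim X=\dim X'$, and since each coregularity equals $\dim X-\dim\mathcal{D}(X,\Gamma)-1$, the two coregularities agree.

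The main obstacle — really the only delicate point — is the well-posedness of the dimension entering the definition of coregularity: one must know that $\dim\mathcal{D}(X,\Gamma)$ depends only on the pair and not on the particular log resolution, rather than merely its simple homotopy type. This is where the pseudo-manifold structure of the dual complex of a log Calabi--Yau pair \cite[Theorem 1.6]{FS20} is invoked, ensuring that the dimension is a genuine invariant. Granting this, the argument above reduces entirely to producing the single resolution $W$ and reading off the identity $\Gamma_W=\Gamma'_W$, which is immediate from the crepant hypothesis with no recourse to the negativity lemma.
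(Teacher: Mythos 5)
Your overall route is the natural one, and since the paper does not prove \Cref{lem:coreg-crep-bir} at all---it imports it from \cite[Proposition 3.11]{FMM22}---the comparison here is with that citation rather than with an in-house argument. Your steps through the common model are correct and complete as far as they go: a $W$ dominating $Y$ that is simultaneously a log resolution of both pairs exists by the standard argument you give; the crepant identity pulls back to $K_W+\Gamma_W = K_W+\Gamma'_W$, hence $\Gamma_W=\Gamma'_W$ literally (and indeed no negativity lemma is needed, since the crepant identity is a hypothesis, not something to be derived); so the coefficient-one parts $S_W$ and the complexes $\mathcal{D}(S_W)$ coincide on the nose, and both represent the respective dual complexes because $W$ is a log resolution of each pair. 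Granting that $\dim \mathcal{D}(X,\Gamma)$ is independent of the chosen log resolution---which the paper's definition of coregularity already presupposes---your computation on the single model $W$ finishes the proof.

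The one genuine soft spot is exactly the point you flag, but your proposed justification for it does not work as stated. Being an equidimensional pseudo-manifold, as in \cite[Theorem 1.6]{FS20}, does not make the dimension a simple-homotopy invariant: equidimensional pseudo-manifolds (with boundary) of different dimensions can be simple homotopy equivalent---a point and a $k$-simplex, or the disks $\mathbb{D}^k$ for varying $k$, are all simple homotopy equivalent---so knowing that two log resolutions each produce a pseudo-manifold and that the two complexes are simple homotopy equivalent by \cite[Theorem 3]{dFKX17} does not by itself force their dimensions to agree (it does when the dual complex has nontrivial top homology, e.g.\ when it is a sphere, but not in the collapsible cases). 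The standard way to close this gap is through log canonical centers: $\dim X - 1 - \dim\mathcal{D}(X,\Gamma)$ is identified with the dimension of a minimal lc center of a dlt modification, a quantity manifestly independent of the resolution, and this identification is part of the framework of \cite{FMM22} that the paper is quoting. Note that your own identity $\Gamma_W=\Gamma'_W$ already gives $a(E;X,\Gamma)=a(E;X',\Gamma')$ for every divisor $E$ over $W$, so the two pairs have the same valuations of log discrepancy zero; this makes the crepant invariance of the lc-center description transparent and would be the cleaner thing to invoke at the last step, in place of the appeal to \cite{FS20}.
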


The following lemma states that the dimension of dual complexes of log Calabi--Yau pairs is preserved under finite quotients.

\begin{lemma}\label{lem:dim-dc-quot}
Let $(X,\Gamma)$ be a log Calabi--Yau pair.
Let $G\leqslant {\rm Aut}(X,\Gamma)$ be a finite group. 
Let $Y\coloneqq X/G$, let $p\colon X\rightarrow Y$ be the quotient morphism, and let $\Gamma_Y$ be the boundary divisor for which $p^*(K_Y+\Gamma_Y)=K_X+\Gamma$.
Then, we have that $\dim \mathcal{D}(Y,\Gamma_Y)=\dim \mathcal{D}(X,\Gamma)$.
\end{lemma}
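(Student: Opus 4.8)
The plan is to reduce the statement to the principle that a finite group quotient does not change the dimension of a dual complex, realized combinatorially on an adapted resolution. First I would record two elementary reductions. Since $p$ is finite, $\dim X=\dim Y$; and applying $p_*$ to $p^*(K_Y+\Gamma_Y)=K_X+\Gamma\sim_{\qq}0$ and using $p_*p^*=\deg(p)$ gives $K_Y+\Gamma_Y\sim_{\qq}0$, while log canonicity of $(X,\Gamma)$ together with the crepancy of $p$ shows $(Y,\Gamma_Y)$ is log canonical. Thus $(Y,\Gamma_Y)$ is again a log Calabi--Yau pair. By the definition of coregularity, $\dim\mathcal{D}(X,\Gamma)=\dim X-1-{\rm coreg}(X,\Gamma)$ and likewise for $(Y,\Gamma_Y)$, so (as $\dim X=\dim Y$) the lemma is equivalent to the equality ${\rm coreg}(X,\Gamma)={\rm coreg}(Y,\Gamma_Y)$.

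Next I would pass to resolutions adapted to the $G$-action. Choose a $G$-equivariant log resolution $\pi\colon X'\to X$ of $(X,\Gamma)$ and write $K_{X'}+\Gamma'=\pi^*(K_X+\Gamma)$, so that $\mathcal{D}(X,\Gamma)=\mathcal{D}(S')$ where $S'$ is the sum of the coefficient-one components of $\Gamma'$. Equivariance means $G$ acts on $X'$ preserving $S'$ and permuting its components, so $G$ acts simplicially on $\mathcal{D}(X,\Gamma)$. Set $Y'\coloneqq X'/G$ with quotient map $q\colon X'\to Y'$ and the boundary $\Gamma'_{Y}$ determined by $q^*(K_{Y'}+\Gamma'_{Y})=K_{X'}+\Gamma'$; then $Y'$ has klt quotient singularities and the induced map $Y'\to Y$ is crepant and birational, birationality descending from $\pi$ precisely because $\pi$ is $G$-equivariant. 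Finally, take a log resolution $\mu\colon\widetilde{Y}\to Y'$ which is simultaneously a log resolution of $(Y,\Gamma_Y)$, and let $\widetilde{S}$ be the sum of the coefficient-one components of the log pullback of $K_Y+\Gamma_Y$, so $\mathcal{D}(Y,\Gamma_Y)=\mathcal{D}(\widetilde{S})$.

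The heart of the argument is to identify $\mathcal{D}(\widetilde{S})$ with the quotient $\mathcal{D}(S')/G$. Because $q$ is a crepant finite morphism, the ramification formula keeps log discrepancies in correspondence: a divisorial valuation over $Y'$ is a log canonical place of $(Y',\Gamma'_{Y})$ exactly when the valuations over it on $X'$ are log canonical places of $(X',\Gamma')$, while the exceptional divisors that $\mu$ extracts over the klt quotient singularities of $Y'$ have positive log discrepancy and hence contribute nothing to $\widetilde{S}$. Consequently the coefficient-one divisors computing $\mathcal{D}(Y,\Gamma_Y)$ are exactly the $q$-images of the components of $S'$, and their incidences are the $G$-orbits of the incidences of $S'$, giving a natural identification $\mathcal{D}(Y,\Gamma_Y)\cong\mathcal{D}(X,\Gamma)/G$ (for the lemma only the equality of dimensions is needed). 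Since $G$ acts simplicially—after a barycentric subdivision the action is regular—the quotient of the finite simplicial complex $\mathcal{D}(X,\Gamma)$ carries top-dimensional simplices to top-dimensional simplices, so $\dim\big(\mathcal{D}(X,\Gamma)/G\big)=\dim\mathcal{D}(X,\Gamma)$, and the claim follows. Equivalently, one may package this through coregularity: a $G$-equivariant dlt modification together with \Cref{lem:coreg-crep-bir} replaces both pairs by crepant models on which the coregularity equals the dimension of a minimal log canonical center, and a minimal log canonical center upstairs maps finitely, hence with the same dimension, onto one downstairs, and conversely.

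I expect the main obstacle to be exactly the discrepancy bookkeeping of the third paragraph: verifying via the ramification formula for the crepant finite quotient $q$ that log canonical places correspond on the nose, and that resolving the klt quotient singularities of $Y'$ creates no new coefficient-one divisor. This is what guarantees $\mathcal{D}(Y,\Gamma_Y)=\mathcal{D}(X,\Gamma)/G$ rather than a complex of possibly larger dimension; by contrast, the topological input that finite simplicial quotients preserve dimension, and the reduction to coregularity, are comparatively routine.
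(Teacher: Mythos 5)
Your strategy can be made to work, and in its final ``packaged'' form it does, but the step you yourself single out as the heart of the argument is false as stated. It is not true that the only divisors extracted by $\mu$ lie over the quotient singularities and have positive log discrepancy, nor that the coefficient-one divisors on $\widetilde{Y}$ are exactly the $q$-images of the components of $S'$. A concrete local model: let $G=\zz/2$ act on $X'=\mathbb{A}^2$ by $(x,y)\mapsto(-x,-y)$ with $\Gamma'=\{x=0\}+\{y=0\}$. Then $Y'$ is the $A_1$-singularity $\{u^2=st\}$, the quotient is crepant, and $\Gamma'_Y$ consists of the two rulings $\{s=u=0\}$ and $\{t=u=0\}$ through the vertex. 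The minimal resolution $\mu$ extracts a single curve $E$ with $\mu^*(\text{each ruling})=\widetilde{C}_i+\tfrac{1}{2}E$, so $a_E(Y',\Gamma'_Y)=1+0-(\tfrac12+\tfrac12)=0$: the divisor $E$ appears in $\widetilde{S}$ with coefficient one although it is the image of no component of $S'$. This is the typical situation, because the quotient singularities sit exactly on log canonical centers of the boundary, and resolving the pair there necessarily extracts new lc places (consistently with your own valuation correspondence: the blow-up of the origin upstairs is an lc place mapping to $E$ with ramification index $2$). Consequently the identification $\mathcal{D}(Y,\Gamma_Y)\cong\mathcal{D}(X,\Gamma)/G$ can hold at best up to subdivision/PL homeomorphism (in the example an edge becomes the path $\widetilde{C}_1$--$E$--$\widetilde{C}_2$), and establishing even that requires more than the bookkeeping you give.

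Fortunately, your closing ``equivalently'' remark is essentially a complete and correct proof of the dimension statement and should be promoted to the main argument: after a $G$-equivariant dlt modification and \Cref{lem:coreg-crep-bir}, one has $\dim\mathcal{D}=\dim X-1-(\text{minimal dimension of an lc center})$, and under the finite crepant morphism log discrepancies rescale by ramification indices (\cite[Proposition 5.20]{KM98}), so lc places and lc centers correspond, with dimensions of centers preserved because the morphism is finite; one should spell out both directions (the image of a minimal lc center is an lc center of the same dimension, and every lc center downstairs is dominated by one upstairs). Note that this route is genuinely different from the paper's proof, which argues by induction on dimension: there one passes to a $G$-equivariant dlt modification, uses equidimensionality of the dual complex \cite[Theorem 1.6]{FS20} to write $\dim\mathcal{D}(X,\Gamma)=\dim\mathcal{D}(S,\Gamma_S)+1$, where $(S,\Gamma_S)$ is the adjunction pair of a component $S\subset\lfloor\Gamma\rfloor$ (the link of the vertex $v_S$), observes that $S\to S_Y$ is the quotient by the stabilizer $G_S$, and concludes by induction. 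Your global valuation-theoretic argument avoids the induction and the use of links entirely; it is a perfectly good alternative once the flawed third paragraph is excised.
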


\begin{proof}
We proceed by induction on the dimension of $X$. The case of dimension $1$ is clear.
By passing to a $G$-equivariant dlt modification, we may assume that $(X,\Gamma)$ is dlt.
By~\cite[Theorem 1.6]{FS20}, the dual complex $\mathcal{D}(X,\Gamma)$ is a equidimensional pseudomanifold.
If $(X,\Gamma)$ is klt, then the statement is clear,
since in this case both $(X,\Gamma)$ and $(Y,\Gamma_Y)$ are klt, so their dual complexes have dimension $-1$.
Thus, we may assume that $\lfloor \Gamma\rfloor$ is non-empty.
Let $S\subset \lfloor \Gamma\rfloor$ be an irreducible component and $S_Y$
the image of $S$ on $Y$.
Let $(S,\Gamma_S)$ be the log Calabi--Yau pair obtained by adjunction of $(X,\Gamma)$ to $S$,
and let $(S_Y,\Gamma_{S_Y})$ be the log Calabi--Yau pair obtained by adjunction of $(Y,\Gamma_Y)$ to $S_Y$.
Note that $\dim \mathcal{D}(X,\Gamma)=\dim(S,\Gamma_S)+1$
and $\dim \mathcal{D}(Y,\Gamma_Y)=\dim(S_Y,\Gamma_{S_Y})+1$.
Indeed, the dual complex $\mathcal{D}(S,\Gamma_S)$ is the link of the vertex $v_S$ corresponding to $S$ in $\mathcal{D}(X,\Gamma)$,
and the analogous statement holds for $(S_Y,\Gamma_{Y_S})$ and $(Y,\Gamma_Y)$.
Let $G_S\leqslant G$ be the subgroup fixing $S$.
Then $G_S$ acts on $(S,\Gamma_S)$.
By construction, we have that $p_S\colon S\rightarrow S_Y$ is the quotient morphism by $G_S$ and $p_s^*(K_{S_Y}+\Gamma_{S_Y})=K_S+\Gamma_S$.
By induction on the dimension, we have that 
$\dim \mathcal{D}(S,\Gamma_S)=\dim \mathcal{D}(S_Y,\Gamma_{S_Y})$.
This finishes the proof.
\end{proof}

Now, we prove the main global statement of this section. To do so, we prove first lemmata regarding 
finite actions on normal varieties, 
alternating group actions on Calabi--Yau surfaces and 
Calabi--Yau $3$-folds, 
and subgroups of the special orthogonal groups.

\begin{lemma}\label{lem:normal+cyclic}
Let \(X\) be a normal variety and \(E\) a prime divisor on \(X\). If \(G\leqslant\Aut(X)\) is a finite subgroup that fixes \(E\) pointwise, then \(G\) is a normal cyclic subgroup of \(\Aut(X,E)\).
\end{lemma}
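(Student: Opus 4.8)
The strategy I would follow is to detect the action of $G$ by a single character measuring how $G$ acts in the direction transverse to $E$, and then to show this character is faithful. Let $\eta$ denote the generic point of the prime divisor $E$. Since $X$ is normal and $E$ has codimension one, $\mathcal{O}_{X,\eta}$ is a discrete valuation ring whose residue field is the function field $\C(E)$, and its cotangent space $\mathfrak{m}_\eta/\mathfrak{m}_\eta^2$ is a one-dimensional vector space over $\C(E)$. Because $G$ fixes $E$ pointwise, it acts trivially on $\C(E)$ and preserves $\eta$ together with $\mathcal{O}_{X,\eta}$; hence each $g \in G$ acts $\C(E)$-linearly on the line $\mathfrak{m}_\eta/\mathfrak{m}_\eta^2$, i.e.\ by multiplication by a scalar $\chi(g) \in \C(E)^{\times}$. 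This assignment is a group homomorphism $\chi \colon G \to \C(E)^{\times}$, and since each $g$ has finite order, $\chi(g)$ is a root of unity; as the only roots of unity in $\C(E)$ are the constants, we obtain $\chi \colon G \to \C^{\times}$.

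I would then prove that $\chi$ is injective, which immediately yields that $G$ embeds into $\C^{\times}$ and is therefore cyclic. Suppose $\chi(g)=1$, and pick a general closed point $p \in E$ lying in the smooth loci of both $X$ and $E$; such a point exists because $\mathrm{Sing}\,X$ has codimension at least two in $X$. Then $g(p)=p$, the differential $dg_p$ is the identity on $T_pE$ (because $g$ fixes $E$ pointwise), and it acts trivially on the normal line $T_pX/T_pE$ (this is exactly the condition $\chi(g)=1$), so $dg_p = \id$. Since $g$ has finite order and fixes the smooth point $p$, its action can be linearized near $p$, so $g$ coincides with its linear part $dg_p=\id$ on a neighborhood of $p$; as $X$ is irreducible, this forces $g=\id$. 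Hence $\ker\chi$ is trivial and $G$ is cyclic.

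Finally, for the normality statement I would place $G$ inside the subgroup $N \coloneqq \ker\big(\Aut(X,E) \to \Aut(E)\big)$ consisting of all automorphisms that fix $E$ pointwise; this $N$ is normal in $\Aut(X,E)$ since it is the kernel of the restriction homomorphism. The same scalar construction extends $\chi$ to $N$ (now valued in $\C(E)^{\times}$), and a short computation on $\mathfrak{m}_\eta/\mathfrak{m}_\eta^2$ shows it is conjugation-invariant, $\chi(hgh^{-1}) = \chi(g)$ for $h \in \Aut(X,E)$ and $g \in N$, using that the automorphism of $\C(E)$ induced by $h$ fixes the constant field $\C$. Together with the injectivity of $\chi$ on finite-order elements, this shows that inside any finite overgroup $H \leqslant \Aut(X,E)$ the pointwise stabilizer $H \cap N$ is faithfully recorded by $\chi$ and is exactly the cyclic kernel of $H \to \Aut(E)$, hence normal in $H$. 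I expect this normality step to be the main obstacle: because $\ker\chi$ may contain nontrivial automorphisms that are unipotent along $E$, a finite cyclic $G$ need not absorb all of its $\Aut(X,E)$-conjugates unless it is realized as the full pointwise stabilizer kernel inside the relevant group, which is precisely the setting in which the lemma is invoked (there $G$ appears as the kernel of $S_k \to \Aut(E)$, automatically normal in $S_k$).
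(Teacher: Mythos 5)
Your proof is correct in substance and, on one point, more careful than the paper's own. For cyclicity the paper is a one-line citation of \cite[Corollary 2.13]{BFMS20}; your character argument --- the scalar action $\chi\colon G\to\C(E)^{\times}$ on the conormal line $\mathfrak{m}_\eta/\mathfrak{m}_\eta^2$ at the generic point of $E$, the observation that roots of unity in $\C(E)$ are constants, and injectivity via linearization at a general smooth point --- is exactly the standard mechanism behind that citation, so here you have reproved the black box rather than found a new route. One small slip: knowing $dg_p$ is the identity on $T_pE$ and on $T_pX/T_pE$ only makes $dg_p$ unipotent, not the identity; you need the finite order of $g$ (which you invoke anyway in order to linearize) to upgrade a unipotent $dg_p$ of finite order to $dg_p=\id$ in characteristic zero. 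That is a one-line repair and does not affect the argument.

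On normality, your closing caveat is exactly right, and it exposes a genuine imprecision in the paper. The paper's entire normality proof is the computation $(hgh^{-1})(x)=x$ for $x\in E$ and $h\in\Aut(X,E)$; this shows only that $hgh^{-1}$ lies in the pointwise stabilizer $N=\ker\bigl(\Aut(X,E)\to\Aut(E)\bigr)$, i.e.\ that $N$ is normal --- not that $hgh^{-1}\in G$. Indeed the literal statement fails for an arbitrary finite $G$: on $X=\pp^2$ with $E=\{z=0\}$, the involution $g\colon[x:y:z]\mapsto[x+z:y:-z]$ fixes $E$ pointwise, yet conjugating by $h\colon[x:y:z]\mapsto[x:y+z:z]\in\Aut(X,E)$ gives $hgh^{-1}\colon[x:y:z]\mapsto[x+z:y-2z:-z]\notin\langle g\rangle$, so $\langle g\rangle$ is not normal; the obstruction is precisely the unipotent part of $N$ that you flagged. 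What the paper actually uses, in every invocation of the lemma, is your corrected formulation: $N$ is normal in $\Aut(X,E)$, its finite subgroups are cyclic, and the relevant $G$ arises as a kernel $S_k\cap N$ of the restriction of a finite group to $E$, whence normality in $S_k$ is automatic. So your proposal proves the statement in the form in which it is needed, and correctly diagnoses (and repairs) the gap in the paper's one-line normality step.
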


\begin{proof}
    By~\cite[Corollary 2.13]{BFMS20} we know that \(G\) is cyclic. For normality, let \(h\in\Aut(X,E), g\in G\), and \(x\in E\). Then \(h^{-1}(x)\in E\) so \((hg h^{-1})(x)=(hg)(h^{-1}(x))=h(h^{-1}(x))=x\).
\end{proof}

\begin{lemma}\label{lem:A_8-CY-surface}
    Let $H$ be a finite group and $H\rightarrow A_8$ a surjective group homomorphism. 
    Let $(X,\Gamma)$ be a log canonical Calabi--Yau surface. 
    Then, $(X,\Gamma)$ does not admit a faithful $H$-action.
\end{lemma}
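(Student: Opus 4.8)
The plan is to argue by contradiction: suppose $H$ acts faithfully on a log canonical Calabi--Yau surface $(X,\Gamma)$ and fix a surjection $\psi\colon H\twoheadrightarrow A_8$. The two features of $A_8$ that drive everything are that it is a nonabelian \emph{simple} group and that it is \emph{perfect}, of order $|A_8|=20160$. These let us propagate the hypothesis through the geometric structures attached to $(X,\Gamma)$: whenever $H$ maps to the automorphism group of an auxiliary object whose finite symmetry is ``small'' (solvable, or of order $<20160$), the kernel of that map must again surject onto $A_8$, since the image cannot have $A_8$ as a subquotient. After replacing $(X,\Gamma)$ by an $H$-equivariant dlt modification, which is crepant and hence preserves both the Calabi--Yau condition and the coregularity by \Cref{lem:coreg-crep-bir}, I would split into two cases according to whether $\lfloor\Gamma\rfloor$ is empty.

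\emph{Case 1: $\lfloor\Gamma\rfloor\neq 0$.} Here the dual complex $\mathcal{D}(X,\Gamma)$ is nonempty of dimension $0$ or $1$, and by \cite{FS20} together with the connectivity from \cite{KX16,dFKX17} it is a connected pseudomanifold, hence a point, two points, a segment, or a circle. In every case a finite group acting on it has solvable image (trivial, $\zz/2$, or dihedral), so the kernel $N_1$ of $H\to\Aut(\mathcal{D})$ still surjects onto $A_8$ and fixes each boundary component. Choosing one such component $S$ and applying adjunction, $(S,\Gamma_S)$ is a one-dimensional log Calabi--Yau pair, so $S\cong\pp^1$ with a degree-$2$ boundary or $S$ is elliptic with $\Gamma_S=0$; in either case $\Aut(S,\Gamma_S)$ has only solvable finite subgroups. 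Thus the kernel $N_2$ of $N_1\to\Aut(S,\Gamma_S)$ surjects onto $A_8$ and fixes $S$ pointwise. But \Cref{lem:normal+cyclic} forces $N_2$ to be cyclic, contradicting $N_2\twoheadrightarrow A_8$.

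\emph{Case 2: $\lfloor\Gamma\rfloor=0$, i.e.\ the pair is klt.} Since $-K_X\sim_\qq\Gamma\geq 0$, the surface $X$ is uniruled exactly when $\Gamma\neq 0$ (otherwise $K_X$ and $-K_X$ are both pseudoeffective, forcing $\Gamma=0$ and $K_X\sim_\qq 0$). If $X$ is not uniruled, then $\Gamma=0$ and $X$ is a klt Calabi--Yau surface; passing to its minimal resolution (of Kodaira dimension $0$) and, in the Enriques and bielliptic cases, to the canonical K3 or abelian cover, the $H$-action lifts to a group $\widehat{H}\twoheadrightarrow A_8$ acting on a K3 or abelian surface. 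Because $A_8$ is perfect, the symplectic subgroup $\widehat{H}_{\mathrm{symp}}$ (the kernel of the action on $H^{2,0}$) still surjects onto $A_8$; but finite symplectic automorphism groups of K3 surfaces have order at most $960$ by \cite{Muk88}, and on abelian surfaces they inject into $\mathrm{SL}_2(\cc)$ and so have order at most $120$ by \cite{Fuj88}---both far below $20160$. If instead $X$ is uniruled, I would run an $H$-equivariant $K_X$-MMP to a Mori fiber space $X''\to C$. When $C$ is a point, $X''$ is a rational klt del Pezzo of $H$-Picard number one, so $H\hookrightarrow\Cr(2)$ and the Jordan property of the plane Cremona group \cite{PS16} (Jordan constant below $20160$) rules out $A_8$ as a subquotient. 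When $\dim C=1$, the canonical bundle formula applied to $K_{X''}+\Gamma''\sim_\qq 0$ forces $g(C)\leq 1$; then $\Aut(C)$ has no $A_8$ subquotient, so the kernel of $H\to\Aut(C)$ surjects onto $A_8$ while acting faithfully on the generic fiber, a form of $\pp^1$ over $\cc(C)$, embedding into a form of $\PGL_2(\cc(C))$ whose finite subgroups have order at most $60$---again impossible.

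The clean part is Case 1, which is precisely the setting the dual-complex and adjunction machinery (\Cref{lem:normal+cyclic}, \Cref{lem:coreg-crep-bir}) is built for. The real work, and the step I expect to be the main obstacle, is the klt Case 2: there is no boundary divisor to feed into \Cref{lem:normal+cyclic}, so one must instead exploit the global structure of the surface. The uniruled sub-case is the crux, requiring an equivariant MMP, control of the base genus via the canonical bundle formula, and the Jordan property of $\Cr(2)$ for the del Pezzo end---and it is exactly here that the bookkeeping of ``$H\twoheadrightarrow A_8$'' rather than an honest $A_8$-action must be tracked most carefully through each contraction and fibration.
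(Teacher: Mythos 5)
Your Case~1 (dual complex of the dlt modification, adjunction to a coefficient-one component, then \Cref{lem:normal+cyclic} to force a cyclic group surjecting onto $A_8$) is correct and is in fact a genuinely different route from the paper, which handles the nonzero-boundary case by an equivariant MMP and an analysis of the resulting Mori fiber space; your version is closer to the technique the paper reserves for dimensions $3$ and $4$. The genuine gap is in Case~2, and it sits precisely where you predicted the main obstacle would be. Your dichotomy ``$X$ is uniruled exactly when $\Gamma \neq 0$'' is false in one direction: if $X$ is not uniruled then indeed $K_X$ is pseudoeffective and $\Gamma = 0$, but the converse fails for klt surfaces. There exist uniruled (even rational) klt surfaces with $\Gamma = 0$ and $K_X \sim_{\qq} 0$, namely Zhang's log Enriques surfaces, which have klt but non-canonical singularities. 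These fall through both of your branches. In the non-uniruled branch you assume the minimal resolution has Kodaira dimension $0$; here the minimal resolution $\tilde{X}$ satisfies $K_{\tilde{X}} \equiv -E$ with $E > 0$ the (effective) discrepancy divisor, so $\tilde{X}$ is uniruled, not of Kodaira dimension $0$. In the uniruled branch you run a $K_X$-MMP to a Mori fiber space, but $K_X \sim_{\qq} 0$ is nef, so the MMP is trivial and no Mori fiber space exists. The paper closes exactly this case: on an equivariant resolution write $\varphi^*K_X = K_Y + D_Y$, and if $D_Y \neq 0$ (the non-canonical case) run the MMP argument on the log Calabi--Yau pair $(Y, D_Y)$, which now has nonzero boundary; only when $D_Y = 0$ does one pass to the K3/abelian analysis. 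Your proof needs this extra step inserted.

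There are also several smaller inaccuracies, none fatal but each requiring repair. On an abelian surface, translations act trivially on $H^{2,0}$, so the symplectic subgroup contains the translation part and does \emph{not} inject into $\mathrm{SL}_2(\C)$; one must first quotient by the (abelian, normal) group of translations, using simplicity of $A_8$, as the paper does via the exact sequence with $\Aut(\tilde{Y},0)$. Your quantitative claims are wrong as stated: finite subgroups of $\mathrm{SL}_2(\C)$ and of $\PGL_2$ over a function field have unbounded order (cyclic and dihedral families), so ``order at most $120$'' and ``order at most $60$'' are false; what you actually need --- no $A_8$ subquotient --- follows from the classification of such subgroups (cyclic, dihedral, $A_4$, $S_4$, $A_5$ and their binary covers). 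Likewise $\Aut(S,\Gamma_S)$ for a one-dimensional log Calabi--Yau pair need not have only solvable finite subgroups (an icosahedral boundary on $\pp^1$ gives $A_5$), though again no $A_8$ quotient arises, so your Case~1 conclusion survives. Finally, \cite{PS16} proves the Jordan property of $\Cr(2)$ but does not supply an explicit constant below $20160$; either cite the explicit computation of the Jordan constant of $\Cr(2)$ or, as the paper does, quotient by the kernel of $H \to A_8$ (the induced $A_8$-action on the quotient is faithful by Galois theory of function fields) and invoke the Dolgachev--Iskovskikh classification, which excludes $A_8 \leqslant \Cr(2)$.
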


\begin{proof}
We proceed by contradiction.
Let $(X,\Gamma)$ be a log canonical Calabi--Yau surface that admits a faithful action by $H$.
By passing to an $H$-equivariant dlt modification, we may assume that $(X,\Gamma)$ is dlt. 

First, assume that $\Gamma\neq 0$. 
Then, we may run an $H$-equivariant $K_X$-MMP that terminates with a Mori fiber space.
Let $X\rightarrow X_1\rightarrow \cdots \rightarrow X_k$ be the steps of this MMP and $X_k\rightarrow Z$ be the Mori fiber space. 
Let $\Gamma_k$ be the push-forward of $\Gamma$ on $X_k$. 
First, assume that $Z$ is a point.
So $X_k$ is a Fano surface. 
Let $N$ be the kernel of the homomorphism $H\rightarrow A_8$.
The quotient $Y\coloneqq X_k/N$ is a Fano type surface that admits a $A_8$-action. 
In particular, an $A_8$-equivariant resolution of $Y$ is a smooth rational surface with a faithful $A_8$-action. 
This is impossible, as the plane Cremona group does not admit a subgroup isomorphic to $A_8$ (see~\cite{DI09}).
Now, assume that $Z$ is a curve. We have a short exact sequence
\[\xymatrix{
1 \ar[r] &  G_F \ar[r] &  H \ar[r] &  G_Z \ar[r] & 1,}
\]
where $G_F$ acts on the general fiber of $X_k\rightarrow Z$
and $G_Z$ acts on $Z$.
By the canonical bundle formula, the curve $Z$ has genus either $0$ or $1$.
Note that either $G_F$ or $G_Z$ admits a surjective homomorphism to $A_8$.
Thus, we get a faithful action on a curve by a group $G$ that surjects onto $A_8$.
By taking the quotient by the kernel of $G\rightarrow A_8$, we obtain a curve of genus at most $1$
that admits a faithful $A_8$-action.
This is impossible due to the classification of finite subgroups of $\PGL_2(\cc)$ and the classification of finite actions on genus $1$ curves; indeed, the automorphism group of a genus $1$ curve $C$ is a semidirect product of an abelian translation group with a cyclic group of order $2,4,$ or $6$ (see also the proof of \Cref{prop:low_dim_wci}).

Now, assume that $\Gamma=0$.
Then $X$ is a klt Calabi--Yau surface.
Let $Y\rightarrow X$ be a $H$-equivariant resolution
and $\varphi^*(K_X)=K_Y+D_Y$.
If $D_Y\neq 0$, then we proceed as in the previous paragraph. 
Thus, we may assume that $X$ has canonical singularities and $Y$ is a smooth surface with $K_Y\sim_\qq 0$.
Let $\tilde{Y}\rightarrow Y$ be the universal cover of $Y$. Then, there is a finite group $H_Y$ acting on $\tilde{Y}$ that surjects onto $A_8$.
By construction $\tilde{Y}$ is either an abelian surface or a K3 surface.
First, assume that $\tilde{Y}$ is a K3 surface.
Then, as in the proof of \Cref{prop:low_dim_wci}, we have an exact sequence 
\[\xymatrix{
1 \ar[r] &  H_Y^{\rm symp}
\ar[r] &  H_Y \ar[r] &  
\zz/m \ar[r] &  1, }
\]
where $H_Y^{\rm symp}$ is a finite group acting by symplectic automorphisms on the K3 surface
$\tilde{Y}$.
We conclude that 
$H_Y^{\rm symp}$ surjects onto $A_8$.
In particular $|H_Y^{\rm symp}|\geq 8!/2$.
This leads to a contradiction by the classification of finite groups acting sympletically on K3 surfaces (see~\cite{Muk88}).
Now, assume that $\tilde{Y}$ is an abelian surface and so $\tilde{Y}\rightarrow Y$ is an isomorphism.
Let $T_Y\leqslant \Aut(Y)$ be the group of translations
of the abelian surface.
Then, we have an exact sequence
\[\xymatrix{
1\ar[r] &  H_Y\cap T_Y \ar[r] &  H_Y \ar[r] & G_Y \ar[r] & 1}.
\]
Since $T_Y$ is abelian, we conclude that $H_Y\cap T_Y$ does not surject onto $A_8$.
So, $G_Y$ must surject onto $A_8$.
Observe that $G_Y$ is a group of automorphisms of the abelian surface
that fixes the identity and surjects onto $A_8$.
In this case, we get a contradiction
by the classification
of finite groups acting on abelian surfaces (see~\cite{Fuj88}).
\end{proof}

\begin{lemma}\label{lem:A8-CY-3fold}
    Let $H$ be a finite group and $H\rightarrow A_8$ a surjective group homomorphism.
    Let $(X,\Gamma)$ be a log canonical Calabi--Yau $3$-fold with $\Gamma\neq 0$. 
    Then, $(X,\Gamma)$ does not admit a faithful $H$-action.
\end{lemma}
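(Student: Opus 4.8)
The plan is to argue by contradiction, running a Minimal Model Program on $X$ exactly as in the proof of \Cref{lem:A_8-CY-surface} but one dimension higher, and reducing each output to the surface and curve cases already understood. Suppose $(X,\Gamma)$ is a log canonical Calabi--Yau $3$-fold with $\Gamma\neq 0$ carrying a faithful $H$-action with $H\twoheadrightarrow A_8$. Passing to an $H$-equivariant dlt modification, we may assume $(X,\Gamma)$ is $\qq$-factorial dlt, still Calabi--Yau with $\Gamma\neq 0$. Since $K_X\sim_\qq-\Gamma$ with $\Gamma$ effective and nonzero, $K_X$ is not pseudo-effective, so an $H$-equivariant $K_X$-MMP terminates with a Mori fiber space $f\colon X_k\to Z$. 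Writing $\Gamma_k$ for the pushforward of $\Gamma$, the pair $(X_k,\Gamma_k)$ is log canonical Calabi--Yau and crepant to $(X,\Gamma)$, and as $-K_{X_k}$ is $f$-ample and $K_{X_k}+\Gamma_k\sim_\qq 0$, the boundary $\Gamma_k\sim_\qq-K_{X_k}$ is $f$-ample; in particular $\Gamma_k\neq 0$. Everything is $H$-equivariant and $\dim Z\in\{0,1,2\}$.

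If $\dim Z=0$, then $X_k$ is a $3$-fold of Fano type, hence rationally connected. Setting $N\coloneqq\ker(H\to A_8)$, the quotient $X_k/N$ is of Fano type with a faithful $A_8$-action, so an $A_8$-equivariant resolution is a smooth rationally connected $3$-fold with a faithful $A_8$-action. This is impossible, since $A_8$ does not act birationally on a rationally connected threefold: indeed $A_7$ is the largest alternating group in $\Cr(3)$ \cite{Pro12}, and this persists for rationally connected threefolds \cite{Prokhorov_space_Cremona,BCDP-23}. For $\dim Z\geq 1$, the action of $H$ on $f$ gives an exact sequence
\[\xymatrix{1\ar[r] & G_F\ar[r] & H\ar[r] & G_Z\ar[r] & 1,}\]
with $G_F$ acting faithfully on the generic fiber of $f$ and $G_Z$ faithfully on $Z$. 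As $G_F\trianglelefteq H$, its image in $A_8$ is normal, hence trivial or all of $A_8$; thus either $G_F\twoheadrightarrow A_8$ or $G_Z\twoheadrightarrow A_8$.

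Consider $\dim Z=1$. Then $Z$ is a curve, and the canonical bundle formula $K_Z+B_Z+M_Z\sim_\qq 0$ forces $g(Z)\leq 1$, while adjunction to a general fiber makes $(F,\Gamma_k|_F)$ a log Calabi--Yau surface. If $G_F\twoheadrightarrow A_8$, this contradicts \Cref{lem:A_8-CY-surface}; if $G_Z\twoheadrightarrow A_8$, then quotienting by $\ker(G_Z\to A_8)$ produces a faithful $A_8$-action on a curve of genus at most $1$, impossible as in \Cref{lem:A_8-CY-surface}. Now consider $\dim Z=2$, so $f$ is a conic bundle with generic fiber $\pp^1$. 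Then $G_F$ embeds into $\PGL_2$ of the generic fiber, and since no finite subgroup of $\PGL_2$ surjects onto $A_8$, the image of $G_F$ is trivial; hence $G_Z\twoheadrightarrow A_8$ acts faithfully on the surface $Z$.

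The surface base in the case $\dim Z=2$ is the main obstacle, because $Z$ is not a priori a genuine log Calabi--Yau surface, only a generalized one: the canonical bundle formula gives a $G_Z$-equivariant structure $K_Z+B_Z+M_Z\sim_\qq 0$ with $M_Z$ nef and $B_Z\geq 0$, so $-K_Z$ is pseudo-effective. I would handle this by running a $G_Z$-equivariant MMP on a resolution of $Z$. If it ends in a Mori fiber space, then $Z$ is uniruled; its $G_Z$-equivariant MRC fibration $Z\dashrightarrow C$ has generic fiber $\pp^1$ and, by the canonical bundle formula applied to this fibration, base $C$ with $g(C)\leq 1$, so the same $\PGL_2$-versus-curve dichotomy as above (or, when $C$ is a point, $A_8\not\hookrightarrow\Cr(2)$ \cite{DI09}) yields a contradiction. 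Otherwise the MMP ends in a minimal surface with numerically trivial canonical class; since $-K_Z$ is pseudo-effective this forces $B_Z=0$ and $M_Z\equiv 0$, so the minimal model is a genuine log canonical Calabi--Yau surface carrying a group surjecting onto $A_8$, contradicting \Cref{lem:A_8-CY-surface}. This completes the contradiction in every case.
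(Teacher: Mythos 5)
Your proposal has the same skeleton as the paper's proof: argue by contradiction, run an $H$-equivariant $K_X$-MMP to a Mori fiber space $X_k \to Z$, use simplicity of $A_8$ to see that either the fiberwise group or the base group surjects onto $A_8$, and reduce to \Cref{lem:A_8-CY-surface} (plus the rationally connected threefold result \cite{BCDP-23} when $Z$ is a point). The one genuine divergence is your treatment of the conic-bundle case $\dim Z = 2$: the paper handles every positive-dimensional base uniformly by citing an \emph{equivariant canonical bundle formula} (\cite[Lemma 2.32]{Mor21}), which outputs an honest $G_Z$-equivariant boundary $B_Z$ such that $(Z,B_Z)$ is log canonical and log Calabi--Yau --- precisely dispatching the ``only a generalized pair'' worry that your final paragraph works around by hand. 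Your substitute argument is the right idea but, as written, has a gap in the non-uniruled branch: you run the $K$-MMP on a resolution $\tilde{Z}$ of $Z$, and pseudo-effectivity of $-K_Z$ does not pass to $\tilde{Z}$ or to its minimal model. Crepantly pulling back $K_Z + B_Z + M_Z \sim_\qq 0$ to $\tilde{Z}$ can introduce negative coefficients on exceptional divisors (those of generalized log discrepancy greater than $1$), and such divisors can a priori survive the MMP; on the minimal model $Z'$ you then only know $K_{Z'} + B_{Z'} + M_{Z'} \sim_\qq 0$ with $B_{Z'}$ possibly non-effective and $M_{Z'}$ merely pseudo-effective, so $K_{Z'}$ nef does not immediately force $K_{Z'} \equiv 0$, $B_{Z'} = 0$, $M_{Z'} \equiv 0$ as you claim.

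The fix is easy: run the $G_Z$-equivariant $K_Z$-MMP on $Z$ itself rather than on a resolution. As the base of a Mori fiber space from the $\qq$-factorial dlt $X_k$, the surface $Z$ is $\qq$-factorial with log canonical singularities, so the surface MMP runs; since it consists only of divisorial contractions, pushforward preserves effectivity of $B$, pseudo-effectivity of $M$ and of $-K$, and both of your endings then go through, with abundance for surfaces giving $K \sim_\qq 0$ on the minimal model, after which \Cref{lem:A_8-CY-surface} applies (via a dlt modification if the output is not klt). Note also that in your uniruled branch the assertion $g(C) \leq 1$ for the MRC base quietly invokes the canonical bundle formula for generalized pairs --- so you have not actually avoided that machinery, only relocated it; citing the equivariant formula once at the outset, as the paper does, is both shorter and covers your $\dim Z = 1$ case simultaneously. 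On the other hand, your opening steps (the dlt modification before running the MMP, the explicit quotient by $\ker(H \twoheadrightarrow A_8)$ when $Z$ is a point, and the normality argument showing the image of $G_F$ in $A_8$ is trivial or everything) are correct and in fact slightly more careful than the paper's terse write-up.
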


\begin{proof}
By means of contradiction, assume that a Calabi--Yau $3$-fold $(X,\Gamma)$ with a faithful $A_8$-action exists.
We run a $H$-equivariant $K_X$-MMP.
Since $\Gamma\neq 0$, this MMP must terminate with a Mori fiber space $X_k\rightarrow Z$.
By replacing $X$ with $X_k$ 
and $\Gamma$ with its push-forward on $X_k$, we may assume that $X$ itself admits a Mori fiber space $X\rightarrow Z$.
If $Z$ is a point, then $X$ is a Fano variety. But $A_8$ does not act faithfully on a rationally connected $3$-fold~\cite{BCDP-23}, so we get a contradiction.
Assume that $Z$ is positive-dimensional.
We have a short exact sequence
\[\xymatrix{
1 \ar[r] &  G_F \ar[r] & H \ar[r] & G_Z\ar[r] &  1,}
\]
where $G_F$ acts on the general fiber of $X_k\rightarrow Z$ and $G_Z$ acts on the base $Z$.
By an equivariant version of the canonical bundle formula (see~\cite[Lemma 2.32]{Mor21}), we obtain a $G_Z$-equivariant boundary $B_Z$ such that 
$(Z,B_Z)$ is Calabi--Yau and log canonical. Note that either $G_F$ or $G_Z$ admits a surjective homomorphism onto $A_8$.
In either case, we get a group $G$ surjecting onto $A_8$ and acting on a log Calabi--Yau pair of dimension at most $2$. This contradicts~\Cref{lem:A_8-CY-surface}.
\end{proof}

\begin{lemma}\label{lem:finite-subgroup-SO}
Let $G$ be a finite subgroup of ${\on O}(k)$ for $k\leq 4$. Then $G$ does not admit a surjective homomorphism to $A_8$.
\end{lemma}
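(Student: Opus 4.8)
The plan is to prove something slightly stronger: every non-abelian composition factor of a finite subgroup of $\on{O}(k)$ for $k \leq 4$ is isomorphic to $A_5$. Since $A_8$ is a non-abelian simple group (as $8 \geq 5$) and $A_8 \not\cong A_5$, a surjection $G \twoheadrightarrow A_8$ would force $A_8$ to occur as a composition factor of $G$, which the strengthened statement forbids. The first reduction is to pass from $\on{O}(k)$ to $\mathrm{SO}(k)$: for a finite $G \leqslant \on{O}(k)$, the subgroup $G^+ \coloneqq G \cap \mathrm{SO}(k)$ is normal of index at most $2$, so the composition factors of $G$ are those of $G^+$ together with at most one copy of the abelian factor $\zz/2$. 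Hence $G$ and $G^+$ have the same non-abelian composition factors, and it suffices to treat finite subgroups of $\mathrm{SO}(k)$.

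For $k \leq 3$ this is immediate from the classical classification of finite rotation groups: finite subgroups of $\mathrm{SO}(1)$ and $\mathrm{SO}(2)$ are cyclic, and the finite subgroups of $\mathrm{SO}(3)$ are cyclic, dihedral, $A_4$, $S_4$, and $A_5$; in all of these the only non-abelian composition factor that appears is $A_5$. The essential case is $k = 4$. Here I would use the spin double cover $\mathrm{Spin}(4) \cong \mathrm{SU}(2) \times \mathrm{SU}(2) \to \mathrm{SO}(4)$ and lift a finite $G \leqslant \mathrm{SO}(4)$ to its preimage $\tilde{G} \leqslant \mathrm{SU}(2) \times \mathrm{SU}(2)$, a central extension $1 \to \zz/2 \to \tilde{G} \to G \to 1$; again $\tilde{G}$ and $G$ share the same non-abelian composition factors. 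Writing $p_1$ for projection to the first factor, the exact sequence
\[
1 \to \tilde{G} \cap (1 \times \mathrm{SU}(2)) \to \tilde{G} \to p_1(\tilde{G}) \to 1
\]
exhibits $\tilde{G}$ via two finite subgroups of $\mathrm{SU}(2)$. The finite subgroups of $\mathrm{SU}(2)$ are the binary polyhedral groups, all of which are solvable except the binary icosahedral group $2I \cong \mathrm{SL}_2(\F_5)$, whose unique non-abelian composition factor is $A_5 \cong \mathrm{SL}_2(\F_5)/\{\pm 1\}$. Therefore every non-abelian composition factor of $\tilde{G}$ — being drawn from those of the kernel and quotient above — is $A_5$, and the same holds for $G$.

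The main obstacle is the $\mathrm{SO}(4)$ case, since unlike $\mathrm{SO}(3)$ its finite subgroups are not small enough to dismiss by order alone ($\lvert A_8 \rvert = 20160$); the argument relies on the identification $\mathrm{Spin}(4) \cong \mathrm{SU}(2) \times \mathrm{SU}(2)$ together with the classification of finite subgroups of $\mathrm{SU}(2)$ to control composition factors. Everything else is a routine bookkeeping of composition factors across central and index-two extensions.
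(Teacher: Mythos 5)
Your proof is correct and takes essentially the same approach as the paper: reduce from $\on{O}(k)$ to $\SO(k)$, dispose of $k \leq 3$ via the classification of finite rotation groups, and for $k = 4$ exploit the splitting of $\SO(4)$, up to a central $\zz/2$, into two rank-one factors. The only cosmetic difference is direction: the paper passes \emph{down} the central quotient $\SO(4) \to \SO(3) \times \SO(3)$ and derives a contradiction from a finite subgroup of $\SO(3)$ surjecting onto $A_8$, while you pass \emph{up} to the double cover $\mathrm{Spin}(4) \cong \mathrm{SU}(2) \times \mathrm{SU}(2)$ and track non-abelian composition factors through the binary polyhedral groups --- the same structural fact about $\SO(4)$ read in the opposite direction, with your version yielding the slightly stronger conclusion that $A_5$ is the only possible non-abelian composition factor.
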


\begin{proof}
It is enough to consider finite subgroups of $\SO(k)$ for $k\leq 4$. 
The statement is clear for $k\leq 3$.
Indeed, a finite subgroup of $\SO(k)$ with $k\leq 3$ is 
cyclic, dihedral, icosahedral, tetrahedral, or octahedral.
For \(k=4\), recall that we have a short exact sequence
\[\xymatrix{
1\ar[r] &  \zz/2 \ar[r] &  \SO(4)\ar[r] &  \SO(3)\times \SO(3)\ar[r] &  1. }
\]
Thus, if there is a finite subgroup of ${\on O}(4)$ that surjects onto $A_8$, then there
is a finite subgroup of $\SO(3)$
that surjects onto $A_8$.
This leads to a contradiction.
\end{proof}

Now, we are ready to prove the boundedness of $S_8$-equivariant Fano $4$-folds. In what follows, we show a version of \Cref{introthm:S8-4-fold-bounded} for log pairs. This version for log pairs will be used to prove \Cref{introthm:S8-5-dim-klt}.

\begin{theorem}\label{thm:S8-4-log-pair-bounded}
Let $\mathcal{B}\subset [0,1]$ be a set satisfying the DCC and $\overline{\mathcal{B}}\subset \qq$.
Let $\mathcal{F}_{4,8,\mathcal{B}}$ be the class 
of $4$-dimensional $S_8$-equivariant klt pairs $(X,B)$
for which $-(K_X+B)$ is ample
and ${\rm coeff}(B)\subset \mathcal{B}$.
Then the class $\mathcal{F}_{4,8,\mathcal{B}}$ is log bounded.
\end{theorem}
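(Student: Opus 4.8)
The plan is to reduce the statement to Birkar's boundedness theorem. By \cite{Bir21}, for each fixed $\epsilon>0$ the subclass of $\epsilon$-log canonical members of $\mathcal{F}_{4,8,\mathcal{B}}$ is log bounded, using that $\mathcal{B}$ satisfies the DCC with $\overline{\mathcal{B}}\subset\qq$. Hence it suffices to exhibit a uniform constant $\epsilon_0>0$, depending only on $\mathcal{B}$, such that every $(X,B)\in\mathcal{F}_{4,8,\mathcal{B}}$ is $\epsilon_0$-log canonical. I argue by contradiction: if no such $\epsilon_0$ exists, I fix a sequence $(X_i,B_i)\in\mathcal{F}_{4,8,\mathcal{B}}$ with $\mathrm{mld}(X_i,B_i)\to 0$, and I work throughout with the simple subgroup $A_8\leqslant S_8$.

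The first step is to convert the smallness of the minimal log discrepancy into a log canonical center. Using the theory of (equivariant) complements together with the fact that a divisor of small log discrepancy becomes a log canonical place after raising the boundary, I would produce, for $i\gg 0$, an $S_8$-equivariant boundary $\Gamma_i\geq B_i$ with $(X_i,\Gamma_i)$ log Calabi--Yau and $\lfloor\Gamma_i\rfloor\neq 0$. Passing to an $S_8$-equivariant dlt modification (the coregularity is unchanged by \Cref{lem:coreg-crep-bir}, and by \Cref{lem:dim-dc-quot} the relevant dimensions are insensitive to the finite quotients that appear), I may assume $(X_i,\Gamma_i)$ is dlt of coregularity $c_i\leq 3$, and after passing to a subsequence that $c_i\equiv c$ is constant.

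The heart of the argument is the induced $A_8$-action on the dual complex $\mathcal{D}(X_i,\Gamma_i)$, which is a pseudo-manifold of dimension $3-c$ and, in this range, an orbifold by \cite{KX16}. Since $A_8$ is simple, it acts either faithfully or trivially. Suppose first the action is faithful and $c\leq 2$, so $\mathcal{D}(X_i,\Gamma_i)$ is a positive-dimensional sphere quotient $S^{3-c}/F$; lifting the action through the universal orbifold cover yields a finite group surjecting onto $A_8$ and acting on $S^{3-c}$, hence, by \cite{zim18} and the geometrization of finite spherical actions, embedding in $\mathrm{O}(4-c)\leqslant\mathrm{O}(4)$. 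This contradicts \Cref{lem:finite-subgroup-SO}. Suppose instead the action is trivial, which is forced when $c=3$ since then $\mathcal{D}(X_i,\Gamma_i)$ is $0$-dimensional. Then $A_8$ preserves a minimal log canonical center $W$ of dimension $c$, and by \Cref{lem:normal+cyclic} together with the simplicity of $A_8$ it acts faithfully on the log Calabi--Yau pair $(W,\Gamma_W)$ obtained by adjunction. If $c\leq 2$ this contradicts \Cref{lem:A_8-CY-surface}, while if $c=3$ and $\Gamma_W\neq 0$ it contradicts \Cref{lem:A8-CY-3fold}.

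The one configuration not settled by the above is $c=3$ with $\Gamma_W=0$, that is, a minimal log canonical center which is a klt Calabi--Yau threefold carrying a faithful $A_8$-action; this is the main obstacle, since none of the lemmas forbids $A_8$ from acting on a strict Calabi--Yau threefold. I expect to dispose of it by exploiting that the completion $(X_i,\Gamma_i)$ was manufactured from a Fano pair of very small mld: the different $\Gamma_W$ then picks up a nonzero contribution, either from the boundary that was raised or from the singularities of $X_i$ along $W$, so that in fact $\Gamma_W\neq 0$ and \Cref{lem:A8-CY-3fold} applies. The complementary ``intrinsic'' possibility $\Gamma_W=0$ corresponds to pairs whose mld is bounded below, which are already covered by \cite{Bir21}. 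Ruling out all cases shows $\mathrm{mld}(X_i,B_i)\not\to 0$, yielding the uniform $\epsilon_0$ and completing the reduction to Birkar's theorem.
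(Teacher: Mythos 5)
Your overall machinery (small mld forces a strictly log canonical $N$-complement, then dual complexes, $A_8$ on sphere quotients via \cite{KX16}, \cite{Par21}, \cite{zim18}, and the Calabi--Yau adjunction contradictions of \Cref{lem:A_8-CY-surface} and \Cref{lem:A8-CY-3fold}) does reproduce much of the paper's case analysis, but your master reduction is the problem: the theorem does \emph{not} follow from, and the paper never proves, a uniform $\epsilon_0$ such that every pair $(X,B)\in\mathcal{F}_{4,8,\mathcal{B}}$ is $\epsilon_0$-lc. The case you yourself flag as unresolved --- a minimal log canonical center that is a klt Calabi--Yau threefold with faithful $A_8$-action, i.e.\ an $S_8$-invariant plt $0$-complement $\Gamma=E$ with trivial different --- is not a pathology to be contradicted away; it is a genuinely admissible configuration, and it is exactly where your reduction breaks. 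Your hope that ``the different picks up a nonzero contribution'' fails precisely when $B$ is supported on $E$ itself: if $\Gamma=E$ then $\Gamma-E=0$ and $\mathrm{Diff}_E(0)$ can vanish, and in that configuration one may take $B=cE$ with $c\nearrow 1$ inside a DCC set $\mathcal{B}$, so that $(X,cE)$ stays klt with $-(K_X+cE)\sim_{\qq}(1-c)(-K_X)$ ample while $\mathrm{mld}(X,cE)\leq 1-c\to 0$. Thus the pairs in the class can have mld accumulating to $0$ even though the class is log bounded, and your fallback (``$\Gamma_W=0$ corresponds to mld bounded below, covered by \cite{Bir21}'') is circular inside your contradiction setup. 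The paper handles this subcase constructively rather than by contradiction: since $(X,\Gamma)$ is plt with $E$ its unique lc place and $N(K_X+\Gamma)\sim 0$, divisibility of log discrepancies by $1/N$ forces $a_F(X)\geq \frac{1}{N}$ for all $F\neq E$, so the \emph{variety} $X$ (boundary discarded) is $\frac{1}{N}$-lc and bounded by \cite[Theorem 1.1]{Bir21}; log boundedness of $(X,B)$ then follows by bounding $\mathcal{A}_X^3\cdot B$ via ampleness of $-(K_X+B)$ and the positive minimum of the coefficient set. In other words, the correct uniform statement is an mld bound (or exceptionality) for the underlying varieties and quotients, not for the pairs.

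Two further divergences from the paper's proof are worth noting. First, the paper quotients by $S_8$ at the outset, putting $\mathrm{coeff}(B_Y)$ in a DCC set $\mathcal{C}$ via \cite[Lemma 5.2]{FM20} and running complements and the coregularity trichotomy on $(Y,B_Y)$; in the coregularity-$4$ case boundedness comes from Birkar's boundedness of \emph{exceptional} Fano pairs \cite[Theorem 1.11]{Bir19}, again with no mld bound on the pair. (Your exceptional case is actually the one where your $\epsilon_0$ heuristic does work, since a strictly lc complement would contradict exceptionality, but that covers only this case.) Second, your sphere-quotient lifting argument does not see the zero-dimensional dual complex with \emph{two} points: an $\mathbb{S}^0$ has no useful orbifold cover, a trivial $A_8$-action on two disjoint lc centers can still have vanishing different, and the paper needs a separate equivariant MMP argument there (the $\pp^1$-fibration with $A_8$ acting on $(\pp^1,\{0\}+\{\infty\})$ or on the $3$-dimensional base, excluded by \cite{BCDP-23}). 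Relatedly, \Cref{lem:normal+cyclic} applies to prime divisors, so faithfulness of the action on lower-dimensional minimal centers does not follow directly as you assert; the paper always performs adjunction to a boundary divisor $E_v$ and uses that the resulting three-dimensional log Calabi--Yau pair has nonzero boundary whenever the dual complex is positive-dimensional.
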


\begin{proof}[Proof of \Cref{thm:S8-4-log-pair-bounded}]
We will show that the class of \(S_8\)-equivariant klt log Fano \(4\)-dimensional pairs is log bounded. 

Let $(X,B)$ be a klt \(S_8\)-equivariant log Fano \(4\)-dimensional pair with ${\rm coeff}(B)\subset \mathcal{B}$, and let \(\pi\colon X\to Y\) be the quotient. By Riemann--Hurwitz we can write \[\pi^*(K_Y+B_Y)=K_X+B\] where \(B_Y\) is an effective divisor.
By~\cite[Lemma 5.2]{FM20}, there exists a set $\mathcal{C}\subset [0,1]$ satisfying the DCC and $\overline{\mathcal{C}}\subset \mathbb{Q}$ 
such that ${\rm coeff}(B_Y)\in \mathcal{C}$.
The set $\mathcal{C}$ only depends on $\mathcal{B}$; hence, it is independent of the chosen pair $(X,B)$.
So \((Y,B_Y)\) is a klt log Fano pair. Indeed, a pair is klt if and only if a finite pullback of it is klt, see~\cite[Proposition 2.11]{Mor20c}. We proceed in three cases, depending on \(\coreg(Y,B_Y)\). \\

\textit{Case 1}: In this case we assume that the coregularity of the pair \((Y,B_Y)\) is \(4\). 

In this case, every log Calabi--Yau structure \((Y,\Gamma_Y)\) with \(\Gamma_Y\geq B_Y\) satisfies that \((Y,\Gamma_Y)\) is klt. Hence \((Y,B_Y)\) is an exceptional Fano pair~\cite[Section 2.15]{Bir19}. By Birkar's boundedness of exceptional Fano pairs~\cite[Theorem 1.11]{Bir19}, we conclude that \((Y,B_Y)\) is log bounded. That is, there exist constants \(k_0,k_1\) such that for any \((Y,B_Y)\) log Fano klt pair of dimension $4$ and coregularity \(4\), there is a very ample line bundle \(\mathcal A_Y\) on \(Y\) with \[ \mathcal A_Y^4 \leq k_0 \quad \text{ and } \quad \mathcal A_Y^3\cdot B_Y\leq k_1.\] Choose \(A_Y\in |\mathcal A_Y|\) with no components in the branch locus of \(\pi\). Then \(\pi^* A_Y \coloneqq A_X\) satisfies \[A_X^4=(8!)^4 A_Y^4 \leq (8!)^4 k_0,\] so \(X\) is bounded.
In particular, we have $\mathcal{A}_X^3 \cdot -K_X\leq k_2$
for some constant $k_2$ independent of $X$.
On the other hand, note that 
\[
\mathcal{A}_X^3 \cdot (K_X+B) \leq 0, 
\]
so 
\[
\mathcal{A}_X^3 \cdot B \leq \mathcal{A}_X^3 \cdot -K_X\leq k_2.
\]
Since the coefficients of $B$ are bounded below, we conclude that every component of $B$ has degree bounded above with respect to $\mathcal{A}_X$.
Thus, we conclude that the pairs $(X,B)$ are log bounded.\\

\textit{Case 2}: In this case we assume that the coregularity of the pair \((Y,B_Y)\) is \(3\).

By~\cite[Lemma 2.18]{Mor21d}
and~\cite[Theorem 1.2]{FM20}, there exists a constant \(N\) such that the following holds: For any klt log Fano pair \((Y,B_Y)\) with coregularity \(3\), there exists \(\Gamma_Y \geq B_Y\) such that: \begin{itemize} \item \((Y,\Gamma_Y)\) is log canonical, \item \(\mathcal D(Y,\Gamma_Y)\) is zero-dimensional, and \item \(N(K_Y+\Gamma_Y)\sim 0\). \end{itemize} Let \((X,\Gamma)\) be the log Calabi--Yau pair defined by \[K_X+\Gamma = \pi^*(K_Y+\Gamma_Y).\] Then the following hold: \begin{itemize}
    \item \(S_8\leqslant\Aut(X,\Gamma)\),
    \item \(\mathcal D(X,\Gamma)\) is zero-dimensional (by~\Cref{lem:dim-dc-quot}), and
    \item \(N(K_X+\Gamma)\sim 0\).
\end{itemize}
By \cite[Theorem 1.6]{FS20}, \(\mathcal D(X,\Gamma)\) is either one point or two points. First assume \(\mathcal D(X,\Gamma)\) is two points. Let \((X',\Gamma')\to(X,\Gamma)\) be an \(S_8\)-equivariant dlt modification and \(E_0,E_1\subset\lfloor\Gamma'\rfloor\) the two components. Note that \(E_0\) and \(E_1\) are each \(A_8\)-invariant. So we may run an \(A_8\)-equivariant \((K_{X'}+\Gamma'-E_0-E_1)\)-MMP, which terminates with a Mori fiber space, because \(K_{X'}+\Gamma'-E_0-E_1\) is not pseudo-effective. Let
\[\xymatrix{ X' \ar@{-->}[r] & X_1 \ar@{-->}[r] & X_2 \ar@{-->}[r] & \cdots \ar@{-->}[r] & X_k \ar[d] \\ & & & & Z}\]
be the steps of the MMP and \(X_k\to Z\) the \(A_8\)-equivariant Mori fiber space. Let \(E_{0,k}\) and \(E_{1,k}\) be the pushforwards of \(E_0\) and \(E_1\), respectively, on \(X_k\). Since \(E_{0,k}+E_{1,k}\) is ample over \(Z\), either \(E_{0,k}\) or \(E_{1,k}\) is horizontal over \(Z\). Furthermore, they have trivial intersection by the assumption on \(\mathcal D(X,\Gamma)\).
Thus, both divisors $E_{0,k}$ and $E_{1,k}$ must be horizontal over the base, 
otherwise they would have non-trivial intersection.
Since \(\rho^{A_8}(X_k/Z)=1\) and both \(E_{0,k}\) and \(E_{1,k}\) are \(A_8\)-invariant, we conclude that \(E_{0,k}\) and \(E_{1,k}\) are each ample over \(Z\). Then a general fiber \(F\) of \(X_k\to Z\) has dimension \(1\). Indeed, if a general fiber \(F\) of \(X_k\to Z\) has dimension \(\geq 2\), then \({E_{0,k}}|_F\) and \({E_{1,k}}|_F\) intersect non-trivially, leading to a contradiction. Hence, \(\dim Z = 3\) and \(Z\) is rationally connected (as it is the image of the rationally connected variety \(X_k\)). Moreover, letting \(\Gamma_k\) denote the pushforward of \(\Gamma'\), the general fiber of \((X_k,\Gamma_k)\to Z\) is isomorphic to \((\mathbb P^1,\{0\}+\{\infty\})\). So we have an exact sequence \[\xymatrix{1 \ar[r] & G_F \ar[r] & A_8 \ar[r] & G_Z \ar[r] & 1}\] where \(G_F\) acts on the log general fiber \((\mathbb P^1,\{0\}+\{\infty\})\) and \(G_Z\) acts on \(Z\). As \(A_8\) is simple, we have that \(G_Z\) is either trivial or \(A_8\). The latter case does not happen by \cite{BCDP-23}. In the former case, we must have that \(G_F\cong A_8\); however, \(\Aut(\mathbb P^1,\{0\}+\{\infty\})\) is an extension of \(\mathbb G_m\) and \(\mathbb Z/2\), which does not admit an embedding of \(A_8\). Thus, we obtain a contradiction.

Now \(\mathcal D(X,\Gamma)\) is a single point. Let \(\pi\colon (X',\Gamma')\to(X,\Gamma)\) be an $S_8$-equivariant dlt modification. The divisor \(E \coloneqq \lfloor\Gamma'\rfloor\) is fixed by \(S_8\). We proceed in two cases, depending on whether or not \(E=\Gamma'\).

If \(E\neq \Gamma'\), write \(\Gamma'=E+F\) with \(F>0\), and run an \(S_8\)-equivariant \((K_{X'}+\Gamma'-E)\)-MMP. Call the steps \[\xymatrix{ X' \ar@{-->}[r] & X_1 \ar@{-->}[r] & X_2 \ar@{-->}[r] & \cdots \ar@{-->}[r] & X_k \ar[d]^-{\pi} \\  & & & & Z }\]
and denote by \(E_i,\Gamma_i, F_i\) the pushforwards.
Here, $\pi\colon X_k\rightarrow Z$ is the equivariant Mori fiber space. 
If $\dim Z=1$, then we get a contradiction by analyzing the action
on the general fiber, which is a Fano $3$-fold, and the base, which is a rational curve.
Thus, we assume that $\dim Z \geq 2$ or $\dim Z=0$.
In this case, since $E_k$ is ample over $Z$, we conclude that $E_k$ intersects every irreducible divisor on $X_k$.
Indeed, if the irreducible divisor is vertical over $Z$, then $E_k$ 
intersects positively every curve contained in such divisor.
On the other hand, if the irreducible divisor is horizontal over $Z$, 
then it intersects $E_k$ on the general fiber.
However, note that every divisor that is contracted by this MMP
must intersect the strict transform of $E$ positively.
Indeed, every curve that is contracted on this MMP is $(-E)$-negative.
Thus,
if the strict transform of $F$ on $X_k$ is trivial, then
$F_j$ and $E_j$ intersect for some $j<k$.
On the other hand, if the strict transform of $F$ on $X_k$ is non-trivial,
then $F_k$ and $E_k$ intersect.
Thus, for some \(1\leq j \leq k\), we have that \(E_j\) intersects \(F_j\).  Let \((E_j,\Gamma_{E_j})\) be the pair obtained by adjunction of \((X_j,\Gamma_j)\) to \(E_j\). As \(F_j\cap E_j\neq\emptyset\), we have that \(\Gamma_{E_j}\neq 0\). The kernel of the action of \(S_8\) on \(E_j\) is normal and cyclic by \Cref{lem:normal+cyclic}, hence trivial. So \(S_8\) acts faithfully on the 3-dimensional log Calabi--Yau pair \((E_j,\Gamma_{E_j})\) with \(\Gamma_{E_j}\neq 0\). This contradicts~\Cref{lem:A8-CY-3fold}.

It remains to show the case \(\Gamma'=E\). Note that \(\Gamma \neq 0\). If the dlt modification \(\pi\) is non-trivial, then \(\Gamma'=E+\pi_*^{-1}\Gamma\), which is a contradiction. So \(\pi\) is trivial, and we have \(\Gamma=E\) and the pair \((X,\Gamma)\) is dlt. In particular, \((X,\Gamma)\) is plt and, by construction, \(N(K_X+\Gamma)\sim 0\). If \(a_F(X)<\frac{1}{N}\) for some \(F\neq\Gamma\) over \(X\), then \(a_F(X,\Gamma)=0\), which contradicts that \((X,\Gamma)\) is plt. We conclude that \(X\) is \(\frac{1}{N}\)-lc and Fano, and hence bounded by~\cite[Theorem 1.1]{Bir21}.
Then, the boundedness of the pair $(X,B)$ follows as in the first step.\\

\textit{Case 3}: In this case we assume that the coregularity of the pair \((Y,B_Y)\) is \(\leq 2\).

We will show that this case does not happen. In this case, we know there exists \(\Gamma_Y > B_Y \geq 0\) with \(K_Y+\Gamma_Y\sim_{\mathbb Q} 0\) and \(3\geq\dim\mathcal D(Y,\Gamma_Y)\geq 1\). Let \((X,\Gamma)\) be the log pullback of \((Y,\Gamma_Y)\) to \(X\).
We are in the setting of~\Cref{lem:dim-dc-quot}, so we have that \(\dim\mathcal D(X,\Gamma)\in\{1,2,3\}\) and \(K_X+\Gamma\sim_{\mathbb Q} 0\).

Let \((X',\Gamma')\to(X,\Gamma)\) be an \(S_8\)-equivariant dlt modification. The profinite completion \(\hat{\pi}_1(\mathcal D(X',\Gamma'))\) corresponds to a quasi-\'etale cover \((Z',\Gamma_{Z'})\to (X',\Gamma')\) such that \(\mathcal D(Z',\Gamma_{Z'})\) is PL-homeomorphic to either a sphere \(\mathbb S^k\) or a disk \(\mathbb D^k\) with \(k\leq 3\) \cite[Theorem 2 and Paragraph 33]{KX16}.
Since \((Z',\Gamma_{Z'})\to (X',\Gamma')\) is associated to the universal cover of \(\mathcal D(X',\Gamma')\), there is a finite group \(G\) surjecting onto \(S_8\) and acting on \(\mathcal D(Z',\Gamma_{Z'})\). If \(\mathcal D(Z',\Gamma_{Z'})\) is a disk, then \(G\) acts on \(\partial\mathcal D(Z',\Gamma_{Z'}) \cong_{\mathrm{PL}}\mathbb S^{k-1}\) with \(k\leq 3\). 
So in either case, \(G\) acts on a triangulation of a sphere $\mathbb{S}^k$ with $k\leq 3$.
In particular $G$ acts continuously on $\mathbb{S}^k$ with $k\leq 3$.
By~\cite[Theorem 1.1]{Par21}, there is a smooth faithful action of $G$ on $\mathbb{S}^k$ with $k\leq 3$.
Every finite smooth action on a sphere of dimension at most $3$ is conjugate to an orthogonal action (see, e.g., \cite[page 1]{zim18}).
 Hence, we have a homomorphism 
 \[
 \xymatrix{G \ar[r] & {\on O}(k)}
 \] 
 with \(k\leq 4\).
Let \(H\) denote the kernel. 
By~\Cref{lem:finite-subgroup-SO}, 
we conclude that $H$ surjects onto $A_8$.
So \(H\) acts trivially on either \(\mathcal D(Z',\Gamma_{Z'})\) or its boundary, so in particular the \(H\)-action on \(\mathcal D(Z',\Gamma_{Z'})\) has a fixed vertex \(v\in \mathcal D(Z',\Gamma_{Z'})\). Let \(E_v\) be the corresponding divisor on \(\lfloor\Gamma_{Z'}\rfloor\). Then \(E_v\) is fixed by every element of \(H\). By \Cref{lem:normal+cyclic}, the subgroup of \(H\) that fixes \(E_v\) pointwise is normal and cyclic and hence trivial. Hence \(H\) acts faithfully on \(E_v\). Let \((E_v,\Gamma_v)\) be the pair obtained by adjunction of \((Z',\Gamma_{Z'})\) to \(E_v\). Since \(\dim\mathcal D(Z',\Gamma_{Z'})\geq 1\), we have \(\Gamma_v\neq 0\). So \(H\) acts faithfully on a log Calabi--Yau threefold \((E_v,\Gamma_v)\) with \(\Gamma_v\neq 0\). This contradicts~\Cref{lem:A8-CY-3fold}.
\end{proof}

Now, we turn to give a proof of the boundedness up to degeneration
of $5$-dimensional $S_8$-equivariant klt singularities.
The global-to-local argument used in the proof of \Cref{introthm:S8-5-dim-klt}
is very similar to that of \Cref{introthm:quadratic-bound-klt}.

\begin{proof}[Proof of \Cref{introthm:S8-5-dim-klt}]
Let $\mathcal{K}_{5,8,\epsilon}$ be the class of $5$-dimensional $S_8$-equivariant klt 
singularities $(X;x)$ with $\mld(X;x)>\epsilon$.
We show that the class $\mathcal{K}_{5,8,\epsilon}$ is bounded up to degeneration.

Let $(X;x)$ be an element of $\mathcal{K}_{5,8,\epsilon}$.
Let $\pi\colon (X;x) \to (Y;y)$ be the quotient of $(X;x)$ by the $S_8$-action.
Then, there is a boundary $B_Y$ with standard coefficients
for which $(Y,B_Y;y)$ is klt and 
$\pi^*(K_Y+B_Y)=K_X$.
By~\cite[Lemma 1]{Xu14}, there exists a blow-up $\varphi_Y\colon Y'\rightarrow Y$ 
that extracts a unique prime divisor $E'$ that maps to $y\in Y$ and satisfies the following:
\begin{itemize}
    \item the pair $(Y',E'+{\varphi_Y}^{-1}_*B_Y)$ has plt singularities, and 
    \item the divisor $-(K_{Y'}+E'+{\varphi_Y}^{-1}_*B_Y)$ is ample over $Y$.
\end{itemize}
Let $X'\rightarrow X$ be the projective birational morphism obtained by fiber product.
Then $X'$ admits a $S_8$-action and its quotient is $Y'$.
Let $\pi'\colon X'\rightarrow Y'$ be the corresponding quotient map.
Let $K_{X'}+F = {\pi'}^*(K_{Y'}+E'+{\varphi_Y}^{-1}_*B_Y)$.
Since $(X',F)$ is the finite pull-back of a plt pair, we conclude that it is itself plt.
By connectedness of log canonical centers, we conclude that $F$ is prime. 
Thus, $F$ is the unique prime divisor that maps to $x\in X$.
Note that $-(K_{X'}+F)$ is ample over $X$. 
On the other hand, the pair $(X',F)$ admits a faithful $S_8$-action.
By~\Cref{lem:normal+cyclic}, we conclude that $F$ admits a faithful $S_8$-action.
Let $(F,B_F)$ be the log pair obtained by adjunction of $(X',F)$ to $F$.
By construction, the following conditions hold:
\begin{itemize} 
\item $F$ is $4$-dimensional, 
\item $(F,B_F)$ is klt,
\item $-(K_F+B_F)$ is ample, and 
\item $B_F$ has standard coefficients.
\end{itemize} 
By \Cref{thm:S8-4-log-pair-bounded}, we conclude that $(F,B_F)$ belongs
to the log bounded class $\mathcal{F}_{4,8}$.
Then, the class $\mathcal{K}_{5,8,\epsilon}$ is bounded up to degeneration by~\cite[Theorem 1.1]{HLM20}.
We give more details for the benefit of the reader:
we may degenerate the singularity $(X;x)$ to the orbifold cone of $F$ with respect to the $\qq$-polarization $-F|_F$. 
The degree of this $\qq$-polarization is bounded above if the mld of $(X;x)$ is bounded below. 
The central fiber of this degeneration is a cone singularity that belongs to a bounded family by~\cite[Theorem 1]{Mor18c}.
\end{proof}

We finish this section by proving a {\em birational} boundedness statement
for maximally symmetric Fano varietes.
The following theorem states that maximally symmetric Fano varieties are birationally bounded provided some hypothesis that is supported by \Cref{thm:max_symmetric_action}.
Observe that birationally boundedness is much weaker than boundedness.
For example, there are countably many toric Fano varieties of dimension $n$ for $n\geq 2$;
however, the class of toric Fano varieties of dimension $n$ is birationally bounded, 
as each of these varieties is birational to $\pp^n$.

\begin{theorem}
\label{thm:birat_boundedness}
Let $m(n)$ be the maximum integer
for which $S_{m(n)}$ acts faithfully on a $n$-dimensional Fano variety.
Let $\ell(d)$ be the maximum integer 
for which $A_{\ell(d)}$ acts faithfully on a $d$-dimensional Fano variety.
Assume that $m(n)>\ell(d)$ for every $d\leq n-1$.
Then, the class of maximally symmetric
$n$-dimensional Fano varieties 
is birationally bounded.
\end{theorem}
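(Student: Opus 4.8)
The plan is to run an $S_k$-equivariant Minimal Model Program and exploit the numerical hypothesis to force the program to terminate with a Fano variety of equivariant Picard number one, and then invoke boundedness of Fano varieties. Write $k = m(n)$ and let $X$ be a maximally symmetric Fano $n$-fold, so that $S_k \leqslant \Aut(X)$ faithfully. After an $S_k$-equivariant $\qq$-factorialization, and since $-K_X$ is not pseudo-effective, I would run an $S_k$-equivariant MMP; it terminates with an $S_k$-equivariant Mori fiber space $\varphi\colon X' \to Z$ with $X'$ birational to $X$. The first goal is to prove $\dim Z = 0$.

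The key input is the following claim: under the hypothesis, $A_k$ does not act faithfully, even birationally, on any rationally connected variety of dimension $\leq n-1$. I would prove this by induction on $d \leq n-1$. Given a faithful birational $A_k$-action on a rationally connected $d$-fold $V$, pass to an equivariant resolution and run an $A_k$-equivariant MMP to a Mori fiber space $V' \to W$. If $W$ is a point, then $V'$ is a Fano $d$-fold with faithful $A_k$-action, forcing $\ell(d) \geq k = m(n)$ and contradicting $m(n) > \ell(d)$. If $\dim W = e \geq 1$, the fibration gives $1 \to G_F \to A_k \to G_W \to 1$; since $A_k$ is simple ($k$ is large), either $A_k = G_F$ acts faithfully on the Fano general fiber of dimension $d-e \leq n-1$, again contradicting the hypothesis, or $G_F = 1$ and $A_k \hookrightarrow \Bir(W)$ acts on the rationally connected base of dimension $e < d$, contradicting the inductive hypothesis.

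Granting the claim, return to $\varphi\colon X'\to Z$ and suppose $\dim Z \geq 1$. The fibration yields $1 \to G_F \to S_k \to G_Z \to 1$, where $G_F$ acts faithfully on the general fiber (an element acting trivially there would act trivially on a dense open subset, hence on $X'$). The normal subgroups of $S_k$ are $1, A_k, S_k$, so either $A_k \leqslant G_F$, in which case $A_k$ acts faithfully on the Fano general fiber of dimension $< n$, contradicting the hypothesis via $\ell$, or $G_F = 1$ and $A_k \hookrightarrow G_Z$ acts faithfully on the rationally connected base $Z$ of dimension $< n$, contradicting the claim. Hence $\dim Z = 0$, so $X'$ is a Fano $n$-fold with $\rho^{S_k}(X') = 1$ carrying a faithful $S_k$-action and birational to $X$. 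Thus the hypothesis $m(n) > \ell(d)$ is exactly what rules out every genuine fibration structure.

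It remains to bound the resulting varieties $X'$ birationally, and this is the main obstacle. The natural route is to bound the anticanonical volume $(-K_{X'})^n$ and apply boundedness of Fano varieties \cite{Bir21}, passing to the quotient pair $(Y',B_{Y'}) = (X'/S_k, B_{Y'})$, a klt log Fano with $\rho(Y')=1$ and coefficients in the fixed DCC set of standard coefficients, so that $(-K_{X'})^n = (k!)\,\bigl(-(K_{Y'}+B_{Y'})\bigr)^n$. The difficulty is that the symmetric action alone does not control the singularities of $X'$: the global-to-local extraction of \Cref{introthm:quadratic-bound-klt} produces a divisor carrying the full $S_k$-action only when the worst singular locus is $S_k$-fixed, which need not occur (the standard action on $\pp^n$ has no fixed point). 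This is precisely why one expects only birational, and not biregular, boundedness, in analogy with the toric case where every member is birational to $\pp^n$; establishing the requisite volume bound, or directly producing a bounded birational model, is the crux of the argument.
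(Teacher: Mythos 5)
Your reduction to a Mori fiber space and the argument that $\dim Z = 0$ match the paper's proof almost exactly: the paper also runs an equivariant MMP from an equivariant resolution, and when $\dim Z \geq 1$ it pushes the $A_m$-action onto the rationally connected base, resolves, runs the MMP again, and induces downward until the hypothesis $m(n) > \ell(d)$ is violated. That part of your proposal is correct and complete.

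The genuine gap is in your final step, where you declare the boundedness of the output varieties to be an unresolved "crux" and search for a volume bound via the quotient. You have overlooked that the singularities of the MMP output \emph{are} controlled---not by the symmetric action, but by where the MMP starts. The paper begins the MMP not from an equivariant $\qq$-factorialization of $X$ (which, as you correctly observe, would leave the klt singularities of $X$ uncontrolled), but from a smooth $S_k$-equivariant resolution $X' \rightarrow X$. Every step of a $K$-MMP preserves terminal singularities, so the end product $X'_k$ of the program is a \emph{terminal} Fano $n$-fold. Terminal varieties are $\epsilon$-lc with $\epsilon = 1$, so the class of terminal Fano $n$-folds is bounded by \cite[Theorem 1.1]{Bir21}, with no need to bound $(-K_{X'_k})^n$ by hand or to pass to the quotient pair. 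Since each maximally symmetric $X$ is birational to such an $X'_k$, birational boundedness follows immediately. In short: replace your $\qq$-factorialization by a full equivariant resolution before running the MMP, and the "main obstacle" you identify disappears---the analogy you draw with \Cref{introthm:quadratic-bound-klt} and the fixed-point issue on $\pp^n$ is a red herring, because no global-to-local extraction is needed at this step.
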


\begin{proof}
Let $X$ be a maximally symmetric $n$-dimensional Fano variety. 
Let $S_m$ be the symmetric group acting on $X$.
Let $X'\rightarrow X$ be an equivariant resolution of singularities.
The Fano variety $X$ is rationally connected, so 
$X'$ is rationally connected.
We run an $S_m$-equivariant minimal model program
\[\xymatrix{
X' \ar@{-->}[r] & X_1' \ar@{-->}[r] & X_2' \ar@{-->}[r] &\dots \ar@{-->}[r] & X'_k}
\]
for $K_{X'}$. 
Since $X'$ is rationally connected and smooth, then $K_{X'}$ is not pseudo-effective, so we have an equivariant Mori fiber space
$X'_k\rightarrow Z$.
To show the result, it suffices to show that $\dim Z=0$, since then $X'_k$ is a terminal $n$-dimensional Fano variety of Picard rank one, so it belongs to a bounded family by~\cite[Theorem 1.1]{Bir21}.

To show that \(\dim Z=0\), assume by contradiction that $\dim Z \geq 1$.
By the assumption $m(n)>\ell(d)$, we conclude that $A_m$ does not act on the general fiber of $X'_k\rightarrow Z$ so 
it must act on $Z$.
Note that $Z$ is rationally connected, 
being the image of a rationally connected variety.
We take an $A_m$-equivariant resolution of singularities $Z'\rightarrow Z$. 
The variety $Z'$ is rationally connected and smooth, so $K_{Z'}$ is not pseudo-effective.
We run an $A_m$-equivariant MMP for $K_{Z'}$.
Proceeding inductively, we obtain a $d$-dimensional Fano variety 
that admits a $A_m$-action.
This contradicts the fact that $m(n)>\ell(d)$ for $d\leq n-1$. 
\end{proof}

\section{Examples and questions}
\label{sec:ex}

In this section, we consider several examples and questions related to the results of the article.

\begin{example}
\label{ex:optimal_example}
{\em 
Given a dimension $n$, let \(m \coloneqq c_{\mathrm{Fano}}(n) - n - 1 = \left\lceil \frac{1 + \sqrt{8n+9}}{2} \right\rceil - 1\).
Let $X$ be the following complete intersection in $\pp^{n+m}$:
\[
X \coloneqq \left\{\sum_{i=0}^{n+m} x_i = \sum_{i=0}^{n+m} x_i^2 = \cdots = \sum_{i=0}^{n+m} x_i^m = 0 \right\} \subset \pp^{n+m}.
\]
Then $X$ is a smooth Fano complete intersection of dimension $n$ by \Cref{lem:Fermat_ci_smooth}.  The symmetric group $S_{n+m+1}$ acts on $X$ by permutation of the variables and it's clear that this action is faithful.

This example is a maximally symmetric Fano weighted complete intersection for each dimension $n$ by \Cref{thm:max_symmetric_action} and has the largest possible index among such maximal examples by \Cref{thm:maximally_symmetric_largest_index}. We expect this to be a maximally symmetric Fano variety in every dimension.}

\end{example}

\begin{remark}
\label{rem:CY_sharp}
    \Cref{ex:optimal_example} gives examples in any dimension \(n\) showing that the bound for Fanos in \Cref{thm:max_symmetric_action} is sharp. One may ask whether the same can be done for Calabi--Yau complete intersections for \(n\geq 3\) (recall that by \Cref{prop:low_dim_wci} the bound \(c_{\mathrm{CY}}(n)\) is sharp for \(n=2\) but not for \(n=1\)). One obstacle to finding Calabi--Yau examples is that \Cref{lem:Fermat_ci_smooth} no longer holds if the degrees are not \((1,2,\ldots,m)\).

    For \(n=3\), the \((1,2,4)\)-Fermat complete intersection in \(\pp^6\) is smooth and therefore is a maximally symmetric Calabi--Yau weighted complete intersection.
    However, for \(n=4\), the degree \((1,2,5)\)-Fermat complete intersection in \(\pp^7\) is singular (and it even has non-isolated singularities). For \(n=4\), it turns out that the degree \((1,3,4)\)-Fermat complete intersection is smooth and thus exhibits a smooth maximally symmetric example, i.e. it achieves \(c_{\mathrm{CY}}(4)=8\). In general, the numerics to ensure smoothness seem complicated.

    Nevertheless, the upper bound \(c_{\mathrm{CY}}(n)\) is achieved for infinitely many values of $n$, namely whenever there happens to exist an $m$ such that the complete intersection with degrees $(1,2,\ldots,m)$ in $\mathbb{P}^{n+m}$ is Calabi--Yau.
\end{remark}

\begin{question}
\label{quest:CY_sharp}
    Is the bound in \Cref{thm:max_symmetric_action} for quasismooth Calabi--Yau weighted complete intersections sharp for all $n \geq 2$?
\end{question}

\begin{example}\label{exmp:index-1-Fano-Fermat}
{\em In any dimension \(n \), there exist maximally symmetric Fano--Fermat complete intersections of index \(1\). Indeed, this happens if \(X\) has degrees \((d_1,\ldots,d_m)\) with \(m=c_{\mathrm{Fano}}(n) - n - 1\) and \(n+m+1-\sum_{i=1}^m d_i=1\).
(For a concrete example, take \(d_i=i\) for all \(1\leq i\leq m-1\) and \(d_m=n+m-\frac{(m-1)m}{2}\).) However, as with \Cref{rem:CY_sharp}, the numerics to ensure smoothness seem complicated. E.g. for \(n=5\), the degree \((1,3,4)\)-Fermat--Fano in \(\pp^8\) is smooth, but the degree \((1,2,5)\)-Fermat--Fano is singular.

If \(X\) is smooth and if \(n\geq 210\), then \(X\) is birationally superrigid and in particular irrational by \cite[Theorem 1.2]{Zhuang-BSR-index-1}. Moreover, for \(n\geq 4\) any smooth such \(X\) is conjecturally birationally rigid and hence irrational \cite[Conjecture 5.1]{Pukhlikov-bir-rigid}.}
\end{example}

For \(n=3\), the \(X\) in \Cref{exmp:index-1-Fano-Fermat} is known as the symmetric sextic Fano threefold; it is a smooth Fano threefold with an intermediate Jacobian obstruction to rationality \cite{Beauville-symmetric-sextic}. (Moreover, any embedding of \(S_7\) into the birational automorphism group of a rationally connected threefold is conjugate to this action \cite[Proposition 1.1.(ii)]{Prokhorov_space_Cremona}.) For general \(n\), however, it is not clear how to guarantee smoothness in~\Cref{exmp:index-1-Fano-Fermat}.

\begin{question}
    Do there exist index \(1\) Fano--Fermat complete intersections as in~\Cref{exmp:index-1-Fano-Fermat} that are smooth for all \(n\)? In particular, do there exist irrational examples of maximally symmetric Fano--Fermat varieties for \(n\gg 0\)?
\end{question}

For rational varieties, \(S_{n+1}\) always acts on \(\mathbb P^n\) by permutation of coordinates, so we have an embedding \(S_{n+1}\leqslant\PGL_{n+1}(\C)\leqslant\Cr(n)\). In fact, one can get \(S_{n+2}\leqslant\PGL_{n+1}(\C)\):

\begin{example}\label{ex:symm-projective-rep}
{\em  
    For \(n\geq 1\), the projective representation \(S_{n+2}\to\PGL_{n+1}(\C)\) of degree \(n+1\) defines a faithful action of \(S_{n+2}\) on \(\pp^n\). \Cref{thm:toric} shows that this is the best one can do among toric varieties (apart from the $n = 2$ case).
}
\end{example}

There are also easy examples of rational \(n\)-dimensional Fanos with \(S_{n+3}\)-actions.

\begin{example}\label{ex:rat-n+3}
{\em 
    Let \(n\geq 1\) and define \[X \coloneqq \left\{\sum_{i=0}^{n+2} x_i = \sum_{i=0}^{n+2} x_i^2=0\right\} \subset \mathbb P^{n+2}.\] \(X\) is smooth by \Cref{lem:Fermat_ci_smooth} and it is isomorphic to a quadric; thus, it is a smooth rational \(n\)-dimensional Fano with a faithful \(S_{n+3}\)-action.
    }
\end{example}

Not all maximally symmetric Fano weighted complete intersections are complete intersections in projective space; that is, non-trivial finite covers can arise in \Cref{introthm:maximally-symmetric-Fano-WCI}.

\begin{example} 
{\em 
\label{ex:one_extra_weight}
Let $X \subset \pp^9(1^{(9)},2)$ be the following smooth weighted complete intersection, where the variables of the weighted projective space are $x_0, \ldots, x_8, y$:
$$X \coloneqq \left\{ \sum_{i = 0}^8 x_i = \sum_{i=0}^8 x_i^2 = \sum_{i=0}^8 x_i^3 = y^2 - \sum_{i=0}^8 x_i^4 = 0 \right\}.$$
Then $X$ is a Fano fivefold since $K_X \cong \mathcal{O}_X(-1)$ and it carries a faithful $S_9$-action by permutation of the $x_i$.  By \Cref{thm:max_symmetric_action}, $X$ is a maximally symmetric weighted complete intersection of dimension $5$.  It is a double cover of the Fermat $(1,2,3)$-complete intersection in $\pp^8$, which is the highest index maximally symmetric example by \Cref{thm:maximally_symmetric_largest_index}.
}
\end{example}

In \Cref{introthm:S8-4-fold-bounded}, we showed that the class of $S_8$-equivariant klt Fano $4$-folds is bounded. In fact, we are only aware of the following members in this class:
\begin{example}\label{ex:max-sym-Fano-4-fold}
{\em (Examples of Fano $4$-folds with $S_8$-actions.) Define following Fano--Fermat complete intersections in \(\pp^7\):
\begin{itemize}
    \item \(X_{123}\) of degrees \((1,2,3)\), and
    \item \(X_{124}\) of degrees \((1,2,4)\).
\end{itemize}
Then \(X_{123}\) and \(X_{124}\) are smooth \(S_8\)-equivariant Fano \(4\)-folds with \(\rho=1\). 
}
\end{example}

In contrast, the class of \(S_7\)-equivariant klt Fano \(4\)-folds is \textit{unbounded}:

\begin{example}\label{ex:S_7-Fano-4-fold}
{\em 
Let $X$ be the symmetric sextic Fano $3$-fold (\Cref{exmp:index-1-Fano-Fermat} for \(n=3\)). Then \(X\) is a smooth Fano variety of Picard rank one and admits faithful \(S_7\)-action.
The divisor class $-K_X$ is invariant under the action of $S_7$, and the divisor \(D \in |-4 K_X|\) defined by \( \sum_{i=0}^6 x_i^4\) is invariant under the \(S_7\)-action. For \(m\gg 0\), define
\[
Y_m \coloneqq \mathbb{P}(\mathcal{O}_X\oplus \mathcal{O}_X(-mD)) \xrightarrow{\pi_m} X.
\]
Note that $Y_m$ is endowed with a $S_7$-action.
Let $\mathcal{O}_{Y_m}(1)$ be the associated relative ample bundle. 
Let $E_m$ be the section corresponding to the exact sequence 
\[
1\rightarrow \mathcal{O}_X \rightarrow \mathcal{O}_X \oplus \mathcal{O}_X(-mD) \rightarrow \mathcal{O}_X(-mD) \rightarrow 1
\]
and let $F_m$ be the section corresponding the exact sequence 
\[
1\rightarrow \mathcal{O}_X(-mD) \rightarrow 
\mathcal{O}_X \oplus \mathcal{O}_X(-mD) 
\rightarrow \mathcal{O}_X \rightarrow 1. 
\]
The cokernel of $\pi_m^*(\mathcal{O}_X)\rightarrow \mathcal{O}_{Y_m}(1)$ is $\mathcal{O}_{E_m}(1)$, so the normal bundle of $E_m$ is $\mathcal{O}_{E_m}^\vee \otimes \mathcal{O}_{E_m}(-m\pi_m^*D)\simeq \mathcal{O}_{E_m}(4mK_{E_m})$. Analogously, the normal bundle of $F_m$ is $\mathcal{O}_{F_m}(-4mK_{F_m})$.
Note that 
\begin{equation}\label{eq:lin-equiv}
K_{Y_m}+E_m+F_m \sim \pi_m^*(K_X)
\end{equation}
and the divisor $\pi_m^*(-K_X)$ is nef.
We claim that $(Y_m,E_m)$ is log Fano.
Indeed, if $C$ is not contained in $F_m$,
then $(K_{Y_m}+E_m)\cdot C<0$ by the linear equivalence~\eqref{eq:lin-equiv}.
On the other hand, if $C$ is contained in $F_m$, then $-(K_{Y_m}+E_m)\cdot C =
F_m \cdot C + \pi^*_m(-K_X)\cdot C > 0$
by the normal bundle computation.
Therefore, for $\epsilon>0$ small enough, the pair $(Y_m,(1-\epsilon)E_m)$ is a klt Fano pair.
We conclude that $Y_m$ is a Mori dream space by~\cite[Corollary 1.3.2]{BCHM10}.
Thus, we may run an $S_7$-equivariant MMP for any $S_7$-invariant divisor on $Y_m$.
By the normal bundle computation, $E_m$ is covered by $E_m$-negative curves.
Since $E_m$ has Picard rank $1$, the $S_7$-equivariant $E_m$-MMP has a single step and contracts $E_m$ to a point. 
Let $\varphi_m\colon Y_m\rightarrow X_m$ be the $S_7$-equivariant contraction of $E_m$ to a point.
We obtain a variety $X_m$ of Picard rank one.
Note that $X_m$ is endowed with the action of $S_7$.
Next, we compute the mld of $X_m$.
To do this, let $C\subset E_m$ be a curve.
Then, the following sequence of equalities hold:
\[
\left(K_{Y_m}+E_m -\frac{1}{4m}E_m \right)\cdot C=
K_{E_m}\cdot C - \frac{1}{4m} E_m|_{E_m}\cdot C = K_{E_m}\cdot C - \frac{1}{4m} 4mK_{E_m}\cdot C=0.
\]
By the contraction theorem, we have that:
\[
\varphi_m^*(K_{X_m})=K_{Y_m}+\left(1-\frac{1}{4m}\right)E_m.
\]
We conclude that $X_m$ is an $S_7$-equivariant klt Fano variety with ${\rm mld}(X_m)=\frac{1}{4m}$. Indeed, the pair $(Y_m,E_m)$ is a log resolution of $X_m$. 
The minimal log discrepancies 
of a bounded set of projective varieties
can only take finitely many values.
We conclude that the varieties $X_m$ form
a sequence of unbounded
$S_7$-equivariant klt Fano $4$-folds.
}
\end{example}

In any dimension \(n\), the construction in \Cref{ex:S_7-Fano-4-fold} shows that, given a smooth \(S_k\)-equivariant \((n-1)\)-dimensional Fano variety of Picard rank one, one can construct an unbounded family of \(n\)-dimensional \(S_k\)-equivariant klt Fano varieties.

In the proof of \Cref{introthm:S8-4-fold-bounded}, to prove that $S_8$-equivariant Fano $4$-folds form a bounded family,
we use the following facts:
\begin{enumerate}
\item\label{item:boundedness-fact-1} the group $S_8$ does not act on Fano varieties of dimension at most $3$
nor Calabi--Yau varieties of dimension at most $2$ (see, e.g.,~\cite{BCDP-23}), 
\item\label{item:boundedness-fact-2} the group $S_8$ does not act smoothly on spheres of dimension at most $3$ (see~\cite{zim18}),
\item\label{item:boundedness-fact-3} the dual complex $\mathcal{D}(X,\Gamma)$ of a log Calabi--Yau pair of dimension at most $4$ is a quotient of a sphere of dimension at most $3$ (see~\cite[Proposition 5]{KX16}), and
\item\label{item:boundedness-fact-4} the boundedness of Fano $4$-folds with log discrepancies bounded away from zero (see~\cite[Theorem 1.1]{Bir21}).
\end{enumerate}
The statement~\eqref{item:boundedness-fact-3} is expected to hold in any dimension (see~\cite[Question 4]{KX16}).
On the other hand, if $m(n)$ is the largest integer for which 
$S_m$ acts faithfully on a $n$-dimensional Fano variety, 
then we expect that $S_m$ does not act on either a $\ell$-dimensional Fano variety with $\ell \leq n-1$.
Similarly, we expect that $S_m$ does not act on a $\ell$-dimensional Calabi--Yau variety with $\ell \leq n-1$.
Thus, we expect~\eqref{item:boundedness-fact-1} and~\eqref{item:boundedness-fact-2} to have analogous statements in higher dimensions.
Finally, \eqref{item:boundedness-fact-4} is known to hold in any dimension. 
This leads us to the following question:

\begin{question}\label{quest:max-symm-bound}
For \(n\geq 4\),
is the family of maximally symmetric $n$-dimensional Fano varieties bounded? 
\end{question}

Although we do not know whether Fano $4$-folds with $S_8$-actions are maximally symmetric, Theorem~\ref{introthm:S8-4-fold-bounded} implies that Fano $4$-folds endowed with an action of $S_k$, with $k\geq 8$, are bounded. Hence, the previous question has a positive answer in dimension $4$.
We do not have enough evidence for a positive answer of Question~\ref{quest:max-symm-bound} in higher dimensions. 
A better understanding of symmetric actions on Calabi--Yau varieties is needed to tackle this question.
The following question is very related to the boundedness one:

\begin{question}
Are maximally symmetric $n$-dimensional Fano varieties equivariantly exceptional?
I.e., are the quotients exceptional Fano varieties?
\end{question}

This holds in all the examples that we consider (\Cref{ex:max-sym-Fano-4-fold}). However, our tools do not allow to prove this statement in dimension $4$.
It would be interesting to find, if possible, singular examples of $S_8$-equivariant Fano $4$-folds. We do not know the existence of these:

\begin{question}\label{quest:sing-max-sym}
Are there examples of singular maximally symmetric Fano varieties?
What about singular $S_8$-equivariant Fano $4$-folds? 
\end{question}

In a similar vein, all the examples of maximally symmetric Fano varieties that we know are isolated. 
This motivates the following question:

\begin{question}\label{quest:moduli}
Do maximally symmetric Fano varieties have non-trivial moduli?
\end{question}

For \(n \geq 4\), the largest symmetric group that can embed into the Cremona group \(\Cr(n) = \Bir(\pp^n_\C)\) of rank \(n\) is not known.
\Cref{prop:ln-bound} gives a quadratic bound, which we do not expect to be sharp. \Cref{ex:rat-n+3} shows that \(S_{n+3}\) always embeds into \(\Cr(n)\). We also note that, for \(n\geq 3\), the integer \(c_{\mathrm{Fano}}(n)\) defined in \Cref{thm:max_symmetric_action}\eqref{part:max_symmetric_action-Fano} is always strictly greater than \(n+3\).
\begin{question}
    For \(n\geq 4\), is \(S_{n+3}\) the largest symmetric group that admits an embedding into \(\Cr(n)\)?  In particular, are all maximally symmetric Fano varieties irrational for $n \geq 3$?
\end{question}
Finally, we expect that~\Cref{introthm:quadratic-bound-klt}
can be improved by replacing $n^2$ with $n$.
However, this problem seems very challenging.
For instance, one would need to prove the analogous projective statement for Fano varieties.
However, we expect that weighted complete intersection singularities 
should be easier to deal with.
We propose the following question:
\begin{question}\label{quest:klt-WCI}
Let $S_{m(n)}$ be the largest symmetric group acting on an $n$-dimensional weighted complete intersection klt singularity. 
Do we have that $\lim_{n\rightarrow \infty} m(n)/n= 1$?
\end{question}
We expect that ideas similar to those of the proof of~\Cref{introthm:maximally-symmetric-Fano-WCI} lead to a positive answer for~\Cref{quest:klt-WCI}.
However, working in the local setting introduces extra difficulties, such as not having a well-defined degree of the equations that cut out the singularity.

\bibliographystyle{habbvr}
\bibliography{references}

\end{document}